\numberwithin{equation}{section}
\newcommand{\id}{\mbox{Id}}
\newcommand{\1}{{\bf 1}}
\newcommand{\rmk}[1]{#1}
\newcommand{\dif}{\mathrm{d}}
\newcommand{\dd}{\mathrm{d}}
\newcommand{\mathd}{\mathrm{d}}
\newcommand{\supp}{\mathrm{supp}}
\newcommand{\ind}{\mathbf{1}}
\newcommand{\varep}{\varepsilon}
\newcommand{\ep}{\varepsilon}
\newcommand{\la}{\lambda}
\newcommand{\al}{\alpha}
\newcommand{\ka}{\kappa}
\newcommand{\vp}{\varphi}
\newcommand{\om}{\omega}
\renewcommand{\leq}{\leqslant}
\renewcommand{\geq}{\geqslant}
\newcommand{\bfA}{\mathbf{A}}
\newcommand{\mr}{\mathbb R}
\newcommand{\R}{\mathbb R}
\newcommand{\N}{\mathbb N}
\newcommand{\cac}{\mathcal C}
\newcommand{\cl}{\mathcal L}
\newcommand{\cn}{\mathcal N}
\newcommand{\lp}{\left(}
\newcommand{\rp}{\right)}
\DeclareMathOperator{\diver}{div}
\DeclareMathOperator*{\esssup}{esssup}
\newtheorem{theorem}{Theorem}[section]
\newtheorem{corollary}[theorem]{Corollary}
\newtheorem{lemma}[theorem]{Lemma}
\newtheorem{notation}[theorem]{Notation}
\newtheorem{proposition}[theorem]{Proposition}
\theoremstyle{definition}
\newtheorem{definition}[theorem]{Definition}
\newtheorem{hypothesis}[theorem]{Hypothesis}
\theoremstyle{remark}
\newtheorem{remark}[theorem]{Remark}
\theoremstyle{remark}
\newcommand{\cdummy}{\cdot}
\begin{document}

\title[A priori estimates for rough PDE\MakeLowercase{s}]{A priori estimates for rough PDE\MakeLowercase{s}\\ with application to rough conservation laws}

\author{Aur\'elien Deya}
\address[A. Deya]{Institut Elie Cartan, University of Lorraine
B.P. 239, 54506 Vandoeuvre-l\`es-Nancy, Cedex
France}
\email{Aurelien.Deya@univ-lorraine.fr}

\author{Massimiliano Gubinelli}
\address[M. Gubinelli]{Hausdorff Center for Mathematics \& Institute for Applied Mathematics, University of Bonn,
Endenicher Allee 60,
53115 Bonn, Germany}
\email{gubinelli@iam.uni-bonn.de}

\author{Martina Hofmanov\'a}
\address[M. Hofmanov\'a]{Institute of Mathematics,  Technical University Berlin, Stra\ss e des 17. Juni 136, 10623 Berlin, Germany}
\email{hofmanov@math.tu-berlin.de}

\author{Samy Tindel}
\address[S. Tindel]{Department of Mathematics, Purdue University,
150 N. University Street,  West Lafayette, IN 47907, United States}
\email{stindel@purdue.edu}

\thanks{Financial support by the DFG via Research Unit FOR 2402 and via CRC 1060 is gratefully acknowledged.}

\begin{abstract}
We introduce a general \rmk{weak formulation for PDEs driven by rough paths, as well as a new strategy to prove  well-posedness. Our procedure is based on a combination of fundamental a priori estimates with (rough) Gronwall-type arguments.} In particular this approach does not rely on any sort of transformation formula (flow transformation, Feynman--Kac representation formula etc.) and is therefore rather flexible.
As an application, we study  conservation laws driven by rough paths establishing well--posedness for the corresponding kinetic formulation.
\end{abstract}

\subjclass[2010]{60H15, 35R60, 35L65}

\keywords{rough paths, rough PDEs, weak solution, a priori estimates, scalar conservation laws, kinetic formulation, kinetic solution}

\date{\today}

\maketitle

\tableofcontents

\section{Introduction}

Lyons~\cite{MR1654527} introduced rough paths to give a description of solutions to ordinary differential equation (ODEs) driven by external time varying signals which is robust enough to allow very irregular signals like the sample paths of a Brownian motion. His analysis singles out a rough path as the appropriate topological structure on the input signal with respect to which the solution of an ODE varies in a  continuous way. Since its invention, rough path theory (RPT) has been developed very intensively to provide robust analysis of ODEs and a novel way to define solutions of stochastic differential equations driven by non semimartingale signals. For a comprehensive review see the book of Friz and Victoir~\cite{FV} and the lecture notes of Lyons, Caruana and L\'evy~\cite{MR2314753} or the more recents ones of Friz and Hairer~\cite{friz_course_2014}. RPT can be naturally formulated also in infinite-dimension to analyse ODEs in Banach spaces. This generalisation is, however, not appropriate for the understanding of rough PDEs (RPDEs), i.e. PDEs with irregular perturbations. This is due to two basic facts. First, the notion of rough path encodes in a fundamental way only the nonlinear effects of time varying signals, without possibility to add more dimensions to the parameter space where the signal is allowed to vary in an irregular fashion. Second, in infinite dimension the action of a signal (even finite dimensional) can be described by differential (or more generally unbounded) operators. 

Due to these  basic difficulties, attempts to use RPT to study rough PDEs have been limited by two factors: the first one is the need to  look at RPDEs as evolutions in Banach spaces perturbed by one parameter rough signals (in order to keep rough paths as basic objects), the second one is the need to avoid unbounded operators by looking at mild formulations or Feynman--Kac formulas or transforming the equation in order to absorb the rough dependence into better understood objects (e.g. flow of characteristic curves).

These requirements pose strong limitations on the kind of results one is allowed to obtain for RPDEs and the proof strategies are very different from the classical PDE proofs. The most successful approaches to RPDEs do not even allow to characterise the solution directly but only via a transformation to a more standard PDE problem.
The need of a mild formulation of a given problem leads usually to very strong structural requirements like for example semilinearity.
We list here some pointers to the relevant literature:
\begin{itemize}
\item Flow transformations applied to viscosity formulation of fully non-linear RPDEs (including Backward rough differential equations) have been studied in a series of work by Friz and coauthors: Diehl and Friz~\cite{MR2978136},
Friz and Oberhauser~\cite{MR3152786},
Caruana and Friz~\cite{MR2510132},
Diehl, Friz and Oberhauser~\cite{MR3332714},
Caruana, Friz and Oberhauser~\cite{MR2765508} and finally Friz, Gassiat, Lions and Souganidis~\cite{160204746}.

\item Rough formulations of evolution heat equation with multiplicative noise (with varying degree of success) have been considered by Gubinelli and Tindel~\cite{MR2599193}, Deya, Gubinelli and Tindel~\cite{MR2925571}, Teichmann~\cite{MR2836540}, Hu and Nualart~\cite{MR2471936} and Garrido-Atienza, Lu and Schmalfuss~\cite{MR3423246}.

\item Mild formulation of rough Burgers equations with spatially irregular noise have been first introduced by Hairer and Weber~\cite{MR3010394,MR3129808} and Hairer, Maas and Weber~\cite{MR3179667} leading to the groundbreaking work of Hairer on the Kardar--Parisi--Zhang equation~\cite{MR3071506}.

\item Solutions of  conservation laws with rough fluxes have been studied via flow transformation by Friz and Gess~\cite{friz_stochastic_2014} and via the transformed test function approach by Lions, Perthame and Souganidis~\cite{MR3274890, MR3327520, MR3380984}, Gess and Souganidis~\cite{MR3351442,gess_long_time_2014}, Gess, Souganidis and Perthame ~\cite{gess_semi_discretization_2015} and Hofmanov\'{a}~\cite{H16}.

\end{itemize}

Hairer's regularity structure theory~\cite{MR3274562} is a wide generalisation of rough path which allows irregular objects parametrized by multidimensional indices. A more conservative approach, useful in many situations but not as general, is the paracontrolled calculus developed by Gubinelli, Imkeller and Perkowski~\cite{MR3406823,gubinelli_lectures_2015}. These techniques go around the first limitation. In order to apply them however the PDEs need usually to have a mild formulation where the unbounded operators are replaced by better behaved quantities and in general by bounded operators in the basic Banach spaces where the theory is set up. Existence and uniqueness of solutions to RPDEs are then consequences of standard fixed-point theorems in the Banach setting. 

PDE theory developed tools and strategies to study \emph{weak solutions} to PDEs, that is distributional relations satisfied by the unknown together with its weak derivatives. From a conceptual point of view  the wish arises to devise an approach to RPDEs which borrow as much as possible from the variety of tools and techniques of PDE theory. From this point of view various authors started to develop \emph{intrinsic} formulations of RPDEs as which involves relations between certain distributions associated to the unknown and the rough paths associated to the input signal. Let us mention the work of Gubinelli, Tindel and Torrecilla~\cite{gubinelli_controlled_2014} on viscosity solutions to fully non--linear rough PDEs, that of Catellier~\cite{catellier_rough_2015} on rough transport equations (in connection with the regularisation by  noise phenomenon), Diehl, Friz and Stannat~\cite{14126557} and finally of Bailleul and Gubinelli~\cite{BG} on rough transport equations. This last work introduces for the first time apriori estimates for RPDEs, that is estimates which holds for any weak solution of the RPDE (though we should also mention the contribution~\cite{nunu-vuillermot}, in which weak formulations are investigated for Young type equations driven by fractional Brownian motions with Hurst parameter $H>1/2$).
These estimates are crucial to derive control on various norms of the solution and obtain existence and uniqueness results, bypassing the use of the rough flow of characteristics which has been the main tool of many of the previous works on this subject. 

\medskip

\rmk{In the present paper, we continue the development of general tools for RPDEs along the ideas introduced in \cite{BG}. In particular, just as in the latter reference, we will rely on the formalism of \enquote{unbounded rough driver} in order to model the central operators governing the (rough part of the) dynamics in the equation. In fact, through the results of this paper, we propose to extend the considerations of \cite{BG} along several essential directions:}

\smallskip

\noindent
\rmk{$\bullet$ We include the possibility of an unbounded drift term in the model under consideration (see Definition \ref{def:gen}), and generalize the main apriori estimates accordingly (see Theorem \ref{theo:apriori}). This improvement considerably extends the range of possible equations covered by the approach, and we will indeed raise two fundamental examples that could not have been treated in the framework of \cite{BG}: first a heat-equation model with linear transport noise (Section \ref{subsec:first-application}), then a more compelling (and much more thorough) application to scalar conservation laws with rough fluxes, as introduced below.} 

\smallskip

\noindent
\rmk{$\bullet$ We rephrase the theory in the $p$-variation language and thus not restrict to the more specific H{\"o}lder topology used in \cite{BG}. Again, this technical extension, which requires a careful follow-up of the controls involved in the procedure, will prove to be of a paramount importance in the study of our main conservation-law model (see Remark \ref{rk:p-var-topo} for more details).}  

\smallskip

\noindent
\rmk{$\bullet$ We illustrate how to efficiently combine our general a priori estimates with Gronwall-type arguments. Skimming over any book on PDEs indeed shows how fundamental such a combination is for any nontrivial result on weak solutions. Therefore our strategy requires the clear statement of an effective \emph{rough Gronwall lemma} adapted to the $p$-variation setting (see Lemma \ref{lemma:basic-rough-gronwall} below). While Gronwall-like arguments are well known in the rough path literature, they have been essentially employed in the context of rough strong solutions. Here we show how to use them to obtain finer informations about rough weak solutions. This will require new ideas to overcome technical difficulties when working with test functions.}

\smallskip

\noindent
\rmk{$\bullet$ We solve, via the constructions of Section \ref{subsec:smoothing}, an important technical question left open in \cite{BG} about tensorization of the rough equation and the related space of test-functions (see Remark \ref{rk:smoothing-bg} for more details). For the sake of clarity,  we made the whole tensorization argument self-contained with respect to \cite{BG}.}

\medskip

\rmk{Let us now elaborate on what will be the main illustration of the above technical contributions (and what will actually occupy the largest part of the paper), namely the rough extension of the so-called \enquote{conservation laws} equation.}

\smallskip

Conservation laws and related equations have been paid an increasing attention lately and have become a very active area of research, counting nowadays quite a number of results for deterministic and stochastic setting, that is for conservation laws either of the form
\begin{equation}\label{eq:det}
\partial_t u+\diver (A(u))=0 ,
\end{equation}
(see \cite{vov1,car,vov,kruzk,lpt1,lions,perth,tadmor}) or
\begin{equation*}
\dif u+\diver (A(u))\dif t=g(x,u)\dif W ,
\end{equation*}
where the It\^o stochastic forcing is driven by a finite- or infinite-dimensional Wiener process (see \cite{bauzet,karlsen,debus2,debus,DV,feng,bgk,holden,kim,stoica,wittbold}). Degenerate parabolic PDEs were studied in~\cite{car,chen} and in the stochastic setting in~\cite{BVW,degen2,hof}.

Recently, several attempts have already been made to extend rough path techniques to conservation laws as well. First, Lions, Perthame and Souganidis (see \cite{MR3327520,MR3274890}) developed a pathwise approach for
$$\dif u+\diver(A(x,u))\circ \dif W=0 ,$$
where $W$ is a continuous real-valued signal and $\circ$ stands for the Stratonovich product in the Brownian case, then Friz and Gess (see \cite{friz_stochastic_2014}) studied
$$\dif u+\diver f(t,x,u)\dif t=F(t,x,u)\dif t+\Lambda_k(x,u,\nabla u)\dif z^k ,$$
where $\Lambda_k$ is affine linear in $u$ and $\nabla u$ and $z=(z^1,\dots,z^K)$ is a rough driving signal. Gess and Souganidis \cite{MR3351442} considered
\begin{equation}\label{eq1a}
\dif u+\diver (A(x,u))\dif z=0 ,
\end{equation}
where $z=(z^1,\dots,z^M)$ is a geometric $\alpha$-H\" older rough path and in \cite{gess_long_time_2014} they studied the long-time behavior in the case when $z$ is a Brownian motion. Hofmanov\'a \cite{H16} then generalized the method to the case of mixed rough-stochastic model
$$\dif u+\diver (A(x,u))\dif z=g(x,u)\dif W,$$
where $z$ is a geometric $\alpha$-H\"older rough path, $W$ is a Brownian motion and the stochastic integral on the right hand side is understood in the sense of It\^o.

It was observed already a long time ago that, in order to find a suitable concept of solution for problems of the form \eqref{eq:det}, on the one hand classical $C^1$ solutions do not exist in general and, on the other hand, weak or distributional solutions lack uniqueness.
The first claim is a consequence of the fact that any smooth solution has to be constant along characteristic lines, which can intersect in finite time (even in the case of smooth data) and shocks can be produced. The second claim demonstrates the inconvenience that often appears in the study of PDEs and SPDEs: the usual way of weakening the equation leads to the occurrence of nonphysical solutions and therefore additional assumptions need to be imposed in order to select the physically relevant ones and to ensure uniqueness. Hence one needs to find some balance that allows to establish existence of a unique (physically reasonable) solution.

Towards this end,  Kru\v{z}kov \cite{kruzk} introduced the notion of entropy solution to~\eqref{eq:det}, further developed in the stochastic setting in~\cite{bauzet,car,feng,kim,wittbold}.
Here we pursue the kinetic approach, a concept of solution that was first introduced by Lions, Perthame, Tadmor \cite{lions} for deterministic hyperbolic conservation laws and further studied in \cite{vov1}, \cite{chen}, \cite{vov}, \cite{lpt1}, \cite{lions}, \cite{tadmor}, \cite{perth}. This direction also appears in several works on stochastic conservation laws and degenerate parabolic SPDEs, see \cite{degen2}, \cite{debus2}, \cite{debus}, \cite{DV}, \cite{bgk}, \cite{hof} and in the (rough) pathwise works \cite{gess_long_time_2014}, \cite{MR3351442}, \cite{H16}, \cite{MR3327520}, \cite{MR3274890}. 

Kinetic solutions are more general in the sense that they are well defined even in situations when neither the original conservation law nor the corresponding entropy inequalities can be understood in the sense of distributions. Usually this happens due to lack of integrability of the flux and entropy-flux terms, e.g. $A(u)\notin L^1_{\text{loc}}$. Therefore, further assumptions on initial data or the flux function $A$ are in place in order to overcome this issue and remain in the entropy setting. It will be seen later on that no such restrictions are necessary in the kinetic approach as the equation that is actually being solved -- the so-called kinetic formulation, see~\eqref{eq:kinform} -- is in fact linear. In addition, various proofs simplify as methods for linear PDEs are available.

In the present paper, we are concerned with scalar rough conservation laws of the form~\eqref{eq1a}, where $z=(z^1,\dots,z^M)$ can be lifted to a geometric rough path of finite $p$-variation for $p\in [2,3)$. 
We will show how our general tools allow to  treat~\eqref{eq1a}
 along the lines of the standard PDEs proof strategy. Unlike the known results concerning the same problem (see e.g. \cite{MR3351442,MR3327520,MR3274890}), our method does not rely on the flow transformation method and so it overcomes the limitations inevitably connected with such a transformation. Namely, we are able to significantly weaken the assumptions on the flux coefficient $A=(A_{ij})$: we assume that $a_{ij}=\partial_\xi A_{ij}$ and $b_j=\diver_x A_{\cdot j}$ belong to $W^{3,\infty}$, whereas in \cite{MR3351442} the regularity of order $\mathrm{Lip}^{2+\gamma}$ is required for some $\gamma>\frac{1}{\alpha}$ with $\alpha\in(0,1)$ being the H\"older regularity of the driving signal. For a 2-step rough path, i.e. in the range $\alpha\in (\tfrac{1}{3},\frac{1}{2}]$, it therefore means that almost five derivatives might be necessary. Nevertheless, let us point out that even the regularity we require is not the optimal one. To be more precise, with a more refined method we expect that one could possibly only assume  $W^{\gamma,\infty}$-regularity for the coefficients $a,b$ with $\gamma > p$.

\smallskip

\rmk{For the sake of a clearer presentation and in order to convey the key points of our strategy as effectively as possible, we will limit the scope of this paper to the first \enquote{non-trivial} rough situation, that is to $p\in [2,3)$. This being said, we are very confident with the possibility to extend the general pattern of this method to rougher cases, that is to any $p\geq 2$, at the price of a heavier algebraic machinery.}

\subsection*{Outline of the paper} In Section~\ref{sec:general} we fix notations, introduce the notion of unbounded rough driver and establish the main tools used thereafter: a priori estimates for distributional solutions to rough equations and a related rough Gronwall lemma. \rmk{For pedagogical purpose, we then provide a first possible application of these results to a rough heat equation model with transport noise (Section \ref{subsec:first-application}). In Section~\ref{sec:uniq1}, we discuss the theoretical details of the tensorization method needed to prove bounds on nonlinear functions of the solution.} Section~\ref{sec:conservation-presentation} introduces the setting for the analysis of conservations laws with rough fluxes. Section~\ref{sec:uniq} uses the tensorization method to obtain estimates leading to reduction, $L^1$-contraction and finally uniqueness for kinetic solutions. In Section~\ref{sec:apriori} we prove some $L^p$-apriori bounds on solutions which are stable under rough path topology. These bounds are finally used in Section~\ref{sec:existence} to prove existence of kinetic solutions.

\subsection*{Acknowledgements}
The authors would like to thank Dr. Mario Maurelli for some discussions about tensorization and the anonymous referee for his careful reading and the extensive comments which helped them to substantially improve the presentation of the results.

\section{General a priori estimates for rough PDEs}
\label{sec:general}

\subsection{Notation}

First of all, let us recall the definition of the increment operator, denoted by $\delta$. If $g$ is a path defined on $[0,T]$ and $s,t\in[0,T]$ then $\delta g_{st}:= g_t - g_s$, if $g$ is a $2$-index map defined on $[0,T]^2$ then $\delta g_{sut}:=g_{st}-g_{su}-g_{ut}$. The norm of the element $g$, \rmk{considered as an element of a Banach space $E$, will be written indistinctly as:
\begin{equation}\label{eq:notation-norms}
\|g\|_{E},
\quad\text{or}\quad
\cn[g; E] .
\end{equation}
 } 
 For two quantities $a$ and $b$ the relation $a\lesssim_{x} b$ means $a\leq c_{x} b$, for a constant $c_{x}$ depending on a multidimensional parameter $x$.

In the sequel, given an interval $I$ we call \emph{control on $I$} (and denote it by $\omega$) any superadditive map on $\Delta_I:=\{(s,t)\in I^2: \ s\leq t\}$, that is, any map $\omega: \Delta_I \to [0,\infty[$ such that, for all $s\leq u\leq t$,
$$\omega(s,u)+\omega(u,t) \leq \omega(s,t).$$
We will say that a control is \emph{regular} if $\lim_{|t-s|\to 0} \omega(s,t) = 0$. Also, given a control $\omega$ on an interval $I=[a,b]$, we will use the notation $\omega(I):=\omega(a,b)$. 
Given a time interval $I$, a parameter $p>0$ \rmk{and a Banach space $E$, we denote by $\overline V^p_1(I;E)$ the space of functions $g : I \to E$ for which the quantity
$$\sup_{(t_i) \in \mathcal{P}(I)}  \sum_{i} |g_{t_i} - g_{t_{i+1}}|^p $$
is finite, where $\mathcal{P}(I)$ stands for the set of all partitions of the interval $I$. For any $g\in \overline V^p_1(I;E)$,
$$\omega_g(s,t) = \sup_{(t_i) \in \mathcal{P}([s,t])}  \sum_{i} |g_{t_i} - g_{t_{i+1}}|^p$$
defines a control on $I$,} and we denote by $ V^p_1(I;E)$ the set of elements $g\in \overline V^p_1(I;E)$ for which $\omega_g$ is regular on $I$. 
We denote by $\overline V^p_2(I;E)$ the set of two-index maps $g : I\times I \to E$ with left and right limits in each of the variables and for which there exists a control $\omega$ such that
$$
|g_{st}| \leq \omega(s,t)^{\frac{1}{p}}
$$
for all $s,t\in I$. We also define the space $\overline V^p_{2,\text{loc}}(I;E)$ of maps $g:I\times I\to E$ such that there exists a countable covering $\{I_k\}_k$ of $I$ satisfying $g \in \overline V^p_2(I_k;E)$ for any $k$. We write $g\in  V^p_2(I;E)$ or $g\in  V^p_{2,\text{loc}}(I;E)$ if the control can be chosen regular.

\rmk{\begin{definition}\label{def-p-rough-path}
Fix $K\geq 1$, $p\in [2,3)$, and $I$ a finite interval. We will call a continuous (weak geometric) $p$-rough path on $I$ any element $\mathbf{Z}=(Z^1,Z^2) \in V^p_2(I;\R^K) \times V^{\frac{p}{2}}_2(I;\R^{K,K})$ such that for all $1\leq i,j\leq K$ and $s<u<t\in I$,
$$Z^{1,i}_{st}=Z^{1,i}_{su}+Z^{1,i}_{ut} \quad , \quad Z^{2,ij}_{st}=Z^{2,ij}_{su}+Z^{2,ij}_{ut}+Z^{1,i}_{su}Z^{1,j}_{ut} \quad , \quad Z^{2,ij}_{st}+Z^{2,ji}_{st}=Z^{1,i}_{st}Z^{1,j}_{st} \ .$$
Then we will say that a path $z\in V^p_1(I;\R^K)$ can be lifted to a continuous (weak geometric) $p$-rough path $\mathbf{Z}=(Z^1,Z^2)$ if $\mathbf{Z}$ is a (weak geometric) $p$-rough path such that $Z^1_{st}=z_t-z_s$. 
\end{definition}}

\

\begin{lemma}[Sewing lemma]\label{lemma-lambda}
Fix an interval $I$, a Banach space $E$ and a parameter $\zeta >1$. Consider a map $h:I^3 \to E$ such that $h\in \mathrm{Im} \,\delta$ and for every $s<u<t \in I$,
\begin{eqnarray} 
|h_{sut}|\leq \omega(s,t)^{\zeta} ,
\end{eqnarray}
for some regular control $\omega$ on $I$. Then there exists a unique element $\Lambda h\in V_2^{\frac{1}{\zeta}}(I;E)$ such that $\delta(\Lambda h)=h$ and for every $s<t\in I$,
\begin{eqnarray} \label{contraction}
|(\Lambda h)_{st}|\leq C_\zeta\, \omega(s,t)^{\zeta}  ,
\end{eqnarray}
for some universal constant $C_\zeta$.
\end{lemma}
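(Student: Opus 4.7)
The plan is to follow the classical Gubinelli sewing construction: establish uniqueness first via a regularity/superadditivity argument, then construct $\Lambda h$ as the limit of a discrete remainder along refining partitions.

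For uniqueness, suppose $\Lambda_1 h$ and $\Lambda_2 h$ both satisfy the conclusion. Their difference $\phi := \Lambda_1 h - \Lambda_2 h$ satisfies $\delta \phi = 0$ (hence is additive: $\phi_{st} = \phi_{su} + \phi_{ut}$ for $s<u<t$) together with $|\phi_{st}| \leq 2 C_\zeta \omega(s,t)^\zeta$. Iterating additivity on any partition $\{s = t_0 < \cdots < t_n = t\}$ and invoking superadditivity of $\omega$,
\[
|\phi_{st}| = \Bigl|\sum_{i=0}^{n-1} \phi_{t_i t_{i+1}}\Bigr| \leq 2 C_\zeta \sum_i \omega(t_i, t_{i+1})^\zeta \leq 2 C_\zeta \bigl(\max_i \omega(t_i, t_{i+1})\bigr)^{\zeta - 1} \omega(s,t).
\]
Regularity of $\omega$ forces the right-hand side to $0$ along any partition sequence with vanishing mesh, so $\phi \equiv 0$.

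For existence, I fix some $g:\Delta_I \to E$ with $\delta g = h$ (possible by $h \in \mathrm{Im}\,\delta$) and, for each partition $\pi = \{s = t_0 < \cdots < t_n = t\}$ of $[s,t]$, introduce the discrete remainder
\[
R^\pi_{st} := g_{st} - \sum_{i=0}^{n-1} g_{t_i t_{i+1}}.
\]
The key algebraic observation is that removing a single interior point $t_j$ to form $\pi'$ produces $R^\pi_{st} - R^{\pi'}_{st} = h_{t_{j-1} t_j t_{j+1}}$, whose norm is bounded by $\omega(t_{j-1}, t_{j+1})^\zeta$. A parity-based application of superadditivity yields $\min_j \omega(t_{j-1}, t_{j+1}) \leq \tfrac{2 \omega(s,t)}{n-1}$, so iteratively erasing the cheapest point down to the trivial partition $\{s,t\}$ (where $R = 0$) gives the uniform bound
\[
|R^\pi_{st}| \leq \sum_{j \geq 1} \Bigl(\frac{2 \omega(s,t)}{j}\Bigr)^\zeta \leq C_\zeta \omega(s,t)^\zeta,
\]
finite precisely because $\zeta > 1$.

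For nested partitions $\pi \subset \pi'$, the difference $R^{\pi'}_{st} - R^\pi_{st}$ decomposes as a sum of $h$-increments over refined cells, controlled by the same inequality $\sum_i \omega_i^\zeta \leq (\max_i \omega_i)^{\zeta-1} \omega(s,t)$; regularity of $\omega$ then shows that $R^{\pi_n}_{st}$ is Cauchy along any refining sequence of vanishing mesh. Define $(\Lambda h)_{st}$ as the common limit (independent of the refining sequence, and of the choice of $g$ by the uniqueness statement). The relation $\delta(\Lambda h) = h$ follows by taking partitions of $[s,t]$ that contain $u$ and passing to the limit in $\delta R^\pi_{sut} = h_{sut}$, which holds directly from the definition since $g_{st} - g_{su} - g_{ut} = h_{sut}$; the estimate \eqref{contraction} and the $V^{1/\zeta}_2$-membership come from the uniform bound on $R^\pi$. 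The main delicate point is this Cauchy/convergence argument under refinement: without the regularity of $\omega$, one only recovers a uniform bound on $R^\pi$ rather than an actual limit, so exploiting the dichotomy between $\sum \omega_i^\zeta$ and $\omega(s,t)$ via $\zeta > 1$ together with $\max_i \omega_i \to 0$ is the technical heart of the proof.
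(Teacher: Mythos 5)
Your proof follows the same route as the paper's: the same cheapest-point removal with the parity/superadditivity bound $\min_j \omega(t_{j-1},t_{j+1}) \le 2\omega(s,t)/(n-1)$, the same $\sum_k k^{-\zeta}$ estimate, and Cauchy-under-refinement for convergence. You supply two details the paper omits---the uniqueness argument, which the paper defers to a reference, and the explicit refinement step that upgrades the uniform bound on $R^\pi$ to actual convergence of the approximants---but the argument is identical in substance.
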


\begin{proof}
The proof follows that given in \cite[Lemmma 4.2]{friz_course_2014} for H\"older norms, we only specify the modification needed to handle variation norms.
Regarding existence, we recall that since $\delta h=0$, there exists a $2$-index map $B$ such that $\delta B=h$.
Let $s,\,t\in[0,T]$, such that $s<t$, and consider a sequence $\{\pi_n;\,n\geq0\}$ of partitions $\{s=r_0^n<\cdots< r_{k_n+1}^n=t\}$ of $[s,t]$. Assume that $\pi_n\subset\pi_{n+1}$ and \rmk{$\lim_{n\rightarrow\infty} \sup_{0\leq i\leq k_n} |r^n_{i+1}-r^n_i|=0$}. Set 
$$M^{\pi_n}_{st}=B_{st}-\sum_{i=0}^{k_n}B_{r^n_{i},r^n_{i+1}}.$$
Due to superadditivity of $\omega$ it can be seen that there exists $l\in\{1,\dots,k_n\}$ such that
$$\omega(r^n_{l-1},r^n_{l+1})\leq \frac{2\omega(s,t)}{k_n} \ .$$
Now we choose such an index $l$ and transform $\pi_n$ into $\hat \pi$, where
$\hat\pi=\{r_0^n<r_1^n<\cdots<r_{l-1}^n<r^n_{l+1}<\cdots<r_{k_n+1}^n\}$.
Then
$$M^{\hat\pi}_{st}=M^{\pi_n}_{st}-(\delta B)_{r^n_{l-1},r^n_{l},r^n_{l+1}}
=M^{\pi_n}_{st}-h_{r^n_{l-1},r^n_{l},r^n_{l+1}}$$
and hence
$$|M^{\hat\pi}_{st}-M^{\pi_n}_{st}|\leq \omega(r^n_{l-1},r^n_{l+1})^{\zeta}\leq \bigg[\frac{2\omega(s,t)}{k_n}\bigg]^\zeta.$$
Repeating this operation until we end up with the trivial partition $\hat\pi_0=\{s,t\}$, for which $M^{\hat\pi_0}_{st}=0$ this implies that $M^{\pi_n}_{st}$ converges to some $M_{st}$ satisfying
\begin{align*}
|M_{st}| = \lim_n |M^{\pi_n}_{st}|\leq  2^\zeta\omega(s,t)^\zeta\sum_{i=1}^{\infty}i^{-\zeta}\leq C_{\zeta}\,\omega(s,t)^\zeta \ .
\end{align*}
\end{proof}

\subsection{Unbounded rough drivers}\label{sec:urd} 
\rmk{Let $p\in [2,3)$ be fixed for the whole section. In what follows, we call a ($p$-)scale any 4-uplet $\big(E_n,\lVert\cdot\rVert_n\big)_{0\leq n\leq 3}$ of Banach spaces such that $E_{n+1}$ is continuously embedded into $E_n$. Besides, for $0\leq n\leq 3$, we denote by $E_{-n}$ the topological dual of $E_n$.}

\begin{definition}
\label{def:urd}
A continuous unbounded $p$-rough driver with respect to a scale \rmk{$\big(E_n,\lVert\cdot\rVert_n\big)_{0\leq n\leq 3}$}, is a pair $\mathbf{A} = \big(A^1,A^2\big)$ of $2$-index maps such that
$$A^1_{st}\in  \cl(E_{-n},E_{-(n+1)}) \ \ \text{for} \ \ n\in \{0,2\}\ , 
\qquad  A^2_{st} \in \cl(E_{-n},E_{-(n+2)}) \ \  \text{for} \ \ n\in \{0,1\}\ ,$$
and there exists a regular control $\omega_A$ on $[0,T]$ such that for every $s,t\in [0,T]$,
\begin{align}
\lVert A^1_{st}\rVert_{\cl(E_{-n},E_{-(n+1)})}^p &\leq\omega_A(s,t) \qquad \text{for}\ \ n\in \{0,2\}\ , \label{control-a-1}\\
\lVert A^2_{st}\rVert_{\cl(E_{-n},E_{-(n+2)})}^{p/2} &\leq\omega_A(s,t) \qquad \text{for}\ \ n\in \{0,1\}\ ,\label{control-a-2}
\end{align}
and, in addition, the Chen's relation holds true, that is,
\begin{equation}\label{eq:chen-relation}
\delta A^1_{sut}=0,\qquad\delta A^2_{sut}=A^1_{ut}A^1_{su},\qquad\text{for all}\quad 0\leq s<u<t\leq T.
\end{equation}
\end{definition}

\

\rmk{To see how such unbounded drivers arise in the study of rough PDEs, let us consider the following linear heat-equation model:
\begin{align}\label{eq:heat}
\begin{aligned}
\dd u &=\Delta u\, \dd t + V\cdot \nabla u \,\dd z,\qquad x\in\R^N \ ,\ t\in(0,T)\ ,\\
u(0)&=u_0,
\end{aligned}
\end{align}
where $V=(V^1,\dots,V^K)$ is a family of smooth vector fields on $\R^N$, and assuming for the moment that $z=(z^1,\dots,z^K):[0,T]\to \R^K$ is a smooth path. This (classical) equation can of course be understood in the weak sense: for any test-function $\vp\in W^{1,2}(\R^N)$, it holds that
$$\delta u(\vp)_{st}=\int_{s}^{t} u_r (\Delta \varphi)  \dd r +\int_s^t u_r( \text{div}( V^{k}\vp ))\dd z_r\ .$$
Using a basic Taylor-expansion procedure (along the time parameter) and when $\vp\in W^{3,2}(\R^N)$, the latter expression can be easily developed as
\begin{equation}\label{eq:he1}
\delta  u(\varphi )_{s t} =\int_{s}^{t} 
      u_r (\Delta \varphi)  \dd r+ u_{s} (A^{1,
     \ast}_{s t} \varphi ) +  u_{s} (A^{2, \ast}_{s t} \varphi
     ) + u^{\natural}_{s t}( \varphi ) \ ,
\end{equation}
where we have set (using Einstein summation convention)
\begin{equation}\label{eq:def-A-star}
A_{st}^{1,\ast}\vp
=- Z_{st}^{1,k} \, \text{div}( V^{k}\vp )
\quad , \quad
A_{st}^{2,\ast}\vp
= Z_{st}^{2,jk} \, \text{div}( V^{j} \text{div}( V^{k}\vp )) \ ,
\end{equation}
with $Z^{1}, Z^{2}$ defined by
\begin{equation}\label{eq:def-A-star-bis}
Z_{st}^{1,k}=\delta z^k_{st}
\quad , \quad
Z_{st}^{2,jk}=  \int_s^t \delta z^j_{sr} \dd z^k_r \ ,
\end{equation}
and where $u^{\natural}$ morally stands for some third-order remainder (in time) acting on $W^{3,2}(\R^N)$.}

\smallskip

\rmk{Expansion (\ref{eq:he1}) puts us in a position to extend the problem to a rough level and to motivate the above Definition \ref{def:urd}. Assume indeed that $z$ is no longer smooth but can still be lifted to a continuous (weak geometric) $p$-rough path $\mathbf{Z}=(Z^1,Z^2)$ (in the sense of Definition \ref{def-p-rough-path}), for some fixed $p\in [2,3)$. Then the two operator-valued paths $A^{1,\ast}$, $A^{2,\ast}$ can be extended along the very same formula (\ref{eq:def-A-star}), or equivalently along the dual forms 
\begin{equation}\label{driver}
A^1_{st} u:=Z^{1,k}_{st} \,V^k \cdot\nabla u \ ,\qquad A^2_{s t}u:= Z^{2,jk}_{st}\, V^k\cdot\nabla (V^j\cdot\nabla u) \ ,
\end{equation}
which, as one can easily check it, provides us with an example of an unbounded rough driver, for instance on the Sobolev scale $E_n:=W^{n,2}(\R^N)$ ($0\leq n\leq 3$).}

\smallskip

\rmk{Once endowed with $\bfA=(A^1,A^2)$, and along the same principles as in \cite{BG}, our interpretation of \eqref{eq:heat} (or \eqref{eq:he1}) will essentially follow Davie's approach to rough systems (\cite{davie}). Namely, we will call a solution any path $u$ satisfying the property: for every $0 \leqslant s \leqslant t \leqslant T$ and every test-function $\varphi \in E_{3}$, the decomposition (\ref{eq:he1}) holds true, for some $E_{-3}$-valued $2$-index map $u^\natural$ such that for every $\varphi\in E_3$ the map $u^{\natural}_{s t}( \varphi )$ possesses sufficient time regularity, namely, it has finite $r$-variation for some $r < 1$.}

\subsection{A priori estimates and rough Gronwall lemma}\label{subsec:apr}

\rmk{Before we turn to the main purpose of this subsection, namely the presentation of the mathematical tools at the core of our analysis, let us extend the previous considerations and introduce rough PDEs of the general form}
\begin{equation}\label{eq:gen}
\dd g_{t} = \mu(\dd t)+ {\bf A}(\dd t) g_{t} \  ,
\end{equation}
where $\bfA=(A^1,A^2)$ is an unbounded $p$-rough driver on a scale \rmk{$(E_n)_{0\leq n\leq 3}$} and the drift $\mu$, which possibly also depends on the solution, is continuous of finite variation. 

\smallskip

\rmk{Following the above ideas,} we now give a rigorous meaning to such an equation.

\begin{definition}\label{def:gen}
Let $p\in [2,3)$ and fix an interval $I\subseteq [0,T]$. Let ${\bf A} = \big(A^1,A^2\big)$ be a continuous unbounded $p$-rough driver on $I$ with respect to a scale \rmk{$(E_n)_{0\leq n\leq 3}$} and let $\mu\in \overline V_1^1(I;E_{-3})$. A path $g:I \to E_{-0}$ is called a solution (on $I$) of the equation \eqref{eq:gen} provided there exists $q<3$ and  $g^\natural \in V^{\frac{q}{3}}_{2,\text{loc}}(I,E_{-3})$ such that for every $s,t\in I$, $s<t$, \rmk{and $\varphi\in E_3$,}
\begin{equation}\label{eq:gen2}
(\delta g)_{st}(\varphi)=(\delta \mu)_{st}(\varphi)+g_s(\{A^{1,*}_{st}+A^{2,*}_{st}\}\varphi)+g^\natural_{st}(\varphi).
\end{equation}

\end{definition}

\rmk{\begin{remark}
Throughout the paper, we will set up the convention that every $2$-index map with a $\natural$ superscript denotes an element of $V^{1/\zeta}_{2,\text{loc}}([0,T],E_{-3})$ for some $\zeta > 1$.
\end{remark}
\begin{remark}
In the heat-equation model (\ref{eq:gen2}), we thus have $g=u$ and $\mu_t(\varphi)=\mu^1_t(\varphi):=\int_{s}^{t} u_r (\Delta \varphi)  \dd r$. Note however that the formulation (\ref{eq:gen2}) allows the possibility of a very general drift term $\mu$, as illustrated by our forthcoming conservation-law model (see Definition \ref{genkinsol}). In particular, the linearity of $\mu^1$ with respect to $u$ would not play any essential role in our approach of (\ref{eq:gen2}), and therefore we believe that this strategy could also be useful in situations where $\mu$ is derived from a quasilinear elliptic or monotone operator. We do not intend to pursue here this line of research.
\end{remark}
\begin{remark}\label{rk:p-var-topo}
The consideration of $p$-variation topology (and not H{\"o}lder topology) in Definition~\ref{def:gen} will be essential in the study of our rough conservation-law model (Sections \ref{sec:conservation-presentation} to \ref{sec:existence}), for two fundamental (and linked) reasons. First, it is a well-known fact that, even in the smooth case, solutions to conservation laws are likely to exhibit discontinuities, a phenomenon which could not be covered by the H{\"o}lder setting. Besides, in the course of the procedure, we will be led to consider drift terms of the form $\mu_t(\varphi):=m(\mathbf{1}_{[0,t)} \otimes \varphi)$ ($\varphi\in \cac^\infty(\R^N)$), for some measure $m$ on $[0,T]\times \R^N$ that can admit atoms: such a map $\mu$ clearly defines a $1$-variation path, but in general it may not  be continuous.
\end{remark}}

\rmk{Let us now present our first main result on an a priori estimate for the remainder $g^\natural$ involved in (\ref{eq:gen2}). An important role will be played by the $E_{-1}$-valued $2$-index map $g^\sharp$ defined as 
\begin{equation}\label{eq:def-g-sharp}
g^\sharp_{st}(\varphi):=\delta g(\vp)_{st}-g_s(A^{1,*}_{st}\vp)\ .
\end{equation}
Observe that due to \eqref{eq:gen2}, this path is also given by
$$g^\sharp_{st}(\varphi)=(\delta\mu)_{st}(\varphi)+g_s(A^{2,*}_{st}\varphi)+g^\natural_{st}(\varphi).$$
In the following result we will make use of both these expressions, depending on the necessary regularity: the former one contains terms that are less regular in time but more regular in space (i.e. they require less regular test functions) whereas the terms in the latter one are more regular in time but less regular in space.}

\smallskip

\rmk{In order to balance this competition between time and space regularities, and following the ideas of \cite{BG}, we shall assume that a suitable family of \enquote{smoothing} operators can be involved into the procedure:}

\rmk{\begin{definition}\label{def:smoothing}
We call a {\it smoothing} on a given scale $(E_n)_{0\leq n\leq 3}$ any family of operators $(J^\eta)_{\eta\in (0,1)}$ acting on $E_n$ (for $n=1,2$) in such a way that the two following conditions are satisfied:
\begin{equation}\label{eq:reg-J-eta-1}
\lVert  J^\eta-\id\rVert_{\cl(E_m,E_n)} 
\lesssim 
\eta^{m-n} \quad \text{for} \ \ (n,m)\in \{(0,1),(0,2),(1,2)\} \  , 
\end{equation}
\begin{equation}\label{eq:reg-J-eta-2}
\lVert J^\eta\rVert_{\cl(E_n,E_{m})} \lesssim \eta^{-(m-n)} \quad \text{for} \ \ (n,m)\in \{(1,1),(1,2),(2,2),(1,3),(2,3)\} \ .
\end{equation}
\end{definition}}

\

\rmk{With this framework in mind, our main technical result concerning equation \eqref{eq:gen}-\eqref{eq:gen2} can now be stated as follows:} 

\begin{theorem}\label{theo:apriori}
\rmk{Let $p\in[2,3)$ and fix an interval $I\subseteq [0,T]$. Let ${\bf A}=(A^1,A^2)$ be a continuous unbounded $p$-rough driver with respect to a scale $(E_n)_{0\leq n\leq 3}$, endowed with a smoothing $(J^\eta)_{\eta\in (0,1)}$ (in the sense of Definition~\ref{def:smoothing}), and let $\omega_A$ be a regular control satisfying \eqref{control-a-1}-\eqref{control-a-2}.
Consider a path $\mu \in \overline V_1^1(I;E_{-3})$ for which there exist two controls $\omega^1_\mu,\omega_\mu^2$ and a constant $\lambda\in[p,3]$ such that for every $\varphi \in E_3$, $s<t\in I$, $\eta\in (0,1)$ and $k\in \{1,2\}$, one has
\begin{equation}\label{cond-mu}
|(\delta \mu)_{st}(J^\eta\varphi)|\leq \omega^1_\mu(s,t)\, \|\varphi\|_{E_1}+\eta^{k-\la}\omega^2_\mu(s,t)\, \|\varphi\|_{E_k} \ .
\end{equation}
Let  $g$ be a solution on $I$  of the equation \eqref{eq:gen2} such that $g^\natural \in V^{\frac{q}{3}}_2(I;E_{-3})$, for some parameter
\begin{equation}\label{q}
q\in\left[\frac{3p\lambda}{2p+\lambda},3\right).
\end{equation}
Finally, let $G_{st}=\cn[g;L^\infty(s,t;E_{-0})]$, where we recall that the notation $\cn$ is introduced by~\eqref{eq:notation-norms},  fix
 $\kappa\in[0,\tfrac{1}{p})$ such that
\begin{equation}\label{kappa}
\frac{1}{2}\left(\frac{3}{p}-\frac{3}{q}\right)\geq \kappa\geq\frac{1}{\lambda-2}\left(\frac{3}{q}-1-\frac{3-\lambda}{p}\right),
\end{equation}
and set
\begin{align}\label{eq:def-omega-I}
\omega_\ast(s,t) 
:=  G_{st}^\frac{q}{3}\,\omega_A(s,t)^{\frac{q}{3}(\frac{3}{p}-2\kappa)}+\omega^1_\mu(s,t)^\frac{q}{3}\omega_A(s,t)^{\frac{q}{3p}}+\omega^2_\mu(s,t)^\frac{q}{3}\omega_A(s,t)^{\frac{q}{3}(\frac{3-\lambda}{p}+\kappa (\lambda-2))}.
\end{align}
Then there exists a constant $L=L(p,q,\kappa)>0$ such that if $\omega_A(I)\leq L$, one has, for all $s<t\in I$,
\begin{equation}\label{apriori-bound} 
\begin{split}
 \|g^{\natural}_{st}\|_{E_{-3}}
 \lesssim_{q} \omega_\ast(s,t)^{\frac{3}{q}}.
\end{split}
\end{equation}}
\end{theorem}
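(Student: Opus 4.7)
The plan is to derive an identity for $\delta g^{\natural}$ from the equation and Chen's relation, estimate it piece by piece using the smoothing operators $J^{\eta}$, and then apply the Sewing Lemma (Lemma~\ref{lemma-lambda}) to reconstruct $g^{\natural}$ itself.

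\emph{Step 1 (Identity for $\delta g^{\natural}$).} Applying $\delta$ to both sides of~\eqref{eq:gen2}, using $\delta\circ\delta=0$ to kill the $\delta(\delta g)$ and $\delta(\delta\mu)$ contributions, invoking the Chen relations~\eqref{eq:chen-relation}, and rewriting in terms of $g^{\sharp}$ via~\eqref{eq:def-g-sharp}, a direct algebraic manipulation yields
\begin{equation*}
\delta g^{\natural}_{sut}(\varphi) \;=\; g^{\sharp}_{su}(A^{1,\ast}_{ut}\varphi) \;+\; (\delta g)_{su}(A^{2,\ast}_{ut}\varphi),\qquad \varphi\in E_3.
\end{equation*}

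\emph{Step 2 (Decomposition into three families of terms).} Re-expressing $g^{\sharp}_{su}$ and $(\delta g)_{su}$ via~\eqref{eq:gen2}, the quantity $\delta g^{\natural}_{sut}(\varphi)$ becomes a sum of three types of contributions: \emph{(a)} pure ``$g$-products'' $g_{s}(A^{k,\ast}_{su}A^{\ell,\ast}_{ut}\varphi)$, controlled by $G_{st}\,\omega_A(s,t)^{(k+\ell)/p}\|\varphi\|_{E_3}$; \emph{(b)} ``$\mu$-terms'' $(\delta\mu)_{su}(A^{k,\ast}_{ut}\varphi)$; and \emph{(c)} ``$g^{\natural}$-terms'' $g^{\natural}_{su}(A^{k,\ast}_{ut}\varphi)$, to be controlled using the a priori regularity $g^{\natural}\in V^{q/3}_{2}(I;E_{-3})$.

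\emph{Step 3 (Smoothing interpolation).} The terms in \emph{(b)} and \emph{(c)} require test functions in $E_3$, whereas $A^{k,\ast}_{ut}\varphi$ only lives in $E_{3-k}$. We therefore insert the smoothing: for every relevant $\psi$, split $\psi=J^{\eta}\psi+(\mathrm{Id}-J^{\eta})\psi$ with $\eta=\omega_A(s,t)^{\kappa}$, apply~\eqref{cond-mu} to the $J^{\eta}$-piece of the $\mu$-terms, and use~\eqref{eq:reg-J-eta-1}--\eqref{eq:reg-J-eta-2} on both pieces of the $g^{\natural}$-terms to trade regularity against powers of $\eta$. The constraints~\eqref{q}--\eqref{kappa} together with $\kappa\in[0,1/p)$ are precisely those that make the resulting exponents of $\omega_A$ match those appearing in $\omega_{\ast}$, guarantee they are nonnegative, and ensure that the total leading Sewing exponent equals $3/q>1$.

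\emph{Step 4 (Sewing and closure).} Collecting all estimates yields a bound of the form
\begin{equation*}
|\delta g^{\natural}_{sut}(\varphi)| \;\lesssim\; \omega_{\ast}(s,t)^{3/q}\|\varphi\|_{E_3} \;+\; \omega_A(s,t)^{\beta}\,\cn[g^{\natural};V^{q/3}_{2}(s,t;E_{-3})]\,\|\varphi\|_{E_3}
\end{equation*}
for some $\beta>0$. Since $3/q>1$ and $\delta g^{\natural}\in\mathrm{Im}\,\delta$, Lemma~\ref{lemma-lambda} produces a unique $\Lambda\delta g^{\natural}\in V^{q/3}_{2}(I;E_{-3})$ satisfying the same bound; the classical uniqueness argument for additive 2-index maps (any $h$ with $\delta h=0$ and $|h_{st}|\lesssim\omega(s,t)^{\zeta}$ for $\zeta>1$ and $\omega$ regular must vanish) then identifies $g^{\natural}=\Lambda\delta g^{\natural}$. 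Finally, exploiting $\omega_A(I)\leq L$ small, the $\cn[g^{\natural};V^{q/3}_{2}]$-contribution is absorbed into the left-hand side on sufficiently small subintervals, yielding~\eqref{apriori-bound}, and a standard patching argument extends the bound to the full interval.

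\emph{Main obstacle.} The trickiest part will be the careful balancing of exponents in Step~3: each application of $A^{k,\ast}$ pays $\omega_A^{k/p}$ but pushes the test function into a lower scale, forcing insertion of $J^{\eta}$ at a polynomial-in-$\eta$ cost; the parameter constraints~\eqref{q}--\eqref{kappa} have to be arranged so as to \emph{simultaneously} (i)~produce a Sewing-compatible $\omega_{\ast}$ with exponent $3/q>1$, (ii)~guarantee nonnegative powers of $\omega_A$ everywhere, and (iii)~create a strictly positive extra power of $\omega_A$ in front of the $g^{\natural}$ self-coupling, which is crucial for the closure via smallness of $\omega_A(I)$.
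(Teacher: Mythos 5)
Your Steps 1, 2 and 4 faithfully reproduce the paper's strategy: you derive the same identity $\delta g^{\natural}_{sut}(\varphi)=g^{\sharp}_{su}(A^{1,\ast}_{ut}\varphi)+(\delta g)_{su}(A^{2,\ast}_{ut}\varphi)$, expand into $g$-products, $\mu$-terms and $g^{\natural}$-terms via the two expressions for $g^{\sharp}$, insert $J^{\eta}$ and $(\mathrm{Id}-J^{\eta})$, and close via the Sewing Lemma followed by absorption of the $g^{\natural}$ self-coupling on an interval with $\omega_A(I)\leq L$. The architecture is correct, and you rightly identify the exponent balancing as the heart of the matter.

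The concrete gap is precisely there, in Step 3: the choice $\eta=\omega_A(s,t)^{\kappa}$ does not balance the exponents, and the claim that \eqref{q}--\eqref{kappa} are ``precisely those that make the resulting exponents of $\omega_A$ match those appearing in $\omega_{\ast}$'' is false for this $\eta$. The correct $(s,t)$-dependence is $\omega_A(s,t)^{1/p-\kappa}$, and the paper in fact takes $\eta=\omega_A(I)^{-1/p+\kappa}\,\omega_A(s,t)^{1/p-\kappa}$. To see the failure, take the worst $g$-product receiving an $\eta^{2}$ from \eqref{eq:reg-J-eta-1}, namely $\eta^{2}\,G_{st}\,\omega_A(s,t)^{1/p}$: with your $\eta$ this becomes $G_{st}\,\omega_A(s,t)^{1/p+2\kappa}$ rather than the required $G_{st}\,\omega_A(s,t)^{3/p-2\kappa}$. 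The left inequality in \eqref{kappa} is exactly $\tfrac{q}{3}(\tfrac{3}{p}-2\kappa)\geq 1$, which is what makes $G^{q/3}\omega_A^{\frac{q}{3}(\frac{3}{p}-2\kappa)}$ superadditive and hence usable in the Sewing Lemma; with your exponent one would instead need $\tfrac{q}{3}(\tfrac{1}{p}+2\kappa)\geq 1$, i.e.\ $\kappa\geq\tfrac12(\tfrac{3}{q}-\tfrac{1}{p})$, which is incompatible with \eqref{kappa} on essentially the whole admissible range (e.g.\ $p=q=\lambda=5/2$ forces $\kappa=0$, while your condition would require $\kappa\geq 2/5$). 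Then $\omega_{\ast}$ is not a control and the argument does not close. The same mismatch occurs for the $\omega^2_\mu$-term, where $\eta^{2-\lambda}\omega_A(s,t)^{1/p}$ must produce the exponent $\tfrac{3-\lambda}{p}+\kappa(\lambda-2)$. Finally, the $\omega_A(I)^{-1/p+\kappa}$ prefactor in the paper's $\eta$ is not cosmetic: at the degenerate end $\kappa=0$ it is what puts a positive power of $\omega_A(I)$ (rather than of $\omega_A(s,t)$) in front of the $g^{\natural}$ self-coupling, and that positive power is exactly what the smallness condition $\omega_A(I)\leq L$ is designed to kill in your Step 4.
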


\rmk{The high level of generality of this statement (that is, the involvement of three parameters $\ka,\la,q$ and two controls $\omega^1_\mu,\omega^2_\mu$) will indeed be required in the sequel, and more precisely in the strategy displayed in Section \ref{sec:uniq} for rough conservation laws. However, let us here specialize this result for a more readable statement, which will turn out to be sufficient for our other applications (namely, in Section \ref{subsec:first-application} and in Sections \ref{sec:apriori}-\ref{sec:existence}):} 

\rmk{\begin{corollary}\label{cor:apriori}
In the setting of Theorem \ref{theo:apriori}, consider a path $\mu \in \overline V_1^1(I;E_{-3})$ for which there exists a  control $\omega_\mu$ such that for all $s<t\in I$ and $\varphi \in E_3$, 
\begin{equation}\label{cond-mu-cor}
|(\delta \mu)_{st}(\varphi)|\leq \omega_\mu(s,t)\, \|\varphi\|_{E_2} \ .
\end{equation}
Besides, let $g$ be a solution on $I$  of the equation \eqref{eq:gen} such that $g^\natural \in V^{\frac{p}{3}}_2(I;E_{-3})$.
Then there exists a constant $L=L(p)>0$ such that if $\omega_A(I)\leq L$, one has, for all $s,t\in I$, $s<t$,
\begin{equation}\label{apriori-bound-cor} 
\begin{split}
 \|g^{\natural}_{st}\|_{E_{-3}}
 \lesssim_{q} \cn[g;L^\infty(s,t;E_{-0})]\,\omega_A(s,t)^\frac{3}{p}+\omega_\mu(s,t)\omega_A(s,t)^{\frac{3-p}{p}} \ .
\end{split}
\end{equation}
\end{corollary}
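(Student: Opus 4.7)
The corollary is a direct specialization of Theorem~\ref{theo:apriori}, so the plan is to exhibit a choice of the free parameters $(q,\lambda,\kappa,k,\omega^1_\mu,\omega^2_\mu)$ under which the hypotheses of the theorem reduce to \eqref{cond-mu-cor} and its conclusion reduces to \eqref{apriori-bound-cor}. Inspecting the exponents of $\omega_A$ appearing in the two terms of \eqref{apriori-bound-cor}, namely $3/p$ and $(3-p)/p$, and comparing with the exponents appearing in $\omega_\ast$ raised to the power $3/q$, I am led to the choice $q = p$, $\lambda = p$, $\kappa = 0$, $\omega^1_\mu \equiv 0$ and $\omega^2_\mu = C\,\omega_\mu$ for a constant $C$ absorbing the uniform norm of $J^\eta$ on $E_2$.

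The first step is to check the assumption \eqref{cond-mu} on $\mu$ under this choice. Starting from \eqref{cond-mu-cor}, one writes
$$
|(\delta \mu)_{st}(J^\eta\varphi)|\;\leq\;\omega_\mu(s,t)\,\|J^\eta\varphi\|_{E_2},
$$
and then bounds $\|J^\eta\varphi\|_{E_2}$ by $\|\varphi\|_{E_2}$ using \eqref{eq:reg-J-eta-2} with $(n,m)=(2,2)$ (case $k=2$), or by $\eta^{-1}\|\varphi\|_{E_1}$ using \eqref{eq:reg-J-eta-2} with $(n,m)=(1,2)$ (case $k=1$). In both cases the factor in front of $\omega_\mu$ is dominated by $\eta^{k-p}$ because $\eta\in(0,1)$ and $p\geq 2$ ensures $\eta^{k-p}\geq \eta^{\min(0,-1)}$. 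Hence \eqref{cond-mu} holds with $\omega^1_\mu\equiv 0$ and $\omega^2_\mu=C\omega_\mu$. The second step is to verify the admissibility conditions \eqref{q} and \eqref{kappa}: for $\lambda=p$ the interval in \eqref{q} is $[p,3)$, so $q=p$ is the left endpoint and the hypothesis $g^\natural\in V^{p/3}_2(I;E_{-3})$ indeed matches the requirement $g^\natural\in V^{q/3}_2(I;E_{-3})$; and both the lower and upper bounds in \eqref{kappa} vanish identically when $(q,\lambda)=(p,p)$, so $\kappa=0$ lies in the admissible range $[0,1/p)$.

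With these choices Theorem~\ref{theo:apriori} applies (provided $\omega_A(I)\leq L$), and the control $\omega_\ast$ reduces to
$$
\omega_\ast(s,t)\;=\;G_{st}^{p/3}\,\omega_A(s,t)\;+\;C^{p/3}\,\omega_\mu(s,t)^{p/3}\,\omega_A(s,t)^{(3-p)/3}.
$$
Raising to the power $3/p\geq 1$ and using the elementary inequality $(a+b)^{3/p}\leq 2^{3/p-1}(a^{3/p}+b^{3/p})$, one obtains
$$
\|g^\natural_{st}\|_{E_{-3}}\;\lesssim_q\;\omega_\ast(s,t)^{3/p}\;\lesssim_{p}\;G_{st}\,\omega_A(s,t)^{3/p}\;+\;\omega_\mu(s,t)\,\omega_A(s,t)^{(3-p)/p},
$$
which is exactly \eqref{apriori-bound-cor}. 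The proof therefore contains no genuine analytic obstacle; the only care needed is the parameter bookkeeping and the verification that the smoothing estimates \eqref{eq:reg-J-eta-1}-\eqref{eq:reg-J-eta-2} together with $p\geq 2$ allow one to turn the $\eta$-free bound \eqref{cond-mu-cor} into an $\eta$-dependent bound of the form \eqref{cond-mu}.
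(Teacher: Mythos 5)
Your proposal is correct and follows essentially the same route as the paper's proof: bound $\|J^\eta\varphi\|_{E_2}$ via \eqref{eq:reg-J-eta-2} to turn \eqref{cond-mu-cor} into the $\eta$-dependent form \eqref{cond-mu} with $\lambda=q=p$, $\kappa=0$, $\omega^1_\mu=0$, $\omega^2_\mu=c\,\omega_\mu$, then invoke Theorem~\ref{theo:apriori}. The extra bookkeeping you carry out (verifying \eqref{q}, \eqref{kappa}, and the final power $3/p$) is sound and merely makes explicit what the paper leaves implicit.
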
}

\rmk{\begin{proof}[Proof of Corollary \ref{cor:apriori}]
Thanks to (\ref{cond-mu-cor}), one has, for all $\vp\in E_3$ and $\eta\in(0,1)$,
\begin{align*}
|(\delta \mu)_{st}(J^\eta \varphi)|\leq \omega_\mu(s,t)\, \|J^\eta \varphi\|_{E_2} &\lesssim \omega_\mu(s,t) \min \big( \eta^{-1}\| \varphi\|_{E_1}, \|\varphi\|_{E_2} \big)\\
&\lesssim \omega_\mu(s,t) \min \big( \eta^{1-p}\| \varphi\|_{E_1}, \eta^{2-p}\|\varphi\|_{E_2} \big) \ ,
\end{align*}
which readily allows us to take $\la=q=p$, $\ka=0$, $\omega^1=0$ and $\omega^2=c\, \omega_\mu$ (for some universal constant $c$) in the statement of Theorem \ref{theo:apriori}.
\end{proof}}

\begin{proof}[Proof of Theorem \ref{theo:apriori}]
Let $\omega_\natural(s,t)$ be a regular control such that $\|g^{\natural}_{st}\|_{E_{-3}} \le \omega_\natural(s,t)^{\frac{3}{q}}$ for any $s,t \in I$.
Let $\varphi \in E_3$ be such that $\lVert \varphi \rVert_{E_3} \leq 1$. We first show that
\begin{equation}\label{delta-reste}
(\delta g^{\natural}(\varphi))_{sut}= (\delta g)_{su}(A^{2,*}_{ut} \varphi)+g^{\sharp}_{su}(A^{1,*}_{ut} \varphi) ,
\end{equation}
where $g^{\sharp}$ was defined in \eqref{eq:def-g-sharp}.
Indeed, owing to \eqref{eq:gen2}, we have 
\begin{equation*}
g^{\natural}_{st}(\vp) = \delta g(\vp) _{st}- g_s \big( [A_{st}^{1,*} + A_{st}^{2,*}](\vp)\big) - \delta \mu(\vp)_{st} \ .
\end{equation*}
Applying $\delta$ on both sides of this identity and recalling Chen's relations \eqref{eq:chen-relation} as well as the fact that $\delta \delta=0$ we thus get
\begin{equation*}
\delta g^{\natural}_{sut}(\vp) = (\delta g)_{su} ( [A^{1,*}_{ut} + A_{ut}^{2,*}](\vp)) - g_s (  A^{1,*}_{su}A^{1,*}_{ut}(\vp)) .
\end{equation*}
Plugging relation \eqref{eq:def-g-sharp} again into this identity, we end up with our claim \eqref{delta-reste}.

The aim now is to bound the terms on the right hand side of \eqref{delta-reste} separately by the allowed quantities $G,\,\omega_\mu,\,\omega_\natural$ and to reach a sufficient time regularity as required by the sewing lemma \ref{lemma-lambda}.
To this end, we make use of the smoothing operators $(J^\eta)$ and repeatedly apply the equation \eqref{eq:gen2} as well as the two equivalent definitions of $g^\sharp$ from \eqref{eq:def-g-sharp}. We obtain
\begin{align}\label{eq:dcp-g-nat}
\delta g^\natural(\vp)_{sut}&=(\delta g)_{su}(J^{\eta} A^{2,*}_{ut} \varphi)+(\delta g)_{su}((\id-J^{\eta}) A^{2,*}_{ut} \varphi) \notag\\
&\quad+g^{\sharp}_{su}(J^{\eta} A^{1,*}_{ut} \varphi)+g^{\sharp}_{su}((\id-J^{\eta})A^{1,*}_{ut} \varphi) \notag\\
&=g_{s}(A^{1,*}_{su}J^{\eta} A^{2,*}_{ut} \varphi)+g_{s}(A^{2,*}_{su}J^{\eta} A^{2,*}_{ut} \varphi)+(\delta\mu)_{su}(J^{\eta} A^{2,*}_{ut} \varphi)+g^\natural_{su}(J^{\eta} A^{2,*}_{ut} \varphi) \notag\\
&\quad+(\delta g)_{su}((\id-J^{\eta}) A^{2,*}_{ut} \varphi) \notag\\
&\quad+g_{s}(A^{2,*}_{su}J^{\eta} A^{1,*}_{ut} \varphi)+(\delta\mu)_{su}(J^{\eta} A^{1,*}_{ut} \varphi)+g^\natural_{su}(J^{\eta} A^{1,*}_{ut} \varphi) \notag\\
&\quad+(\delta g)_{su}((\id-J^{\eta})A^{1,*}_{ut}\vp)-g_{s}(A^{1,*}_{su}(\id-J^{\eta}) A^{1,*}_{ut} \varphi) \notag\\
&=I_1+\cdots+I_{10}
\end{align}
The use of the smoothing operators reflects the competition between space and time regularity in the various terms in the equation. To be more precise, the only available norm of $g$ is $L^\infty(0,T;E_{-0})$. So on the one hand $g$ does not possess any time regularity (at least at this point of the proof) but on the other hand it does not require any space regularity of the corresponding test functions. In general, the presence of $(\id-J^\eta)$ allows to apply the first estimate \eqref{eq:reg-J-eta-1} to make use of the additional space regularity in order to compensate for the lack of time regularity. Correspondingly, the second estimate \eqref{eq:reg-J-eta-2} allows to use the additional time regularity in order to compensate for the lack of space regularity.

\rmk{Now bound the above as follows.
\begin{align*}
|I_1|+|I_2|+|I_6|&\lesssim G_{st}\,\omega_A(s,t)^\frac3p+\eta^{-1}\,G_{st}\,\omega_A(s,t)^\frac4p+G_{st}\,\omega_A(s,t)^\frac3p\ ,\\
|I_3|&\lesssim \omega_\mu^1(s,t)\omega_A(s,t)^{\frac2p}+\eta^{1-\lambda}\,\omega_\mu^2(s,t)\omega_A(s,t)^{\frac2p}\ ,\\
|I_7|&\lesssim \omega_\mu^1(s,t)\omega_A(s,t)^{\frac1p}+\eta^{2-\lambda}\,\omega_\mu^2(s,t)\omega_A(s,t)^{\frac1p}\ ,\\
|I_4|+|I_8|&\lesssim \eta^{-2} \,\omega_\natural(s,t)^\frac{3}{q}\omega_A(s,t)^{\frac2p}+\eta^{-1}\,\omega_\natural(s,t)^\frac{3}{q}\omega_A(s,t)^{\frac1p}\ ,\\
|I_5|+|I_9|+|I_{10}|&\lesssim {\eta}\, G_{st}\,\omega_A(s,t)^\frac2p+\eta^2\,G_{st}\,\omega_A(s,t)^\frac1p+{\eta}\,G_{st}\,\omega_A(s,t)^\frac2p \ .
\end{align*}
In order to balance the various terms, we choose 
$$\eta=\omega_A(I)^{-\frac{1}{p}+\kappa} \omega_A(s,t)^{\frac{1}{p}-\kappa} \in (0,1) \ ,$$ 
where $\kappa\in[ 0,\tfrac{1}{p})$ is the parameter picked along (\ref{kappa}). Assuming that $\omega_A(I)\leq 1$ we deduce
\begin{align*}
|\delta g^\natural(\vp)_{sut}|&\lesssim G_{st}\,\omega_A(s,t)^\frac3p+\omega_\mu^1(s,t)\omega_A(s,t)^{\frac1p}+\omega_\mu^2(s,t)\omega_A(s,t)^{\frac{3-\lambda}{p}+\kappa(\lambda-2)}\\
&\quad+2\omega_A(I)^{\frac{1}{p}+\kappa}\omega_\natural(s,t)^\frac{3}{q}\omega_A(s,t)^{\kappa}+G_{st}\,\omega_A(I)^{-2(\frac{1}{p}+\kappa)}\omega_A(s,t)^{\frac{3}{p}-2\kappa}.
\end{align*}}
Note that there are only two terms where we kept track of $\omega_A(I)$, namely, the one that needs to be absorbed to the left hand side eventually, i.e. the one containing $\omega_\natural$, and the one with a negative power. The latter one can be further estimated from above by a constant depending on $\bfA$ and $I$ if we assume that $I\neq\emptyset$ and the former one will then determine the value of the constant $L$ from the statement of the Theorem.
Consequently, recalling the definition \eqref{eq:def-omega-I} of $\omega_{\ast}$, we obtain
\rmk{\begin{equation*}
\|\delta g^{\natural}_{sut} \|_{E_{-3}} \lesssim \omega_{\ast}(s,t)^\frac{3}{q}+\omega_A(I)^{\frac{1}{p}+2\kappa}\omega_\natural(s,t)^\frac{3}{q} \ .
\end{equation*}
At this point, observe that the mapping $\omega_{\ast}$ defines a regular control. Indeed, on the one hand, the regularity of $\omega_{\ast}$ easily stems from the continuity of $\omega_A$. On the other hand, superadditivity is obtained from \cite[Exercise 1.9]{FV} by recalling that both $\omega_{A}$ and $\omega_{\mu}$ are controls and using condition (\ref{kappa}), which can also be expressed as
$$\frac{q}{3}\left(\frac{3}{p}-2\kappa\right)\geq 1\ ,\qquad \frac{q}{3}+\frac{q}{3}\left(\frac{3-\lambda}{p}+\kappa (\lambda-2)\right)\geq1 \ .$$
Since $\omega_\natural$ is also a regular control, $\delta g^{\natural}$ satisfies the assumptions of Lemma~\ref{lemma-lambda} and we can thus conclude that, for all $s<t\in I$,
$\|g^{\natural}_{st}\|_{E_{-3}} \leq \omega'_\natural(s,t)^{\frac{3}{q}}
$
where 
$$
 \omega'_\natural(s,t) := C_{q} (\omega_\ast(s,t)+\omega_A(I)^{\frac{q}{3}(\frac{1}{p}+2\kappa)}\omega_\natural(s,t))
$$
is a new control. Let us define $L>0$ through the relation $C_{q} L^{\frac{q}{3}(\frac{1}{p}+2\kappa)}=\frac12$, so that if the interval $I$ satisfies $\omega_A(I)\leq \min(1, L)$, the above reasoning yields, for all $s<t\in I$,
$$\|g^{\natural}_{st}\|_{E_{-3}}^\frac{q}{3}\leq C_{q}\, \omega_\ast(s,t)+\frac12 \omega_\natural(s,t) \ .$$
Iterating the procedure (on $I$ such that $\omega_A(I)\leq \min(1,L)$), we get that for all $s<t\in I$ and $n\geq 0$,
$$\|g^{\natural}_{st}\|_{E_{-3}}^\frac{q}{3}\leq C_{q}\, \omega_\ast(s,t) \Big( \sum_{i=0}^n 2^{-i}\Big)+2^{-(n+1)} \omega_\natural(s,t)  \ .$$
By letting $n$ tend to infinity, we obtain the desired estimate \eqref{apriori-bound}.}
\end{proof}

\rmk{With Theorem \ref{theo:apriori} in hand, let us introduce the second main ingredient of our strategy toward a priori bounds for equation \eqref{eq:gen2}: the Rough Gronwall Lemma. In brief, and just as its classical counterpart, this property will allow us to turn local affine-type estimates (for the increments of a path) into a global uniform bound.}

\begin{lemma}[Rough Gronwall Lemma]\label{lemma:basic-rough-gronwall}
Fix a time horizon $T>0$ and let $G:[0,T] \to [0,\infty)$ be a path such that for some constants $C,L>0$, $\kappa\geq 1$ and some controls $\omega_1,\omega_2$ on $[0,T]$ with $\omega_1$ being regular, one has
\begin{equation}\label{eq:a-priori-g}
\delta G_{st} \leq C \Big(\sup_{0\leq r\leq t} G_r \Big) \, \omega_1(s,t)^{\frac{1}{\kappa}} + \omega_2(s,t),
\end{equation}
for every $s<t\in [0,T]$ satisfying $\omega_1(s,t) \leq L$. Then it holds
$$\sup_{0\leq t\leq T} G_t \leq 2 \exp\Big( \frac{\omega_1(0,T)}{\al L}\Big) \cdot \Big\{ G_0+\sup_{0\leq t\leq T}\Big(\omega_2(0,t) \, \exp\Big(-\frac{\omega_1(0,t)}{\al L}\Big)\Big) \Big\} ,$$
where $\al$ is defined as 
\begin{equation}\label{eq:def-alpha}
\al=\min\left(1, \, \frac{1}{L (2Ce^2)^{\kappa} }\right).
\end{equation}
\end{lemma}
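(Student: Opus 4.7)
The plan is to reduce the continuous inequality \eqref{eq:a-priori-g} to a discrete linear recursion on a well-chosen partition of $[0,T]$, and then to absorb the resulting geometric growth via an exponential substitution. More precisely, I will first partition $[0,T]$ on the scale $\alpha L$ — the threshold $\alpha$ in \eqref{eq:def-alpha} being calibrated precisely so that the nonlinear term becomes a strict contraction — then collapse the local estimate into a recursion for $S_i := \sup_{0 \leq r \leq \tau_i} G_r$, and finally run a discrete Gronwall argument in the rescaled variable $Y_i := S_i \exp(-\omega_1(0,\tau_i)/(\alpha L))$.

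Since $\omega_1$ is regular, times $0 = \tau_0 < \tau_1 < \cdots < \tau_N = T$ with $\omega_1(\tau_i, \tau_{i+1}) = \alpha L$ for $i \leq N-2$ and $\omega_1(\tau_{N-1}, \tau_N) \leq \alpha L$ can be constructed by a standard continuity argument, and superadditivity forces $N \leq \omega_1(0,T)/(\alpha L) + 1 < \infty$. Because $\alpha \leq 1$, every subinterval satisfies $\omega_1(\tau_i, \tau_{i+1}) \leq L$, so the hypothesis \eqref{eq:a-priori-g} applies on each $[\tau_i, t] \subset [\tau_i, \tau_{i+1}]$. Setting $\beta := C(\alpha L)^{1/\kappa}$, taking the supremum over $t$, and using monotonicity of $S_{\cdot}$ together with $G_{\tau_i} \leq S_i$, the inequality \eqref{eq:a-priori-g} collapses into the discrete estimate
\begin{equation*}
(1-\beta)\, S_{i+1} \leq S_i + \omega_2(\tau_i, \tau_{i+1}).
\end{equation*}
The choice \eqref{eq:def-alpha} of $\alpha$ forces $\beta \leq 1/(2e^2)$, so $\gamma := (1-\beta)^{-1} \leq 2e^2/(2e^2 - 1)$ is well defined and, crucially, satisfies both $\gamma e^{-1} < 1$ and $\gamma/(1 - \gamma e^{-1}) \leq 2$.

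Introducing $Y_i$ as above and $W := \sup_{0 \leq t \leq T} \omega_2(0,t)\, \exp(-\omega_1(0,t)/(\alpha L))$, superadditivity of $\omega_1$ gives $\omega_1(0,\tau_{i+1}) - \omega_1(0,\tau_i) \geq \omega_1(\tau_i, \tau_{i+1}) = \alpha L$, while superadditivity of $\omega_2$ yields $\omega_2(\tau_i, \tau_{i+1}) \leq \omega_2(0, \tau_{i+1}) \leq W \exp(\omega_1(0,\tau_{i+1})/(\alpha L))$. Multiplying the discrete estimate by $\exp(-\omega_1(0,\tau_{i+1})/(\alpha L))$ therefore transforms it into the genuinely contractive recursion
\begin{equation*}
Y_{i+1} \leq \gamma e^{-1}\, Y_i + \gamma W.
\end{equation*}
Iterating from $Y_0 = G_0$ and summing the geometric series yields $Y_N \leq G_0 + \gamma W/(1 - \gamma e^{-1}) \leq G_0 + 2W$; unfolding the substitution via $S_N = Y_N \exp(\omega_1(0,T)/(\alpha L))$ and using $G_0 + 2W \leq 2(G_0 + W)$ gives the announced bound.

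The only slightly delicate point is the numerical calibration \eqref{eq:def-alpha} of $\alpha$: it has to be small enough to simultaneously ensure that $1 - \beta$ is invertible \emph{and} that the exponential gain $e^{-1}$ from the rescaling strictly dominates the local amplification $\gamma$. This double requirement is precisely what produces the clean exponent $\omega_1(0,T)/(\alpha L)$ and the leading factor $2$ in the final estimate; the shorter last interval $[\tau_{N-1}, \tau_N]$ causes no additional difficulty since the contraction-type bound still applies there.
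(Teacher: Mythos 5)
Your proposal follows the same skeleton as the paper's proof — partition $[0,T]$ at the scale $\alpha L$, rescale by $\exp(-\omega_1(0,\cdot)/(\alpha L))$, and calibrate $\alpha$ so the quadratic term is absorbable — but it replaces the paper's global-supremum bootstrap (the quantity $H_{\leq T}$ absorbed in one shot) with a genuine one-step linear recursion $Y_{i+1}\leq \gamma e^{-1}Y_i+\gamma W$, and here lies the gap. The factor $e^{-1}$ in your recursion requires $\omega_1(0,\tau_{i+1})-\omega_1(0,\tau_i)\geq \alpha L$, which you obtain from $\omega_1(\tau_i,\tau_{i+1})=\alpha L$. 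This holds for $i\leq N-2$ but fails for the final step, where by construction $\omega_1(\tau_{N-1},\tau_N)\leq\alpha L$ only; there you merely get $Y_N\leq\gamma Y_{N-1}+\gamma W$ with no exponential gain. Your closing remark that ``the shorter last interval \ldots causes no additional difficulty since the contraction-type bound still applies there'' is therefore not justified: the contraction in $S$ does apply, but the contractive form of the recursion in $Y$ does not, and the geometric-series summation yielding $Y_N\leq G_0+\gamma W/(1-\gamma e^{-1})$ is broken at the last step.

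Quantitatively this is not a cosmetic issue. Combining the legitimate recursion up to $Y_{N-1}$ with the weaker last step gives
\[
Y_N\;\leq\;\gamma\,Y_{N-1}+\gamma W\;\leq\;\gamma G_0+\gamma\Big(\tfrac{\gamma}{1-\gamma e^{-1}}+1\Big)W ,
\]
and even in the extreme case $\beta\to 0$ (so $\gamma\to 1$) the coefficient of $W$ is $1+e/(e-1)\approx 2.58>2$; with $\gamma$ at its maximal value it is roughly $2.97$. So the bound $Y_N\leq G_0+2W$ you invoke to reach the constant $2$ in the statement does not follow from what you actually derived. The paper sidesteps this because its argument never isolates the last step: it estimates the full telescopic sum $\sum_{i} G_{\leq t_{i+1}}$ by $e^{k+1}H_{\leq T}$ and then rescales by $e^{-(k-1)}=\exp(-\omega_1(0,t_{k-1})/(\alpha L))$, where $t_{k-1}$ is the \emph{left} endpoint of the interval containing $t$ and hence always satisfies $\omega_1(0,t_{k-1})=\alpha L(k-1)$ exactly; the product $e^{k+1}e^{-(k-1)}=e^2$ is uniform in $k$ and the boundary effect disappears in the single absorption $Ce^2(\alpha L)^{1/\kappa}H_{\leq T}\leq\tfrac12 H_{\leq T}$. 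To repair your version you would need either to revert to a self-referencing sup as in the paper, or to account for the boundary step at the cost of a worse numerical constant.
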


\rmk{\begin{remark}
We are aware that similar Gronwall-type properties have already been used in the rough or fractional literature, especially when dealing with linear problems (see for instance \cite[Theorem 3.1 (ii)]{hu-nualart-2007} or \cite[Section 10.7]{FV}). Nevertheless, we have found it important to have a clear statement of this result at our disposal, and we will refer to it several times in the sequel.
\end{remark}}

\begin{proof}
Let us successively set, for every $t\in [0,T]$,
\begin{equation}\label{eq:def-sup-g-h}
G_{\leq t}:=\sup_{0\leq s\leq t} G_s  , \quad H_t:=G_{\leq t} \exp\Big(-\frac{\omega_1(0,t)}{\al L}\Big) \quad \text{and} \quad H_{\leq t}:=\sup_{0\leq s\leq t} H_s .
\end{equation}
Also, let us denote by $K$ the integer such that $\al L (K-1)\leq \omega_1(0,T) \leq \al L K$, and define a set of times $t_0<t_1 <\cdots <t_K$ as follows: $t_0:=0$, $t_K:=T$ and for $k\in \{1,\ldots,K-1\}$, $t_k$ is such that $\omega_1(0,t_k)=\al Lk$. In particular, $\omega_1(t_k,t_{k+1})\leq \al L\leq L$ (recall that we have chosen $\al\leq 1$ in~\eqref{eq:def-alpha}).
Fix $t\in [t_{k-1},t_k]$, for some $k\in \{1,\ldots,K\}$.
 We start from the trivial decomposition
\begin{equation*}
(\delta G)_{0t} = \sum_{i=0}^{k-2} (\delta G)_{t_it_{i+1}}+(\delta G)_{t_{k-1}t}.
\end{equation*}
Then on each interval $[t_i,t_{i+1}]$ one can apply the a priori bound \eqref{eq:a-priori-g}. Taking into account the facts that $\omega_1(t_k,t_{k+1})\leq \al L$ and that $\om_{2}$ is a super-additive function, we get
\begin{eqnarray}\label{proof-gron-1}
(\delta G)_{0t} 
&\leq& 
C (\al L)^{\frac{1}{\kappa}} \sum_{i=0}^{k-2} G_{\leq t_{i+1}}+\omega_2(0,t_{k-1})+C (\al L)^{\frac{1}{\kappa}} G_{\leq t}+\omega_2(t_{k-1},t) \notag \\ 
&\leq&  C (\al L)^{\frac{1}{\kappa}} \sum_{i=0}^{k-1} G_{\leq t_{i+1}}+\omega_2(0,t) .
\end{eqnarray}
Let us bound the term $\sum_{i=0}^{k-1} G_{\leq t_{i+1}}$ above. According to our definitions \eqref{eq:def-sup-g-h}, we have 
\begin{equation}\label{proof-gron-2}
\sum_{i=0}^{k-1} G_{\leq t_{i+1}}=\sum_{i=0}^{k-1} H_{t_{i+1}} \exp\Big( \frac{\omega_1(0,t_{i+1})}{\al L}\Big) \leq H_{\leq T} \sum_{i=0}^{k-1} \exp(i+1) \leq  \exp(k+1)\, H_{\leq T}  ,
\end{equation}
where we have used the  fact that $\om_{1}(0,t_{i+1})\leq \al L (i+1)$ for the first inequality.
Combining~(\ref{proof-gron-1}) and (\ref{proof-gron-2}), we thus get that
$$G_{\leq t} \leq G_0+\omega_2(0,t)+ C (\al L)^{\frac{1}{\kappa}} \exp(k+1)\, H_{\leq T}  .$$
Now,
\begin{eqnarray*}
H_t &=& G_{\leq t} \exp\Big(-\frac{\omega_1(0,t)}{\al L} \Big)\\
&\leq& \{G_0+\omega_2(0,t)\} \exp\Big(-\frac{\omega_1(0,t)}{\al L} \Big)+ C (\al L)^{\frac{1}{\kappa}} \exp(k+1)\, H_{\leq T}\exp\Big(-\frac{\omega_1(0,t_{k-1})}{\al L} \Big) ,
\end{eqnarray*}
and since $\om(0,t_{k-1})= \al L (k-1)$, we end up with:
\begin{equation*}
H_t  \leq
G_0+\sup_{0\leq s\leq T}\Big(\omega_2(0,s) \, \exp\Big(-\frac{\omega_1(0,s)}{\al L}\Big)\Big)+ Ce^2 (\al L)^{\frac{1}{\kappa}} H_{\leq T} .
\end{equation*}
By taking the supremum over $t\in [0,T]$, we deduce that
$$H_{\leq T} \leq Ce^2 (\al L)^{\frac{1}{\kappa}} H_{\leq T}+G_0+\sup_{0\leq s\leq T}\Big(\omega_2(0,s) \, \exp\Big(-\frac{\omega_1(0,s)}{\al L}\Big)\Big)$$
and recalling the definition \eqref{eq:def-alpha} of $\al$, it entails that 
$$H_{\leq T} \leq 2G_0+2\sup_{0\leq s\leq T}\Big(\omega_2(0,s) \, \exp\Big(-\frac{\omega_1(0,s)}{\al L}\Big)\Big)  .$$
The conclusion is now immediate, since
$$G_{\leq T}=\exp\Big( \frac{\omega_1(0,T)}{\al L}\Big) H_T \leq \exp\Big( \frac{\omega_1(0,T)}{\al L}\Big) H_{\leq T}  .$$
\end{proof}

\subsection{\rmk{A first application: a priori estimates for a (rough) heat model}}\label{subsec:first-application}

\rmk{As a conclusion to this section, we would like to give an example of the possibilities offered by the combination of the two previous results (Theorem \ref{theo:apriori} and Lemma \ref{lemma:basic-rough-gronwall}), through an application to the linear heat equation (\ref{eq:heat}).}

\rmk{Note that this rough parabolic model (when $z$ stands for a $p$-rough path, wih $p\in [2,3)$) has already been considered in the literature (see for instance \cite{MR2765508}). Our aim here is not to provide a full treatment of the equation (which would certainly overlap existing wellposedness results), but only to illustrate some of the main ideas of our apprach, before we turn to the more sophisticated conservation-law model.}

\rmk{To be more specific, let us focus on proving a uniform energy estimate for the approximation of (\ref{eq:heat}), that is the sequence of (classical) equations
\begin{align}\label{eq:heat-approx}
\begin{aligned}
\dd u^\ep &=\Delta u^\ep\, \dd t + V\cdot \nabla u^\ep \,\dd z^\ep \ ,\qquad x\in\R^N \ ,\ t\in(0,T)\ ,\\
u^\ep_0&=u_0\in L^2(\R^N) \ ,
\end{aligned}
\end{align}
where $(z^\ep)_{\ep\in (0,1)}$ is a sequence of smooth paths that converges to a continuous (weak geometric) $p$-rough path $\mathbf{Z}=(Z^1,Z^2)$ (for some fixed $p\in [2,3)$), i.e. $\mathbf{Z}^\ep:=\Big(\delta z^\ep,\int \delta z^\ep \otimes \dd z^\ep\Big)  \to \mathbf{Z}$ as $\ep\to 0$ (say for the uniform topology). We also assume that
\begin{equation}\label{uniform-assumption-z}
\sup_{\ep >0} \Big\{ \big| Z^{1,\ep}_{st}\big|^p +\big| Z^{2,\ep}_{st}\big|^{\frac{p}{2}} \Big\} \leq \omega_{\mathbf{Z}}(s,t) \ ,
\end{equation}
for some regular control $\omega_{\mathbf{Z}}$. Note that, according to \cite[Proposition 8.12]{FV}, such a sequence $(z^\ep)$ can for instance be obtained through the geodesic approximation of $\mathbf{Z}$.} 

\smallskip

\rmk{Before we turn to a suitable \enquote{rough} treatment of (\ref{eq:heat-approx}), and for pedagogical purpose, let us briefly recall how the basic energy estimate is derived in the classical smooth case. In that situation, one formally tests \eqref{eq:heat-approx} by $\varphi=u^\ep_t$ and integration by parts leads to 
\begin{align}
 \|u^\ep_t\|_{L^2}^2+2\int_0^t\|\nabla u^\ep_r\|_{L^2}^2\,\dd r &=\|u_0\|_{L^2}^2+\int_0^t (u^\ep)^2_r(\diver V) \,\dd z^\ep_r\notag\\
 &\leq \|u_0\|_{L^2}^2+\|V\|_{W^{1,\infty}}\int_0^t \|u^\ep_r\|_{L^2}^2\,\dd |z^\ep_r| \ . \label{estim-toy}
\end{align}
Hence the (classical) Gronwall lemma applies and we obtain
$$ \|u^\ep_t\|_{L^2}^2+2\int_0^t\|\nabla u^\ep_r\|_{L^2}^2\,\dd r \leq \mathrm{e}^{\|V\|_{W^{1,\infty}}\|z^\ep\|_{1\text{-var}}}\|u_0\|_{L^2}^2 \ .$$}

\rmk{In the rough setting, these two elementary steps (namely, the estimate (\ref{estim-toy}) and then the use of the Gronwall lemma) will somehow be replaced with their rough counterpart: first, the a priori estimate provided by Theorem \ref{theo:apriori}, then the Rough Gronwall Lemma \ref{lemma:basic-rough-gronwall}.}

\rmk{In order to implement this strategy, consider the path $v^\ep:=(u^\ep)^2$, and observe that, for fixed $\ep>0$, this path is (trivially) governed by the dynamics
$$\dd v^\ep =2u^\ep\Delta u^\ep\, \dd t + V\cdot \nabla v^\ep \,\dd z^\ep \ .$$
Expanding the latter equation in its weak form (just as in \ref{eq:he1}) easily entails that for any test-function $\vp\in W^{3,\infty}(\R^N)$,
\begin{equation}\label{u2}
\delta  v^\ep(\varphi )_{s t} =(\delta\mu^\ep)_{st}(\vp)+ v^\ep_{s} (A^{1,\ep,\ast}_{s t} \varphi ) +  v^\ep_{s} (A^{2,\ep, \ast}_{s t} \varphi
     ) + v^{\ep,\natural}_{s t}( \varphi ) \ ,
\end{equation}
where the finite variation term $\mu^{\ep}$ is given by:
\begin{equation*}
(\delta\mu^\ep)_{st}(\vp):=-2\int_s^t \int_{\R^N} |\nabla u^\ep_r|^2 \vp\,\dd x\,\dd r-2\int_s^t\int_{\R^N} \nabla u^\ep_r \cdot\nabla\vp \,u^\ep_r\,\dd x\,\dd r \ ,
\end{equation*}
and where $A^{1,\ep,\ast},A^{2,\ep,\ast}$ are defined along (\ref{eq:def-A-star})-(\ref{eq:def-A-star-bis}) (by replacing $z$ with $z^\ep$). Eventually, the term $v^{\ep,\natural}$ in \eqref{u2} stands for some new \enquote{third-order} remainder acting on $W^{3,\infty}(\R^N)$.}

\rmk{Equation (\ref{u2}) is actually the starting point of our analysis, that is the equation to which we intend to apply the a priori estimate of Theorem \ref{theo:apriori} (or more simply Corollary \ref{cor:apriori} in this case). To this end, and as anticipated above, we consider the scale $E_n:=W^{n,\infty}(\R^N)$ ($0\leq n\leq 3$). The construction of a smoothing on this scale (in the sense of Definition~\ref{def:smoothing}) is an easy task: consider indeed any smooth, compactly-supported and rotation-invariant function $\jmath$ on $\R^N$ such that $\int_{\R^N} \jmath(x) \dd x=1$, and define $J^\eta$ as a convolution operator, that is 
\begin{equation}\label{smoothing-convolution}
J^\eta \vp (x):=\int_{\R^N} \jmath_\eta(x-y) \vp(y)\dd y\ , \quad \text{with} \ \  \jmath_\eta(x):=\eta^{-N} \jmath(\eta^{-1}x)\ .
\end{equation}
Checking conditions (\ref{eq:reg-J-eta-1})-(\ref{eq:reg-J-eta-2}) is then a matter of elementary computations, which we leave to the reader as an exercise.}

\rmk{As far as the drift term $\mu^\ep$ is concerned, observe that it can be estimated as
\begin{align}\label{eq:estim-mu-heat}
|(\delta\mu^\ep)_{st}(\vp)|\lesssim \bigg(\int_s^t \|\nabla u^\ep_r\|_{L^2}^2 \,\dd r\bigg)\|\vp\|_{L^\infty}+\bigg(\int_s^t\|\nabla u^\ep_r\|_{L^2}^2\,\dd r \bigg)^\frac{1}{2}\bigg(\int_s^t\| u^\ep_r\|_{L^2}^2\,\dd r \bigg)^\frac{1}{2}\|\vp\|_{W^{1,\infty}}\ ,
\end{align}
and hence the assumption \eqref{cond-mu-cor} holds true for the control given by
$$
\omega_{\mu^\ep}(s,t):=\int_s^t \|\nabla u^\ep_r\|_{L^2}^2 \,\dd r+\bigg(\int_s^t\|\nabla u^\ep_r\|_{L^2}^2\,\dd r \bigg)^\frac{1}{2}\bigg(\int_s^t\| u^\ep_r\|_{L^2}^2\,\dd r \bigg)^\frac{1}{2} \ .
$$
We are thus in a position to apply Corollary \ref{cor:apriori} and deduce the existence of a constant $L>0$ (independent of $\ep$) such that on any interval  $I\subset [0,T]$ satisfying $\omega_{{\bf Z}}(I)\leq L$, one has
\begin{equation}\label{bou-ep-nat}
\| v^{\ep,\natural}_{st}\|_{E_{-3}} \lesssim \, \cn[v^\ep;L^\infty(s,t;E_{-0})]\,\omega_{{\bf Z}}(s,t)^{\frac{3}{p}}+\omega_{\mu^\ep}(s,t) \omega_{{\bf Z}}(s,t)^{\frac{3-p}{p}}\quad (s<t\in I) \ ,
\end{equation}
for some proportional constant independent of $\ep$ (due to (\ref{uniform-assumption-z})).}

\rmk{In order to exploit the (non-uniform) bound (\ref{bou-ep-nat}), let us go back to (\ref{u2}) and apply the equation to the trivial test-function $\vp=1 \in E_3$, which immediately leads to
\begin{eqnarray*}
\lefteqn{(\delta\|u^\ep\|_{L^2}^2)_{st}+2\int_s^t\|\nabla u^\ep_r\|_{L^2}^2\,\dd r}\\
& = & v^\ep_s(A^{1,*}_{st}1)+v^\ep_s(A^{2,*}_{st}1)+v^{\ep,\natural}_{0 t}( 1 )\\
&\lesssim & \|u^\ep_s\|_{L^2}^2 \,\omega_{{\bf Z}}(s,t)^\frac1p+|v^{\ep,\natural}_{st}(1)|\\
&\lesssim &\sup_{s\leq r\leq t}\|u^\ep_r\|_{L^2}^2\bigg[\omega_{{\bf Z}}(s,t)^{\frac{1}{p}}+|t-s|\,\omega_{{\bf Z}}(s,t)^{\frac{3-p}{p}}\bigg]+\bigg(\int_s^t \|\nabla u^\ep_r\|_{L^2}^2 \,\dd r\bigg) \, \omega_{{\bf Z}}(s,t)^{\frac{3-p}{p}}\\
&\lesssim &\bigg[ \sup_{s\leq r\leq t}\|u^\ep_r\|_{L^2}^2 +\int_s^t \|\nabla u^\ep_r\|_{L^2}^2 \,\dd r\bigg]\,\omega_{{\bf Z}}(s,t)^{\frac{3-p}{p}}\ ,
\end{eqnarray*}
for all $s<t$ in a sufficiently small interval.}

\rmk{At this point, we are (morally) in the same position as in (\ref{estim-toy}). By applying our second main technical tool, namely the rough Gronwall Lemma \ref{lemma:basic-rough-gronwall}, with
$$G^\ep_t:=\|u^\ep_t\|_{L^2}^2+2\int_0^t \|\nabla u^\ep_r\|_{L^2}^2\,\dd r \quad , \quad\omega_1:=\omega_{{\bf Z}} \quad , \quad \omega_2:=0 \ ,$$
we finally obtain the desired uniform estimate
\begin{align}\label{estim-toy-approx}
\sup_{0\leq t\leq T}\|u^\ep_t\|_{L^2}^2+\int_0^T\|\nabla u^\ep_t\|_{L^2}^2\,\dd t\lesssim \exp\Big( \frac{\omega_1(0,T)}{\al L}\Big) \, \|u_0\|_{L^2}^2 \ ,
\end{align}
for some proportional constant independent of $\ep$.}

\rmk{\begin{remark}
Starting from the uniform a priori estimate (\ref{estim-toy-approx}), one could certainly settle a compactness argument and deduce the existence of a solution for the rough extension of equation (\ref{eq:heat-approx}) (interpreted through Definition \ref{def:gen}). But again, our aim here is not to give a full treatment of this heat-equation example, and we refer the reader to Section \ref{sec:existence} for more details on such a compactness argument in the (more interesting) rough conservation-law case.
\end{remark}}

\section{Tensorization and uniqueness}
\label{sec:uniq1}

\rmk{We now turn to the sketch of a strategy towards uniqueness for the general rough PDE (\ref{eq:gen}), understood in the sense of Definition \ref{def:gen}. These ideas will then be carefully implemented in the next sections for the rough conservation-law model.}

\subsection{Preliminary discussion}
\label{subsec:prelim}

\rmk{Let us  go back to the model treated in Section \ref{subsec:first-application} and recall that one of the key points of our strategy regarding (\ref{eq:heat-approx}) (and ultimately leading to (\ref{estim-toy-approx})) was the derivation of the equation satisfied by the squared-path $v^\ep=(u^\ep)^2$. Observe in particular that if (\ref{u2}) were to be true at the rough level, that is above the rough path $\mathbf{Z}=(Z^1,Z^2)$ and not only above its approximation $z^\ep$ (with $u$ accordingly replacing $u^\ep$), then the very same arguments could be used to show that estimate (\ref{estim-toy-approx}) actually holds true for {\it any} solution of the underlying rough equation. The desired uniqueness property for this equation would immediately follow, due to the linearity of the problem.} 

\smallskip

\rmk{Unfortunately, when working directly at the rough level, establishing such an equation for the squared-path $u^2$ turns out to be a complicated exercise, due to the fact that $u$ cannot be considered as a test function anymore. Therefore new ideas are required for the uniqueness result. In fact, let us consider the following more general formulation of the problem (which will also encompass our strategy toward uniqueness for the conservation-law model): if $u$, resp. $v$,  is a (functional-valued) solution of the rough equation
\begin{equation}\label{eq:gen-tens}
\dd u_{t} = \mu(\dd t)+ {\bf A}(\dd t) u_{t}  \quad , \quad \text{resp.} \quad \dd v_{t} = \nu(\dd t)+ {\bf A}(\dd t) v_{t}  \ ,
\end{equation}
for a same unbounded rough driver ${\bf A}$ (but possibly different drift terms $\mu,\nu$), then what is the equation satisfied by the product $uv$?}

\rmk{In order to answer this question (at least in some particular situations), we shall follow the ideas of \cite{BG} and rely on a tensorization argument}, together with a refined analysis of the approximation error. To be more specific, starting from \eqref{eq:gen-tens}, we exhibit first the equation for the tensor product of distributions \rmk{$U(x,y):=(u\otimes v)(x,y)=u(x)v(y)$.} Namely, write
\rmk{\begin{equation}\label{eq:tensor-heat-1}
\delta U_{st} = \delta u_{st} \otimes v_{s} +  u_{s} \otimes \delta v_{st} 
+ \delta u_{st} \otimes \delta v_{st} \ ,
\end{equation}
and then expand the increments $\delta u_{st},\delta v_{st}$ along \eqref{eq:he1}, which, at a formal level, yields the decomposition
\begin{equation}\label{eq:tensor}
\delta U_{st}=\delta M_{st} + \Gamma^1_{st}U_{s}+\Gamma^2_{st}U_{s}+U^{\natural}_{st}\ ,
\end{equation}
where the finite variation term $M$ can be expressed as:
\begin{equation}
M_t :=\int_{[0,t]} \mu_{dr}\otimes v_r+\int_{[0,t]} u_r\otimes \nu_{dr} \ ,
\end{equation}
and where the tensorized rough drivers $\Gamma^{1},\Gamma^{2}$ are given by:
\begin{equation}\label{driver:tens}
 \Gamma^1_{st}:=A^1_{st}\otimes\mathbb{I}+\mathbb{I}\otimes A^1_{st}\qquad ,\qquad\Gamma^2_{st}:=A^2_{st}\otimes\mathbb{I}+\mathbb{I}\otimes A^2_{st}+A^1_{st}\otimes A^1_{st}\ .
\end{equation}
In equations \eqref{eq:tensor} and \eqref{driver:tens},} $\mathbb{I}$ denotes the identity map and $U^{\natural}$ is a remainder when tested with smooth functions of the two variables. 

\rmk{ An easy but important observation is that Chen's relation~\eqref{eq:chen-relation} is again satisfied by the components of $\mathbf{\Gamma}:=(\Gamma^1,\Gamma^2)$. Equation \eqref{eq:tensor} still fits the pattern of (\ref{eq:gen2}), and is therefore likely to be treated with the same tools as the original equations (\ref{eq:gen-tens}), that is along our a priori estimate strategy.}

\smallskip

\rmk{Our goal then is to test the tensorized equation (\ref{eq:tensor}) against functions of the form
\begin{equation}\label{testf}
\Phi_\varepsilon(x,y)=\ep^{-N}\vp\left(\frac{x+y}{2}\right)\psi(\ep^{-1}(x-y)) \ ,
\end{equation}
and try to derive, with the help of Theorem \ref{theo:apriori}, an $\ep$-uniform estimate for the resulting expression. Such an estimate should indeed allow us to pass to the limit as $\varepsilon\to0$ (or in other words, to \enquote{pass to the diagonal}) and obtain the desired equation for $uv$.}
To this end, we consider the blow-up transformation $T_\varepsilon$ defined on test functions as 
\begin{equation}\label{blow-up-trans}
T_\varepsilon \Phi(x,y) :=\varepsilon^{-N} \Phi\big(x_+ + \frac{x_-}{\varep}, x_+ - \frac{x_-}{\varep}\big) \ ,
\end{equation}
where $x_{\pm}=\frac{x\pm y}{2}$ are the coordinates parallel and transverse to the diagonal.
Note that its adjoint for the $L^2$-inner product reads
\begin{equation}\label{eq:def-T-eps-star}
T^*_\varepsilon \Phi (x, y) = \Phi (x_+ +\varepsilon x_- , x_+ -\varepsilon x_-)
\end{equation}
and its inverse is given by
\begin{equation}\label{eq:def-T-eps-inverse}
T^{-1}_\varepsilon\Phi(x,y)=\varepsilon^N\Phi(x_++\varepsilon x_-,x_+-\varepsilon x_-) \ .
\end{equation}
\rmk{Setting
$$
U^\varepsilon := T^{\ast}_\varepsilon U  \quad , \quad \mathbf{\Gamma}^{\ast}_{\varepsilon} := T_\varepsilon^{-1}\mathbf{\Gamma}^{\ast}T_\varepsilon \quad , \quad M^\varepsilon := T^{\ast}_\varepsilon M  \quad \text{and}\quad  U^{\natural,\varepsilon}:=T_\varepsilon^* U^\natural \ ,$$
the tensorized equation \eqref{eq:tensor} readily turns into 
\begin{equation}\label{eq:U-eps}
\delta U^\varepsilon_{st}( \Phi) =\delta M_{st}^\varepsilon ( \Phi) +U_s^\varepsilon (( \Gamma_{\varepsilon,st}^{1,\ast} + \Gamma_{\varepsilon,st}^{2,\ast}) \Phi)  +  U^{ \natural,\varepsilon}_{st}(\Phi) \ .
\end{equation}
Applying (\ref{eq:U-eps}) to the test-function $\Phi(x,y)=\vp(x_+) \psi(2x_-)$ then corresponds to applying~\eqref{eq:tensor} to the test-function $\Phi_{\varepsilon}$ defined by (\ref{testf}).}

\rmk{With these notations in mind, our search of an $\ep$-uniform estimate for $U^{ \natural,\varepsilon}$ (via an application of Theorem \ref{theo:apriori}) will naturally give rise to the central notion of {\it renormalizability} for the driver under consideration.}

\subsection{Renormalizable drivers}\label{sec:renormalizable-drivers}

Let us fix $p\in [2,3]$ for the rest of the section. \rmk{Motivated by the considerations of Section \ref{subsec:prelim}, we will now define and illustrate the concept of renormalizable rough driver.}

\begin{definition}[Renormalizable driver]\label{def-renorm-tensor}
Let $\bfA$ be a continuous unbounded $p$-rough driver \rmk{acting on $C^\infty_c(\R^N )$}, and $\mathbf{\Gamma}$ its tensorization defined by \eqref{driver:tens} and acting on $C^\infty_c(\R^N \times \R^N)$. We say that $\bfA$ is renormalizable in \rmk{a} scale of spaces \rmk{$(\mathcal E_n)_{0\leq n\leq 3}$}  if $\{\mathbf{\Gamma}_\varepsilon\}_{\varepsilon\in(0,1)}$ can be extended to a bounded family \rmk{(with respect to $\ep$)} of continuous unbounded $p$-rough drivers on this scale.
\end{definition}

\rmk{\begin{remark}
Although it is inspired by the equation (\ref{eq:U-eps}) governing $U^\ep$, this definition only depends on the driver $\bfA$ and not on the drift terms $\mu,\nu$ involved in (\ref{eq:gen-tens}).  
\end{remark}}

\rmk{\begin{remark}
In the context of transport-type rough drivers, the renormalization property corresponds to the fact that a commutator lemma argument in the sense of DiPerna-Lions \cite{MR1022305} can be performed.
\end{remark}}

\rmk{For a clear illustration of this property, let us slightly anticipate the next sections and consider the case of the driver that will govern our rough conservation-law model, namely $\bfA=(A^1,A^2)$ with
\begin{equation}\label{driver-tens-sec}
A^1_{st}u:=Z^{1,k}_{st} \,V^k \cdot\nabla u\ ,\qquad A^2_{s t}u:= Z^{2,jk}_{st}\, V^k\cdot\nabla (V^j\cdot\nabla u) \ ,
\end{equation} 
or equivalently
\begin{equation}\label{eq:def-A-star-tens-sec}
A_{st}^{1,\ast}\vp
=- Z_{st}^{1,k} \, \text{div}( V^{k}\vp )
\quad\text{and}\quad
A_{st}^{2,\ast}\vp
= Z_{st}^{2,jk} \, \text{div}( V^{j} \text{div}( V^{k}\vp )) \ ,
\end{equation}
for a given $p$-rough path $\mathbf{Z}=(Z^1,Z^2)$ in $\R^K$ and a family of vector fields $V=(V^1,\dots,V^K)$ on $\R^N$. Observe that this driver was already at the core of the heat-equation model evoked in Section \ref{sec:urd} (or in Section \ref{subsec:first-application}).}

\begin{proposition}\label{prop:renorm}
\rmk{Let $\mathbf{A}$ be the continuous unbounded $p$-rough driver defined by \eqref{driver-tens-sec}, and assume that $V\in W^{3,\infty}(\R^N)$. Then, for every $1\leq R\leq \infty$, $\mathbf{A}$ is a renormalizable driver in the scale $(\mathcal E_{R,n})_{0\leq n\leq 3}$ given by
\begin{equation}\label{scale-e-r-n}
\mathcal E_{R,n}:=\left\{ \Phi\in W^{n,\infty}(\R^{N}\times\R^{N});\,\Phi({x},{y})=0\;\text{if}\;\rho_R({x},{y})\ge 1 \right\} \ ,
\end{equation}
where $\rho_R( x, y)^2 = |{x}_+|^2/R^2 +  |{x}_-|^2$, and equipped with the subspace topology of $W^{n,\infty}$.
Besides, it holds that
\begin{equation}\label{prop:renorm-1}
\lVert \Gamma^{1}_{\ep,st}\rVert_{\cl(\mathcal E_{R,n},\mathcal E_{R,n-1})}^p \lesssim_{\|V\|_{W^{3,\infty}}} \omega_Z(s,t)\ ,\qquad n\in \{-0,-2\}\ ,
\end{equation}
\begin{equation}\label{prop:renorm-2}
\lVert \Gamma^{2}_{\ep,st}\rVert_{\cl(\mathcal E_{R,n},\mathcal E_{R,n-2})}^{p/2} \lesssim_{\|V\|_{W^{3,\infty}}} \omega_Z(s,t)\ ,\qquad n\in \{-0,-1\} \ ,
\end{equation}
for some proportional constants independent of both $\ep$ and $R$.}
\end{proposition}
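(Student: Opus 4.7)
The plan is built around the observation that the tensorized driver $\mathbf{\Gamma}$ can be rewritten in a \emph{transport-type} form on $\R^{2N}$ by introducing the vector fields $W^{k}(x,y):=(V^{k}(x),V^{k}(y))$, so that $W^{k}\cdot\nabla=V^{k}(x)\cdot\nabla_{x}+V^{k}(y)\cdot\nabla_{y}$. Clearly $\Gamma^{1}_{st}=Z^{1,k}_{st}\,W^{k}\cdot\nabla$. The key algebraic identity, which relies on the geometric rough-path relation $Z^{1,j}_{st}Z^{1,k}_{st}=Z^{2,jk}_{st}+Z^{2,kj}_{st}$ from Definition~\ref{def-p-rough-path}, is that
\[
\Gamma^{2}_{st}\;=\;Z^{2,jk}_{st}\,W^{k}\cdot\nabla\bigl(W^{j}\cdot\nabla\,\cdot\bigr).
\]
Indeed, expanding the right-hand side into diagonal ($xx$ and $yy$) and cross ($xy$ and $yx$) contributions and symmetrizing the cross terms via the Chen-type identity above recovers exactly $A^{2}_{st}\otimes\mathbb{I}+\mathbb{I}\otimes A^{2}_{st}+A^{1}_{st}\otimes A^{1}_{st}$, as in \eqref{driver:tens}. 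Consequently $\mathbf{\Gamma}$ has the same structure as $\mathbf{A}$ with $V$ replaced by $W$, and Chen's relation for $\mathbf{\Gamma}$ reduces to Chen's relation for $\mathbf{Z}$.

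The next step is to compute the $T_{\varepsilon}$-conjugate of $W^{k}\cdot\nabla$. In the coordinates $(x_{+},x_{-})=\bigl(\tfrac{x+y}{2},\tfrac{x-y}{2}\bigr)$, writing $\partial_{\pm}$ for the corresponding derivatives, we have $\partial_{x}=\tfrac12(\partial_{+}+\partial_{-})$ and $\partial_{y}=\tfrac12(\partial_{+}-\partial_{-})$, so that
\[
W^{k}\cdot\nabla\;=\;\tfrac{V^{k}(x_{+}+x_{-})+V^{k}(x_{+}-x_{-})}{2}\,\partial_{+}\;+\;\tfrac{V^{k}(x_{+}+x_{-})-V^{k}(x_{+}-x_{-})}{2}\,\partial_{-}.
\]
Conjugation by $T_{\varepsilon}$ amounts to substituting $x_{-}\mapsto\varepsilon x_{-}$ in the coefficients and rescaling $\partial_{-}\mapsto\varepsilon^{-1}\partial_{-}$, giving
\[
T_{\varepsilon}^{-1}(W^{k}\cdot\nabla)T_{\varepsilon}\;=\;a^{k}_{\varepsilon}(x_{+},x_{-})\,\partial_{+}\;+\;b^{k}_{\varepsilon}(x_{+},x_{-})\,\partial_{-},
\]
with $a^{k}_{\varepsilon}=\tfrac12[V^{k}(x_{+}+\varepsilon x_{-})+V^{k}(x_{+}-\varepsilon x_{-})]$ and, crucially,
\[
b^{k}_{\varepsilon}(x_{+},x_{-})\;=\;\frac{V^{k}(x_{+}+\varepsilon x_{-})-V^{k}(x_{+}-\varepsilon x_{-})}{2\varepsilon}\;=\;\tfrac12\int_{-1}^{1}x_{-}\cdot\nabla V^{k}(x_{+}+s\varepsilon x_{-})\,ds.
\]
Since $|x_{-}|\le1$ on the support of any $\Phi\in\mathcal E_{R,n}$, the apparent $\varepsilon^{-1}$ singularity is absorbed and $(a^{k}_{\varepsilon},b^{k}_{\varepsilon})$ belong to $W^{2,\infty}(\R^{2N})$ uniformly in $\varepsilon\in(0,1)$ and $R\ge1$, with norms controlled by $\|V\|_{W^{3,\infty}}$.

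From here the claimed bounds follow by standard estimates for differential operators with Sobolev coefficients, acting between the spaces $\mathcal E_{R,n}$. By duality, $\Gamma^{1,\ast}_{\varepsilon,st}$ is a first-order operator with $W^{2,\infty}$-coefficients; hence it maps $\mathcal E_{R,n+1}$ into $\mathcal E_{R,n}$ for $n\in\{0,2\}$ with norm $\lesssim_{\|V\|_{W^{3,\infty}}}|Z^{1}_{st}|$, yielding~\eqref{prop:renorm-1}. Similarly $\Gamma^{2,\ast}_{\varepsilon,st}$ is the composition of two such first-order operators, thus a second-order operator whose principal-part coefficients are products of $a^{k}_{\varepsilon},b^{j}_{\varepsilon}$ and whose first-order coefficients involve one further derivative of those; all lie in $W^{1,\infty}$ uniformly in $\varepsilon,R$, so $\Gamma^{2,\ast}_{\varepsilon,st}$ maps $\mathcal E_{R,n+2}$ into $\mathcal E_{R,n}$ for $n\in\{0,1\}$ with norm $\lesssim_{\|V\|_{W^{3,\infty}}}|Z^{2}_{st}|$, yielding~\eqref{prop:renorm-2}. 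Chen's relation $\delta\Gamma^{2}_{\varepsilon,sut}=\Gamma^{1}_{\varepsilon,ut}\Gamma^{1}_{\varepsilon,su}$ is then immediate from the conjugation-invariance of composition together with the corresponding relation for $\mathbf{Z}$.

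The main obstacle is the first step: without the geometric identity $Z^{1,j}Z^{1,k}=Z^{2,jk}+Z^{2,kj}$, the cross term $A^{1}_{st}\otimes A^{1}_{st}$ would produce, after $T_{\varepsilon}$-conjugation, a genuine $\varepsilon^{-2}\partial_{-}^{2}$ singularity in the $\partial_{x}\partial_{y}=\tfrac14(\partial_{+}^{2}-\partial_{-}^{2})$ component that the diagonal pieces $A^{2}_{st}\otimes\mathbb{I}$ and $\mathbb{I}\otimes A^{2}_{st}$ cannot separately absorb. Recasting $\Gamma^{2}$ as the composition $Z^{2,jk}W^{k}\cdot\nabla(W^{j}\cdot\nabla\,\cdot)$ trades this problematic $\varepsilon^{-2}$ term for two first-order operators whose conjugated coefficients are uniformly controlled thanks to $V\in W^{3,\infty}$, and it is this regrouping that makes the whole renormalizability argument possible.
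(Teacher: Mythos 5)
Your proof is correct and follows essentially the same route as the paper: both factor $\Gamma^{2}$ (equivalently $\Gamma^{2,*}_{\varepsilon}$) as a composition of two first-order operators of the form $W^k\cdot\nabla$ (the paper writes this as $\Gamma^{2,*}_{\varepsilon,st}=Z^{2,ij}_{st}\Gamma^{1,j,*}_{V,\varepsilon}\Gamma^{1,i,*}_{V,\varepsilon}$), then Taylor-expand the $\varepsilon^{-1}V^-_\varepsilon$ coefficient and use the support condition $|x_-|\leq 1$ to absorb the singular factor. Your write-up is in fact slightly more complete in one respect: you explicitly derive the factorization $\Gamma^2_{st}=Z^{2,jk}_{st}\,W^k\cdot\nabla(W^j\cdot\nabla\,\cdot)$ from the weak-geometric identity $Z^{2,jk}+Z^{2,kj}=Z^{1,j}Z^{1,k}$, a step the paper simply asserts; the only minor imprecision is the claim that $b^k_\varepsilon\in W^{2,\infty}(\R^{2N})$ globally, which holds only on $\{|x_-|\leq 1\}$, but that is exactly the region that matters given the support constraint on $\mathcal E_{R,n}$.
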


\begin{remark}
The support condition in the definition of spaces $\mathcal E_{R,n}$ \rmk{implies in particular that the test functions are compactly supported in the $x_-$ direction. \rmk{This localization is a} key point in the proof below. As we mentionned it above, the test functions we are ultimately interested in are those of the form \eqref{testf}, i.e. the dependence on $x_-$ is only in the mollifier $\psi$, which can indeed be taken compactly supported.}
\end{remark}

\begin{remark}\label{rk:local-x-plus}
\rmk{In the subsequent conservation-law model, the possibility of a specific localization in the $x_+$ direction (as offered by the additional parameter $R\geq 1$) will turn out to be an important technical tool when looking for suitable estimates of $U^{\ep}$ and $M^{\ep}$ (along the notations of (\ref{eq:U-eps})), as detailled in Sections \ref{subsec:f-ep}-\ref{subsec:q-ep}.}
\end{remark}

\begin{proof}[Proof of Proposition \ref{prop:renorm}]
Recall that the driver $\mathbf{\Gamma}$ was defined in \eqref{driver:tens}, and that $A_{st}^{1,\ast}, A_{st}^{2,\ast}$ are defined by \eqref{eq:def-A-star-tens-sec}. Therefore, the driver $\mathbf{\Gamma}_{\varepsilon}$ which was defined by $\mathbf{\Gamma}_{\varepsilon}^*=T_\ep^{-1}\mathbf{\Gamma}^*T_\ep$ can be written as
$$\Gamma^{1,*}_{\varepsilon,st}=Z_{st}^{1,i}\Gamma^{1,i,*}_{V,\ep},\qquad\Gamma^{2,*}_{\varepsilon,st}=Z_{st}^{2,ij}\Gamma^{1,j,*}_{V,\ep}\Gamma^{1,i,*}_{V,\ep},$$
where 
\[ \Gamma_{V, \varepsilon }^{1,*} := -
   V^{+}_{\varepsilon} \cdot\nabla^{+} - \varepsilon^{-1} V^{-}_{\varepsilon}
   \cdot\nabla^{-} -D^{+}_{\varepsilon}\ . 
\]
We have here used the notation
$$\nabla^\pm:=\frac{1}{2}(\nabla_{{x}}\pm\nabla_{{y}})\ ,\qquad D({x})=\diver_{{x}} V({x})\ ,$$
and for any real-valued function $\Psi$ on $\R^{N}$ 
$$\Psi^{\pm}_{\varepsilon} ({x}, {y}) := \Psi
({x}_+ + \varepsilon {x}_-) \pm \Psi
({x}_+ - \varepsilon {x}_-) \ .$$
\rmk{From these expressions, it is clear that neither $\Gamma^{1,*}_{\varepsilon,st}$ nor $\Gamma^{2,*}_{\varepsilon,st}$ influence the support of the test-functions, and so the operators $\Gamma^{1,*}_{\varepsilon,st}: \mathcal{E}_{R,n} \to \mathcal{E}_{R,n-1}$ and $\Gamma^{2,*}_{\varepsilon,st}: \mathcal{E}_{R,n} \to \mathcal{E}_{R,n-2}$ are indeed well-defined.}
Next, by the Taylor formula we obtain
\begin{align*}
 \varepsilon^{- 1} V^-_{\varepsilon} &= 2\, x_-\int_0^1 \mathrm D V\big((x_+-\varepsilon x_-)+2\varepsilon r x_-\big)\,\dd r.
\end{align*}
Hence, since \rmk{any function $\Phi\in \mathcal{E}_{R,n}$ satisfies $\Phi(x,y)=0$ as soon as $|x_-|\geq 1$, we obtain for $n=0,1,2$
\[ \| \Gamma^{1,*}_{V, \varepsilon} \Phi \|_{\mathcal{E}_{R,n}} \lesssim(\| V
   \|_{W^{n, \infty}} + \| \mathrm{D} V \|_{W^{n, \infty}}) \| \Phi \|_{\mathcal{E}_{R,n+1}} \lesssim \| V
   \|_{W^{n+1, \infty}} \| \Phi \|_{\mathcal{E}_{R,n+1}}\ ,\]
and for $n=0,1$
\begin{align*}
\| \Gamma^{2,*}_{V, \varepsilon} \Phi \|_{\mathcal{E}_{R,n}}= \| \Gamma^{1,*}_{V, \varepsilon} \Gamma^{1,*}_{V, \varepsilon} \Phi \|_{\mathcal{E}_{R,n}}&\lesssim \| V
   \|_{W^{n+1, \infty}} \|\Gamma^{1,*}_{V, \varepsilon} \Phi \|_{\mathcal{E}_{R,n+1}}\\
	&\lesssim  \| V
   \|_{W^{n+1, \infty}} \| V
   \|_{W^{n+2, \infty}}\| \Phi \|_{\mathcal{E}_{R,n+2}}\ ,
\end{align*}
which holds true uniformly in $\varepsilon$. Consequently, uniformly in $\varepsilon$ (and of course $R$),
$$\lVert \Gamma^{1}_{\ep,st}\rVert_{\cl(\mathcal E_{R,n},\mathcal E_{R,n-1})}^p \lesssim_{\|V\|_{W^{3,\infty}}} \omega_Z(s,t),\qquad n\in \{-0,-2\},$$
$$ \lVert \Gamma^{2}_{\ep,st}\rVert_{\cl(\mathcal E_{R,n},\mathcal E_{R,n-2})}^{p/2} \lesssim_{\|V\|_{W^{3,\infty}}} \omega_Z(s,t),\qquad n\in \{-0,-1\} ,$$}
where $\omega_Z$ is a control corresponding to the rough path $\mathbf{Z}$, namely,
$$|Z_{st}^1|\leq \omega_Z(s,t)^\frac1p,\qquad |Z^2_{st}| \leq \omega_Z(s,t)^\frac2p.$$
\end{proof}

\section{Rough conservation laws I: Presentation}
\label{sec:conservation-presentation}
Throughout the remainder of the paper, we are interested in a rough path driven scalar conservation law of the form
\begin{align}\label{eq1}
\begin{aligned}
\dif u+\diver\big(A(x,u)\big)\dif z&=0,\qquad t\in(0,T),\,x\in\R^N,\\
u(0)&=u_0,
\end{aligned}
\end{align}
where $z=(z^{1},\dots,z^{K})$ and $z$ can be lifted to a geometric $p$-rough path, $A:\R^N_x\times\R_\xi\rightarrow\R^{N\times K}$. Using the Einstein summation convention, \eqref{eq1} rewrites
\begin{equation*}
\begin{split}
\dif u+\partial_{x_i}\big(A_{ij}(x,u)\big)\,\dif z^j&=0,\qquad t\in(0,T),\,x\in\R^N,\\
u(0)&=u_0.
\end{split}
\end{equation*}

As the next step, let us introduce the kinetic formulation of \eqref{eq1} as well as the basic definitions concerning the notion of kinetic solution. We refer the reader to \cite{perth} for a detailed exposition. The motivation behind this approach is given by the nonexistence of a strong solution and, on the other hand, the nonuniqueness of weak solutions, even in simple cases. The idea is to establish an additional criterion -- the kinetic formulation --
which is automatically satisfied by any strong solution to \eqref{eq1} (in case it exists) and which permits to ensure the well-posedness. The linear character of the kinetic formulation simplifies also the analysis of the remainder terms and the proof of the apriori estimates.
It is well-known that in the case of a smooth driving signal $z$, the kinetic formulation of \eqref{eq1}, which describes the time evolution of $f_{t}(x,\xi)=\1_{u_{t}(x)>\xi}$, reads as
\begin{align}\label{eq:kinform}
\begin{split}
\dif f+\nabla_x f\cdot a\,\dif z-\partial_\xi f\, b\,\dif z&=\partial_\xi m,\\
f(0)&=f_0,
\end{split}
\end{align}
where the coefficients $a,b$ are given by
$$a=(a_{ij})=(\partial_\xi A_{ij}):\R^N\times\R\to\R^{N\times K},\qquad b=(b_j)=(\diver_x A_{\cdot j}):\R^N\times\R\to \R^K$$
and $m$ is a nonnegative finite measure on $[0,T]\times\R_x^{N}\times\R_\xi$ which becomes part of the solution. The measure $m$ is called kinetic defect measure as it takes account of possible singularities of $u$. Indeed, if there was a smooth solution to \eqref{eq1} then one can derive \eqref{eq:kinform} rigorously with $m\equiv 0.$ We say then that $u$ is a kinetic solution to \eqref{eq1} provided, roughly speaking, there exists a kinetic measure $m$ such that the pair $(f=\1_{u>\xi},m)$ solves \eqref{eq:kinform} in the sense of distributions on $[0,T)\times\R^N_x\times\R_\xi$.

In the case of a rough driver $z$, we will give an intrinsic notion of kinetic solution to~\eqref{eq1}. In particular, the kinetic formulation \eqref{eq:kinform} will be understood in the framework of unbounded rough drivers presented in the previous sections.
The reader can immediately observe that~\eqref{eq:kinform} fits very naturally into this concept: the left hand side of \eqref{eq:kinform} is of the form of a rough transport equation whereas the kinetic measure on the right hand side plays the role of a drift. Nevertheless, one has to be careful since the kinetic measure is not given in advance, it is a part of the solution and has to be constructed within the proof of existence. Besides, in the proof of uniqueness, one has to compare two solutions with possibly different kinetic measures.

The kinetic formulation \eqref{eq:kinform} can be rewritten as
\begin{align*}
\dif f= \left(\begin{array}{cc}
											b\\
											-a
											\end{array}\right)\cdot\left(\begin{array}{c}
			\partial_\xi f\\
			\nabla_x f
			\end{array}\right)\dif z+\partial_\xi m
\end{align*}
or
\begin{align}\label{eq:kinform-2}
\dif f=V\cdot\nabla_{\xi,x}f\,\dif z+\partial_\xi m,
\end{align}
where the family of vector fields $V$ is given by
\begin{equation}\label{eq:def-V-conserv}
V=(V^1,\dots,V^{K})=\left(\begin{array}{cc}
											b\\
											-a
											\end{array}\right)=\left(\begin{array}{cccccc}
											b_1 &\cdots & b_K \\
											-a_{11}&\cdots& -a_{1K}\\
											\vdots&\cdots&\vdots\\
											-a_{N1}&\cdots& -a_{NK}\\
											\end{array}\right).
\end{equation}
Note that these vector fields satisfy for $i\in\{1,\dots, K\}$
\begin{equation}\label{divfree}
\diver_{\xi,x} V_i=\partial_\xi b_i-\diver_x a_{\cdot i}=\partial_\xi \diver_x A_{\cdot i}-\diver_x \partial_\xi A_{\cdot i}=0.
\end{equation}

Let us now label the assumptions we will use in order to solve equation \eqref{eq:kinform} or its equivalent form~\eqref{eq:kinform-2}, beginning with the assumptions on $V$:

\begin{hypothesis}\label{hyp:V}
Let $V$ be the family of vector fields defined by relation \eqref{eq:def-V-conserv}. We assume that:
\begin{equation}\label{H}
V\in W^{3,\infty}(\R^{N+1})\qquad\text{and}\qquad V(x,0)=0\qquad\forall x\in\R^N.
\end{equation}
\end{hypothesis}

Notice that the assumption $V(x,0)=0$ is only used for the a priori estimates on solutions of \eqref{eq:kinform}, so that it will not show up before Section \ref{sec:apriori}. In addition,
as in the toy heat model case, we shall also assume that $z$ can be understood as a rough path. 

\begin{hypothesis}\label{hyp:z}
\rmk{For some fixed $p\in [2,3)$, the function $z$ admits a lift to a continuous (weak geometric) $p$-rough path $\mathbf{Z}=(Z^1,Z^2)$ on $[0,T]$ (in the sense of Definition \ref{def-p-rough-path}), and we fix a regular control $\omega_Z$ such that for all $s<t\in [0,T]$
$$
|Z_{st}^1|\leq \omega_Z(s,t)^\frac1p\quad ,\quad |Z^2_{st}| \leq \omega_Z(s,t)^\frac2p \ .
$$}
\end{hypothesis}

\rmk{Endowed with the $p$-rough path $\mathbf{Z}=(Z^1,Z^2)$, we can turn to the presentation of the rough driver structure related to our equation (\ref{eq:kinform-2}). The scale of spaces \rmk{$(E_n)_{0\leq n\leq 3}$} where this equation will be considered is 
\begin{equation*}
E_n=W^{n,1}(\R^{N+1})\cap W^{n,\infty}(\R^{N+1})\ .
\end{equation*}
Then we define the central operator-valued paths as follows: for all $s<t\in [0,T]$ and $\vp\in E_1$ (resp. $\vp\in E_2$),}
\begin{align}\label{d:urd}
\begin{aligned}
A^1_{st}\varphi:&=Z^{1,i}_{st} \, V^i\cdot\nabla_{\xi,x}\varphi \ ,\\
\text{resp.} \quad A^2_{st}\varphi:&=Z^{2,ij}_{st}\, V^j\cdot\nabla_{\xi,x}( V^i\cdot\nabla_{\xi,x}\varphi)\ .
\end{aligned}
\end{align}
\rmk{It is readily checked that $\mathbf{A}:=(A^1,A^2)$ defines a continuous unbounded $p$-rough driver on $(E_n)_{0\leq n\leq 3}$, and that for all $s<t\in [0,T]$,}
\begin{align}\label{eq:urdest}
\begin{aligned}
\lVert A^{1}_{st}\rVert_{\cl( E_n, E_{n-1})}^p &\lesssim \omega_Z(s,t),\qquad n\in \{-0,-2\},\\
 \lVert A^{2}_{st}\rVert_{\cl( E_n, E_{n-2})}^{p/2} &\lesssim \omega_Z(s,t),\qquad n\in \{-0,-1\} .
\end{aligned}
\end{align}
As in Section \ref{sec:urd}, it will be useful to have the expression of $A^{1,\ast}$ and $A^{2,\ast}$ in mind for our computations. Here it is readily checked that:
\begin{equation}\label{eq:def-A-star-conserv}
A_{st}^{1,\ast}\vp
=- Z_{st}^{1,k} \, \text{div}_{x,\xi}\lp V^{k}\vp \rp,
\quad\text{and}\quad
A_{st}^{2,\ast}\vp
= Z_{st}^{2,jk} \, \text{div}_{x,\xi}\lp V^{j} \text{div}_{x,\xi}\lp V^{k}\vp \rp \rp .
\end{equation}

The last point to be specified in order to include \eqref{eq:kinform} in the framework of unbounded rough drivers, is how to understand the drift term given by the kinetic measure $m$. To be more precise, in view of Subsection \ref{subsec:apr}, one would like to rewrite \eqref{eq:kinform} as
\begin{equation}\label{eq:LR}
\delta f_{st} = \delta \mu _{st} + (A^1_{st}+A^2_{st})f_s +f^{\natural}_{st}
\end{equation}
where $\delta\mu$ stands for the increment of the corresponding kinetic measure term and $f^\natural$ is a suitable remainder. However, already in the smooth setting such a formulation can only be true for a.e. $s,t\in[0,T]$. Indeed, the kinetic measure contains shocks of the kinetic solution and thus it is not absolutely continuous with respect to the Lebesgue measure. The atoms of the kinetic measure correspond precisely to singularities of the solution. Therefore, it makes a difference if we \rmk{define the drift term $\mu_t(\vp)$ as $-m(\1_{[0,t]}\partial_\xi\varphi)$ or $-m(\1_{[0,t)}\partial_\xi\varphi)$.} According to the properties of functions with bounded variation, the first one is right-continuous whereas the second one is left-continuous. Furthermore, they coincide everywhere except on a set of times which is at most countable.
Note also that the rough integral $f^\flat$ defined by
$$
\delta f^\flat_{st} = (A^1_{st}+A^2_{st})f_s +f^{\natural}_{st}
$$
 is expected to be continuous in time. Thus, depending on the chosen definition of $\mu$, we obtain either right- or left-continuous representative of the class of equivalence $f$ on the left hand side of \eqref{eq:LR}. These representatives will be denoted by $f^+$ and $f^-$ respectively.

For the sake of completeness, recall that in the deterministic (as well as stochastic) setting, the kinetic formulation \eqref{eq:kinform} is understood in the sense of distributions on $[0,T)\times\R^{N+1}$. That is, the test functions depend also on time. Nevertheless, for our purposes it seems to be more convenient to consider directly the equation for the increments $\delta f^\pm_{st}$. Correspondingly, we include two versions of~\eqref{eq:kinform} in the definition of kinetic solution, even though this presentation may look slightly redundant at first. Both of these equations will actually be needed in the proof of uniqueness.

\rmk{Before doing so, we proceed with a reminder of two technical definitions introduced in \cite{debus} and extending classical concepts from PDE literature: the definition of a Young measure and a kinetic function. Just as in \cite{debus, H16}, the consideration of such specific objects will be one of the keys toward wellposedness for the problem \eqref{eq:kinform}.}

In what follows, we denote by $\mathcal{P}_1(\R)$ the set of probability measures on $\R$.

\begin{definition}[Young measure]
Let $(X,\lambda)$ be a $\sigma$-finite measure space. A mapping $\nu:X\rightarrow\mathcal{P}_1(\R)$ is called a Young measure provided it is weakly measurable, that is, for all $\phi\in C_b(\R)$ the mapping $z\mapsto \nu_z(\phi)$ from $X$ to $\R$ is measurable.
A Young measure $\nu$ is said to vanish at infinity if
\begin{equation*}
\int_X\int_\R|\xi|\,\dif\nu_z(\xi)\,\dif\lambda(z)<\infty.
\end{equation*}
\end{definition}

\begin{definition}[Kinetic function]
Let $(X,\lambda)$ be a $\sigma$-finite measure space.
A measurable function $f:X\times\R\rightarrow[0,1]$ is called a kinetic function on $X$ if there exists a Young measure $\nu$ on $X$ that vanishes at infinity and such that for a.e. $z\in X$ and for all $\xi\in\R$
$$f(z,\xi)=\nu_z((\xi,\infty)).$$
\end{definition}

\rmk{We are now ready to introduce the notion of generalized kinetic solution to \eqref{eq:kinform}, as an intermediate step in the construction of full solutions.}

\begin{definition}[Generalized kinetic solution]\label{genkinsol}
Let $f_0:\R^{N+1}\rightarrow[0,1]$ be a kinetic function. A measurable function $f:[0,T]\times\R^{N+1}\to [0,1]$ is called a generalized kinetic solution to~\eqref{eq1} with initial datum $f_0$ provided
\begin{enumerate}
\item there exist $f^\pm$, such that $f^+_t=f^-_t=f_t$ for a.e. $t\in[0,T]$, $f^\pm_t$ are kinetic functions on $\R^N$ for all $t\in[0,T]$, and the associated Young measures $\nu^\pm$  satisfy
\begin{equation}\label{integrov}
\sup_{0\leq t\leq T}\int_{\R^N}\int_{\R}|\xi|\,\dif\nu^\pm_{t,x}(\xi)\,\dd x<\infty , 
\end{equation}
\item $f^+_0=f^-_0=f_0$,
\item there exists a finite Borel measure $m$ on $[0,T]\times\R^{N+1}$,
\item there exist $f^{\pm,\natural}\in V^{\frac{q}{3}}_{2,\text{loc}}([0,T];E_{-3})$
for some $q<3$,
\end{enumerate}
such that, recalling our definition \eqref{eq:def-A-star-conserv} of $A^{1,*}$ and $A^{2,*}$, we have that
\begin{eqnarray}
\delta f^+_{st}(\varphi)&=&f^+_s(A^{1,*}_{st} \varphi +A^{2,*}_{st} \varphi)-m(\1_{(s,t]}  \partial_\xi \varphi)+f^{+,\natural}_{st}(\varphi) ,\label{eq:delta-f-plus-weak}\\
\delta f^-_{st}(\varphi)&=&f^-_s(A^{1,*}_{st} \varphi +A^{2,*}_{st} \varphi)-m(\1_{[s,t)}  \partial_\xi \varphi)+f^{-,\natural}_{st}(\varphi) , \label{eq:delta-f-minus-weak}
\end{eqnarray}
holds true for all $s<t\in[0,T]$ and all $\varphi\in E_3$. 
\end{definition}

Finally we state the precise notion of solution we will consider for eq.~\eqref{eq:kinform}:

\begin{definition}[Kinetic solution]\label{def:kinsol}
Let $u_0\in L^1(\R^N).$
Then $u\in  L^\infty(0,T;L^1(\R^N))$
is called a kinetic solution to \eqref{eq1} with initial datum $u_0$ if the function $f_{t}(x,\xi) = \1_{u_{t}(x)>\xi}$ is a generalized kinetic solution according to Definition \ref{genkinsol} with initial condition $f_0(x,\xi) = \1_{u_0(x)>\xi}$. 
\end{definition}

\subsection{The main result}
\label{subsec:main}

Our well-posedness result for the conservation law \eqref{eq1} reads as follows.

\begin{theorem}\label{thm:main}
Let $u_0\in L^1(\R^N)\cap L^2(\R^N)$, and assume our Hypothesis \ref{hyp:V} and \ref{hyp:z} are satisfied. Then the following statements hold true:
\begin{enumerate}
\item[\emph{(i)}] There exists a unique kinetic solution to \eqref{eq1} and it belongs to $L^\infty(0,T;L^2(\R^N))$.
\item[\emph{(ii)}] Any generalized kinetic solution is actually a kinetic solution, that is, if $f$ is a generalized kinetic solution to equation \eqref{eq1} with initial datum $\ind_{u_0>\xi}$ then there exists a kinetic solution $u$ to \eqref{eq1} with initial datum $u_0$ such that $f=\ind_{u>\xi}$ for a.e. $(t,x,\xi)$.
\item[\emph{(iii)}] If $u_1,\,u_2$ are kinetic solutions to \eqref{eq1} with initial data $u_{1,0}$ and $u_{2,0}$, respectively, then for a.e. $t\in[0,T]$
\begin{equation*}
\|(u_1(t)-u_2(t))^+\|_{L^1}\leq \|(u_{1,0}-u_{2,0})^+\|_{L^1}.
\end{equation*}
\end{enumerate}
\end{theorem}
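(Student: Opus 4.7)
The proof proceeds in three parts: first a tensorization argument establishes a diagonal inequality for products of generalized kinetic solutions; from this one deduces both the reduction (ii) and the $L^1$-contraction (iii); finally existence in (i) is obtained by viscous regularization combined with a smooth approximation of the rough driver and a compactness argument. Uniqueness in (i) is an immediate consequence of (iii).

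\textbf{Tensorization and passage to the diagonal.} Given two generalized kinetic solutions $f^1, f^2$ with kinetic measures $m_1, m_2$, the plan is to pair the left-continuous representative $f^{1,-}$ with $1-f^{2,+}$, where $f^{2,+}$ is the right-continuous representative, and to form the product $G_t := f^{1,-}_t \otimes (1-f^{2,+}_t)$ on $\R^{N+1}\times\R^{N+1}$. Since $1-f^{2,+}$ satisfies an equation analogous to \eqref{eq:delta-f-plus-weak} with $m_2$ replaced by $-m_2$, the tensor $G$ solves a rough equation of the form \eqref{eq:tensor} driven by $\mathbf{\Gamma}=(\Gamma^1,\Gamma^2)$ from \eqref{driver:tens}, with drift $M$ built from $m_1$ on the $(x,\xi)$-factor and from $m_2$ on the $(y,\zeta)$-factor. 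Following Subsection \ref{subsec:prelim} I would test against the blow-up $T^*_\varepsilon \Phi_R$ of a mollifier of the diagonal $\{x=y,\xi=\zeta\}$ localized by a cutoff $\varphi_R(x_+)$ in the scale $\mathcal E_{R,n}$ of \eqref{scale-e-r-n}. By Proposition \ref{prop:renorm} the rescaled tensorized driver $\mathbf{\Gamma}_\varepsilon$ is bounded uniformly in $\varepsilon$ in this scale, while the rescaled drift $M^\varepsilon$ is controlled by the finiteness of $m_1,m_2$ together with the moment bound \eqref{integrov}; Corollary \ref{cor:apriori} then yields a uniform-in-$\varepsilon$ bound on the remainder $G^{\natural,\varepsilon}$ permitting passage to the limit.

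\textbf{Reduction and contraction.} Setting $Q^R_t := \int \varphi_R(x)\int f^{1,-}_t(x,\xi)(1-f^{2,+}_t(x,\xi))\,\dd\xi\,\dd x$, the limit $\varepsilon\to 0$ of the tested tensorized equation should produce an inequality of the form
$$\delta Q^R_{st} \leq C\Big(\sup_{r\leq t} Q^R_r\Big)\omega_Z(s,t)^{1/p} + R^R_{st},$$
where $R^R_{st}$ collects the diagonal limits of the two kinetic-measure contributions. Using the positivity of $m_1,m_2$ and the fact that $\partial_\xi$ of a symmetric mollifier of $\ind_{\xi>\zeta}$ has the sign compatible with the asymmetric $f^{1,-}/f^{2,+}$ pairing (the Chen--Perthame sign argument transposed to the rough setting), one gets $R^R_{st}\leq 0$. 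The Rough Gronwall Lemma \ref{lemma:basic-rough-gronwall} then yields $Q^R_t \lesssim_R Q^R_0$. Specializing to $f^1=f^2=f$ with $f_0=\ind_{u_0>\xi}$ gives $Q^R_0=0$, hence $f^\pm_t(1-f^\pm_t)=0$ a.e., and since a decreasing $\{0,1\}$-valued kinetic function is necessarily of the form $\ind_{u>\xi}$, this proves (ii). For (iii), the identity $\int \ind_{u_1>\xi}(1-\ind_{u_2>\xi})\,\dd\xi = (u_1-u_2)^+$ together with $R\to\infty$, justified by the divergence-free identity \eqref{divfree} (which kills any exponential loss coming from the Rough Gronwall step once one tests against a spatial constant), gives the sharp contraction.

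\textbf{Existence and main obstacle.} For existence in (i), I would replace $\mathbf{Z}$ by the canonical lift $\mathbf{Z}^\nu$ of a smooth approximation $z^\nu$ of $z$ as in \eqref{uniform-assumption-z}, add vanishing viscosity $\nu\Delta u^\nu$ to \eqref{eq1}, and obtain classical smooth kinetic solutions $f^\nu=\ind_{u^\nu>\xi}$ with kinetic measures $m^\nu$. The uniform $L^p$-bounds of Section \ref{sec:apriori}, stable under $p$-variation rough-path topology, provide compactness of $(f^\nu,m^\nu)$ in the Young-measure and weak-$\ast$ senses respectively; extracting a subsequential limit $(f,m)$ produces a generalized kinetic solution, which by (ii) is a bona fide kinetic solution in $L^\infty(0,T;L^2(\R^N))$. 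The most delicate step is the diagonal limit of the drift $M^\varepsilon$: because the kinetic measures may carry time atoms (which is precisely why Definition \ref{genkinsol} distinguishes $f^+$ and $f^-$ and why the $p$-variation setting of Remark \ref{rk:p-var-topo} is indispensable), one must carefully pair $f^{1,-}$ with $m_1$ on $[s,t)$ and $f^{2,+}$ with $m_2$ on $(s,t]$; this asymmetric bookkeeping, combined with the sign of $\partial_\xi$ of the mollifier and the positivity of the measures, is what ultimately produces the favourable sign $R^R_{st}\leq 0$ and allows the closure via Lemma \ref{lemma:basic-rough-gronwall}.
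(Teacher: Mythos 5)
Your architecture is right: tensorize, blow up, estimate, pass to the diagonal, then derive reduction and contraction, and build existence from smooth approximations plus compactness plus (ii). Two genuine gaps remain, and the existence route is a detour.

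First, the sign claim \(R^R_{st}\le 0\) for the diagonal limit of the measure contribution is too strong, and it is not how the argument can close. In the paper's decomposition of the drift \(Q\) (see \eqref{eq:def-Q-distrib} and \eqref{decompo-incr-q-1}), the \(\sigma\)-part — the tensors of a kinetic measure with a Young measure — does come with a favourable sign and can be moved to the left side of the inequality (see \eqref{appli-1-q-ep-theta}); but the complementary \(\partial_\xi^+ q\)-part is \emph{not} sign-definite. It is controlled quantitatively by \(\omega_m(s,t)\,\|\partial_\xi^+\Phi\|_{\mathcal E_{R,0}}\), and the only way to kill it is to observe that for the well-chosen test functions \(\varphi_{R,L}(\mathbf x_+)=\varphi(\mathbf x_+/R)\psi(\mathbf x_+/L)\) one has \(\|\partial_\xi^+\Phi_{R,L}\|\lesssim R^{-1}+L^{-1}\). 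Moreover, closing the estimate on the \(\sigma\)-part itself requires a self-referential bootstrap: you must bound \(\omega_{\sigma,\varepsilon,R}\) by testing the transformed equation against \(\Psi_R\) and absorbing the \(\omega_{\sigma,\varepsilon,R}\,\omega_Z^\kappa\) term into the left side on small intervals (Lemma \ref{lem:conv-mu-theta-ep}), before you can identify a limit measure \(\mu\) controlling \(\limsup\limsup \omega_{\sigma,\varepsilon,R}\). Asserting an a priori sign sweeps this under the rug.

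Second, the Rough Gronwall Lemma alone cannot produce the sharp constant-one contraction; it always carries an exponential factor. The paper instead uses a two-stage argument: Rough Gronwall applied with \(\psi_L\) gives \(\sup_t h_t(\psi_L)\lesssim 1\) \emph{uniformly in} \(L\), and monotone convergence then yields the uniform \(L^1\) bound \(\sup_t h_t(\mathbf 1)\lesssim 1\). Only then, testing the increment equation against the constant function \(\mathbf 1\), does \(\operatorname{div}V=0\) make the driver contribution \((A^1+A^2)h_s(\mathbf 1)\) vanish identically, leaving \(\delta h_{st}(\mathbf 1)\lesssim\omega_\natural(s,t)^{3/q}\); telescoping on a refining partition and using the regularity of the controls (\(\sup_i\omega_Z(t_i,t_{i+1})\to 0\)) sends the right-hand side to zero and gives \(h_t(\mathbf 1)\le h_0(\mathbf 1)\). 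Your "the divergence-free identity kills the exponential loss" phrase points at the right mechanism but misses that a prior \emph{non-sharp} bound is indispensable before the telescoping step can even be set up.

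On existence: the vanishing-viscosity regularization is unnecessary. Since the paper's Hypothesis \ref{hyp:V} already gives smooth vector fields and the driver is approximated by canonical lifts of smooth paths \(z^n\), the classical (deterministic) theory of conservation laws already supplies kinetic solutions \(u^n\) for each \(n\); Lemma \ref{lemma:classical-to-rough} recasts these as rough kinetic solutions with uniform bounds via \eqref{uniform-assumption-z}, and then the \(L^q\) a priori estimates of Theorem \ref{th:apriori-est-existence} and the compactness Lemma \ref{kinetcomp} close the argument. Adding \(\nu\Delta\) is a different route that would work but forces you to first pass \(\nu\to 0\) at fixed \(n\), duplicating estimates and making the double limit harder to keep uniform.
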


\begin{remark}\label{u+}
Note that in the definition of a kinetic solution, $u$ is a class of equivalence in the functional space $L^\infty(0,T;L^1(\R^N))$. Consequently, the $L^1$-contraction property holds true only for a.e. $t\in[0,T]$. However, it can be proved that in the class of equivalence $u$ there exists a representative $u^+$, defined through $\ind_{u^+(t,x)>\xi}=f^+_{t}(x,\xi)$, which has better continuity properties and in particular it is defined for every $t\in[0,T]$. If $u^+_1$ and $u^+_2$ are these representatives associated to $u_1$ and $u_2$ respectively, then
$$\|(u^+_1(t)-u^+_2(t))^+\|_{L^1}\leq \|(u_{1,0}-u_{2,0})^+\|_{L^1}$$
is satisfied for every $t\in[0,T]$.
\end{remark}

\subsection{Conservation laws with smooth drivers}

Let us show that in the case of a smooth driver $z$, our notion of solution coincides with the classical notion of kinetic solution. Recall that using the standard  theory for conservation laws, one obtains existence of a unique kinetic solution $u\in L^\infty(0,T;L^1(\R^N))\cap L^\infty(0,T;L^2(\R^N))$ to the  problem
\begin{equation}\label{eq:classic-conservation}
\partial_t u+\diver(A(x,u))\dot z=0,\qquad
u(0)=u_0.
\end{equation}
In other words, there exists a kinetic measure $m$ such that $f=\ind_{u>\xi}$ satisfies the corresponding kinetic formulation
\begin{align*}
\partial_t f &=\partial_\xi m +V\cdot\nabla_{\xi,x} f\, \dot{z}, \\
f(0)&=f_0=\ind_{u_0>\xi},
\end{align*}
in the sense of distributions over $[0,T)\times\R^{N+1}$, that is, for every $\varphi\in C^\infty_c([0,T)\times\R^{N+1})$ it holds true
\begin{align}\label{eq:weak}
\int_0^T f_t(\partial_t\varphi_t)\,\dd t+f_0(\varphi_0)=\int_0^T f_t(V\cdot\nabla\varphi_t)\,\dd z_t+m(\partial_\xi\varphi)
\end{align}
(recall that $\mathrm{div} V=0$).

\begin{lemma}\label{lemma:classical-to-rough} 
\rmk{Let $u$ be a classical kinetic solution of equation \eqref{eq:classic-conservation}. Then $u$ is also a rough kinetic solution in the sense of Definition~\ref{def:kinsol}, and the following relation holds true:
\begin{equation*}
f_{t}(x,\xi)=\nu_{t,x}((\xi,\infty))=\1_{u_{t}(x)>\xi},
\end{equation*}
where $\nu$ is the Young measure related to $f$.
}
\end{lemma}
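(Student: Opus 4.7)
The strategy is to deduce the rough increment identities \eqref{eq:delta-f-plus-weak}--\eqref{eq:delta-f-minus-weak} from the classical distributional formulation \eqref{eq:weak} by two successive operations: extract one-sided continuous representatives $f^\pm$ from the equivalence class $f$, then perform a second-order Taylor expansion of the smooth driving integral $\int_s^t f_r(V\cdot\nabla\psi)\,\dd z_r$ in order to reveal the $A^{1,*}_{st}$- and $A^{2,*}_{st}$-contributions, with the leftover absorbed into $f^{\pm,\natural}$. For the first step, plug into \eqref{eq:weak} test functions of the product form $\varphi(t,x,\xi)=\chi(t)\psi(x,\xi)$, $\psi\in E_3$, with $\chi$ a monotone smooth approximation of either $\mathbf{1}_{[0,t_0]}$ or $\mathbf{1}_{[0,t_0)}$, and pass to the limit to obtain
\begin{align*}
f^\pm_{t_0}(\psi):=f_0(\psi)-\int_0^{t_0} f_r(V\cdot\nabla\psi)\,\dd z_r-m(\mathbf{1}_{I^\pm_{t_0}}\partial_\xi\psi),
\end{align*}
where $I^+_{t_0}=[0,t_0]$ and $I^-_{t_0}=[0,t_0)$. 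These define right- and left-continuous representatives respectively, satisfying $f^\pm_0=f_0$ and agreeing with $f_{t_0}$ outside the at most countable set of time-atoms of the finite measure $m$. Subtracting at $s<t$ yields
$$\delta f^\pm_{st}(\psi)=-\int_s^t f_r(V\cdot\nabla\psi)\,\dd z_r-m(\mathbf{1}_{J^\pm_{st}}\partial_\xi\psi),$$
with $J^+_{st}=(s,t]$ and $J^-_{st}=[s,t)$.

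For the second step, expand the driving integral twice against the $p$-rough path $\mathbf{Z}=(Z^1,Z^2)$. Writing
$$\int_s^t f_r(V^k\cdot\nabla\psi)\,\dd z^k_r=f_s(V^k\cdot\nabla\psi)\,Z^{1,k}_{st}+\int_s^t\delta f_{sr}(V^k\cdot\nabla\psi)\,\dd z^k_r$$
and substituting the previous increment identity into $\delta f_{sr}(V^k\cdot\nabla\psi)$, one obtains
$$\int_s^t f_r(V^k\cdot\nabla\psi)\,\dd z^k_r=f_s(V^k\cdot\nabla\psi)\,Z^{1,k}_{st}-f_s\bigl(V^j\cdot\nabla(V^k\cdot\nabla\psi)\bigr)Z^{2,jk}_{st}-f^{\pm,\natural}_{st}(\psi),$$
where $-f^{\pm,\natural}_{st}(\psi)$ collects the third-order remainder $\int_s^t\!\int_s^r\delta f_{su}(V^j\cdot\nabla(V^k\cdot\nabla\psi))\,\dd z^j_u\,\dd z^k_r$ together with the iterated measure term $\int_s^t m(\mathbf{1}_{J^\pm_{sr}}\partial_\xi(V^k\cdot\nabla\psi))\,\dd z^k_r$. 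The divergence-free property \eqref{divfree} gives $\mathrm{div}(V^k\psi)=V^k\cdot\nabla\psi$ and $\mathrm{div}(V^j\mathrm{div}(V^k\psi))=V^j\cdot\nabla(V^k\cdot\nabla\psi)$, so that comparison with \eqref{eq:def-A-star-conserv} allows me to recognize $-Z^{1,k}_{st}f_s(V^k\cdot\nabla\psi)=f_s(A^{1,*}_{st}\psi)$ and $Z^{2,jk}_{st}f_s(V^j\cdot\nabla(V^k\cdot\nabla\psi))=f_s(A^{2,*}_{st}\psi)$. Injected into the increment identity of the first step, this produces exactly \eqref{eq:delta-f-plus-weak}--\eqref{eq:delta-f-minus-weak}.

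It remains to verify the requirements (i)--(iv) of Definition~\ref{genkinsol}. The identification $f^\pm_t(x,\xi)=\nu^\pm_{t,x}((\xi,\infty))=\mathbf{1}_{u^\pm_t(x)>\xi}$, with $u^\pm$ the one-sided representatives of $u$, and the integrability \eqref{integrov}, follow from the standard $L^\infty(0,T;L^1\cap L^2)$-bound on $u$ provided by the classical kinetic theory. For the remainder, using the $L^\infty$-bound on $f$ together with the classical increment identity itself to estimate $\delta f_{su}$ in $E_{-1}$, and a Fubini rewriting of the iterated measure integral as $\int_{(s,t]\times\R^{N+1}}\partial_\xi(V^k\cdot\nabla\psi)(z^k_t-z^k_\tau)\,\dd m$, one obtains in the smooth case a bound of the form $\|f^{\pm,\natural}_{st}\|_{E_{-3}}\lesssim(t-s)^3+m((s,t]\times\R^{N+1})(t-s)$, from which $f^{\pm,\natural}\in V^{q/3}_{2,\mathrm{loc}}([0,T];E_{-3})$ for any $q\in[p,3)$ (using that $m((s,t]\times\R^{N+1})\cdot(t-s)$ is superadditive). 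The main piece of bookkeeping is the careful tracking of signs and the correct choice of $(s,t]$ versus $[s,t)$ in the kinetic-measure term to match the one-sided continuity of $f^\pm$; every other manipulation is routine from classical kinetic theory.
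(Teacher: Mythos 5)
Your strategy matches the paper's: extract one-sided representatives $f^\pm$ by testing \eqref{eq:weak} against temporal approximations of $\mathbf{1}_{[0,t_0]}$ and $\mathbf{1}_{[0,t_0)}$, then Taylor-expand the Riemann--Stieltjes integral against $\mathbf{Z}$ to reveal $A^{1,*},A^{2,*}$ with a remainder absorbed into $f^{\pm,\natural}$, and finally estimate the remainder in $p$-variation. The identification of $A^{1,*},A^{2,*}$ via $\diver V=0$ and the Fubini/superadditivity treatment of the iterated-measure piece are correct and essentially as in the paper (the paper stops the Taylor expansion one step earlier and writes $f^{\pm,\natural}_{st}=-f^\pm_s(A^{2,*}_{st}\varphi)+\int_s^t\delta f_{sr}(V\cdot\nabla\varphi)\,\dd z_r$ before plugging the equation in, but the outcome is the same).

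There are two places where you leave a genuine gap. First, your step from ``pass to the limit to obtain $f^\pm_{t_0}(\psi):=\dots$'' to ``$f^\pm_t$ are kinetic functions with Young measures $\nu^\pm_t$'' needs the weak-$\ast$ compactness argument for Young measures: the formula defines $f^\pm_{t_0}$ only as a linear functional on test functions, and to upgrade it to a kinetic function on $\R^N$ (as Definition~\ref{genkinsol}(i) requires, with the moment bound \eqref{integrov}) one identifies it as the weak-$\ast$ limit of the time-averages $\frac{1}{\varepsilon}\int_{t_0}^{t_0+\varepsilon}f_s\,\dd s$, invoking Lemma~\ref{kinetcomp}. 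This is exactly what the paper's Lemma~\ref{lemma:left-right} does, and it is more than ``routine from classical kinetic theory.'' Second, the assertion ``$f^\pm_0=f_0$'' is only automatic for $f^-$; for $f^+$ your own formula gives $f^+_0(\psi)=f_0(\psi)-m(\mathbf{1}_{\{0\}}\partial_\xi\psi)$, so the claim requires showing the kinetic measure carries no mass at $t=0$. The paper handles this by citing \cite[Remark 12]{DV} and \cite[Lemma 4.3]{H16}; your proof simply asserts the equality. Both gaps are patchable, but as written they are not filled.
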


In order to derive from this formulation the equations for time increments needed in Definition \ref{genkinsol}, let us first recall a classical compactness result for Young measures.

\begin{lemma}[Compactness of Young measures]\label{kinetcomp}
Let $(X,\lambda)$ be a $\sigma$-finite measure space such that $L^1(X)$ is separable. Let $(\nu^n)$ be a sequence of Young measures on $X$ such that for some $p\in[1,\infty)$
\begin{equation}\label{eq:estyoungm}
\sup_{n\in\N}\int_X\int_\R|\xi|^p\,\dif\nu^n_z(\xi)\,\dif \lambda(z)<\infty.
\end{equation}
Then there exists a Young measure $\nu$ on $X$ satisfying \eqref{eq:estyoungm} and a subsequence, still denoted by $(\nu^n)$, such that for all $h\in L^1(X)$ and all $\phi\in C_b(\R)$
$$\lim_{n\rightarrow \infty}\int_X h(z)\int_\R \phi(\xi)\,\dif\nu^n_z(\xi)\,\dif\lambda(z)=\int_X h(z)\int_\R \phi(\xi)\,\dif\nu_z(\xi)\,\dif\lambda(z)$$
Moreover, if $f_n,\,n\in\N,$ are the kinetic functions corresponding to $\nu^n,\,n\in\N,$ such that \eqref{eq:estyoungm} holds true, then there exists a kinetic function $f$ (which correponds to the Young measure $\nu$ whose existence was ensured by the first part of the statement) and a subsequence still denoted by $(f^n)$ such that
$$f_n\overset{w^*}{\longrightarrow} f\quad \text{ in }\quad L^\infty(X\times\R).$$
\end{lemma}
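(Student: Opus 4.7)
The plan is to realize the Young measures $(\nu^n)$ as a bounded sequence in the dual of the separable Banach space $L^1(X;C_0(\R))$, to extract a weakly-$\ast$ convergent subsequence by Banach--Alaoglu, and then to exploit the uniform moment bound \eqref{eq:estyoungm} in order to upgrade the convergence from test functions in $C_0(\R)$ to test functions in $C_b(\R)$ and to identify the limit as a probability-valued Young measure.

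First I would associate to each $\nu^n$ the continuous linear form
$$T_n(\Phi):=\int_X\int_\R\Phi(z,\xi)\,\dd\nu^n_z(\xi)\,\dd\lambda(z),\qquad \Phi\in L^1(X;C_0(\R)),$$
and observe that $\|T_n\|\leq 1$ because $\nu^n_z$ is a probability measure for each $z$. Separability of $L^1(X;C_0(\R))$, which follows from that of $L^1(X)$ and of $C_0(\R)$, allows one to pass to a subsequence with $T_n\overset{\ast}{\rightharpoonup}T$. Invoking the duality $(L^1(X;C_0(\R)))^\ast\cong L^\infty_{w^\ast}(X;\mathcal M(\R))$ then represents $T$ by a weakly-$\ast$ measurable family $\{\nu_z\}_{z\in X}$ of bounded nonnegative Borel measures on $\R$, and it remains to verify that $\nu$ is a Young measure vanishing at infinity.

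For this I would fix $h\in L^1(X)$ nonnegative and a cutoff $\chi_M\in C_0(\R)$ with $0\leq \chi_M\leq 1$ and $\chi_M\equiv 1$ on $[-M,M]$. The moment assumption gives
$$\int_X h(z)\int_\R (1-\chi_M)\,\dd\nu^n_z\,\dd\lambda\leq M^{-p}\,\|h\|_\infty\, \sup_n\int_X\int_\R|\xi|^p\,\dd\nu^n_z\,\dd\lambda,$$
so that the error incurred by replacing $1$ by $\chi_M$ in the identity $\int h\,\nu^n_z(\R)\,\dd\lambda=\int h\,\dd\lambda$ is uniform in $n$. Sending first $n\to\infty$ through weak-$\ast$ convergence against the $C_0$-element $h\chi_M$, and then $M\to\infty$, forces $\nu_z(\R)=1$ for $\lambda$-a.e.\ $z$, while Fatou applied to the truncated moments $\int\int(|\xi|^p\wedge K)\,\dd\nu^n_z\,\dd\lambda$ recovers the moment bound for $\nu$. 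The upgrade from $\phi\in C_0(\R)$ to $\phi\in C_b(\R)$ then follows by splitting $\phi=\phi\chi_M+\phi(1-\chi_M)$ and bounding the tail uniformly in $n$ by $\|\phi\|_\infty M^{-p}$ times the common moment bound.

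For the statement on kinetic functions I would set $f(z,\xi):=\nu_z((\xi,\infty))$, and note that by density of tensor products it suffices to test the weak-$\ast$ convergence in $L^\infty(X\times\R)$ against $\Psi(z,\xi)=h(z)\psi(\xi)$ with $h\in L^1(X)$ and $\psi\in C^1_c(\R)$. Fubini and the identity $\nu^n_z((\xi,\infty))=\int \ind_{\eta>\xi}\,\dd\nu^n_z(\eta)$ then rewrite
$$\int h(z)\psi(\xi)f_n(z,\xi)\,\dd\lambda\,\dd\xi=\int h(z)\int\Psi_0(\eta)\,\dd\nu^n_z(\eta)\,\dd\lambda(z),\qquad \Psi_0(\eta):=\int_{-\infty}^\eta\psi(\xi)\,\dd\xi,$$
and since $\Psi_0\in C_b(\R)$ this reduces to the extended convergence already established. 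The main obstacle throughout is precisely the passage from $C_0(\R)$ to $C_b(\R)$: without the uniform moment bound \eqref{eq:estyoungm} some mass could escape to infinity along the subsequence, the limit measure $\nu_z$ would only be sub-probability, and $f$ would fail to be a genuine kinetic function.
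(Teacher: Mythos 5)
The paper itself contains no proof of Lemma~\ref{kinetcomp}: it is stated as a classical compactness fact and implicitly attributed to the Young-measure machinery of \cite{debus, DV}. The argument there (and in most of the kinetic-solution literature) goes in the opposite direction from yours: one first applies Banach--Alaoglu to the uniformly bounded kinetic functions $f_n$ in $L^\infty(X\times\R)=(L^1(X\times\R))^\ast$, passes to a weak-$\ast$ limit $f$, uses the moment bound to check that $f(z,\cdot)$ is a.e.\ nonincreasing with full range $[0,1]$, and only then disintegrates $\nu_z=-\partial_\xi f(z,\cdot)$. You instead realize $(\nu^n)$ directly in the unit ball of $(L^1(X;C_0(\R)))^\ast\cong L^\infty_{w^\ast}(X;\mathcal M(\R))$ and recover $f$ afterwards. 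The two routes are equivalent in substance (both hinge on Banach--Alaoglu plus the moment bound \eqref{eq:estyoungm} as a tightness condition preventing loss of mass at $\xi=\pm\infty$), and your diagnosis that the tightness step is where the moment hypothesis is genuinely used is exactly right; your route has the mild advantage of making the probability-valued structure of $\nu_z$ visible from the start, at the cost of invoking the vector-valued $L^1$--$L^\infty_{w^\ast}$ duality theorem.

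Two details should be shored up, though neither is a conceptual gap. First, your tail estimate $\int_X h\int_\R(1-\chi_M)\,\dd\nu^n_z\,\dd\lambda\leq M^{-p}\|h\|_\infty\sup_n\int\int|\xi|^p\,\dd\nu^n_z\,\dd\lambda$ presupposes $h\in L^\infty(X)$; as $h$ is only in $L^1$ over a $\sigma$-finite space, you should carry out the argument for $h\in L^1\cap L^\infty$ (dense in $L^1$) and then pass to general $h$ using that $\bigl|\int h\int\phi\,\dd\nu^n_z\,\dd\lambda-\int h'\int\phi\,\dd\nu^n_z\,\dd\lambda\bigr|\leq\|\phi\|_\infty\|h-h'\|_{L^1}$ uniformly in $n$ (and likewise for $\nu$), since each $\nu^n_z$, $\nu_z$ is a sub-probability measure. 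Second, $|\xi|^p\wedge K$ lies in $C_b(\R)$ but not in $C_0(\R)$, so the ``Fatou'' step needs the extra cutoff $(|\xi|^p\wedge K)\chi_M$, two successive monotone-convergence passages in $M$ and in $K$, and an exhaustion of $X$ by finite-measure sets to remove the auxiliary $h$; similarly the weak-$\ast$ measurability of $z\mapsto\nu_z(\phi)$ for $\phi\in C_b$ (not just $C_0$) should be obtained as a pointwise limit $\nu_z(\phi)=\lim_M\nu_z(\phi\chi_M)$. With these routine patches the proof is correct.
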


With this result in hand, we are able to obtain the representatives $f^+$ and $f^-$ of $f$.

\begin{lemma}\label{lemma:left-right}
\rmk{Let $f$ be a classical kinetic solution defined as in Lemma \ref{lemma:classical-to-rough}.}
For fixed $t\in (0,T)$ and $\ep>0$ set:
\begin{equation*}
{f}^{+,\varepsilon}_t:=\frac{1}{\varepsilon} \int_t^{t+\varepsilon}{f}_s\,\dd s,
\qquad 
{f}^{-,\varepsilon}_t:=\frac{1}{\varepsilon} \int^t_{t-\varepsilon}{f}_s\,\dd s .
\end{equation*}
Then there exist $f^+,\,f^-$, representatives of the class of equivalence $f$, such that, for every $t\in(0,T)$, $f^+_t,\, f^-_t$ are kinetic functions on $\R^N$ and, along subsequences,
$$
{f}^{+,\varepsilon}_t \overset{*}{\rightharpoonup}f^+_t,
\quad\text{and}\quad
{f}^{-,\varepsilon}_t \overset{*}{\rightharpoonup}f^-_t\qquad\text{in}\qquad L^\infty(\R^{N+1}).$$
Moreover, 
the corresponding Young measures $\nu^\pm_t$ satisfy
\begin{equation}\label{est:23e}
\sup_{t\in(0,T)}\int_{\R^N}\int_\R \big(|\xi|+|\xi|^2\big)\,\nu^\pm_{t,x}(\dd\xi)\leq \|u\|_{L^\infty_t L^1_{x}}+\|u\|^2_{L_t^\infty L^2_x}.
\end{equation}
\end{lemma}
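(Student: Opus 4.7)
The strategy is to exploit the kinetic formulation \eqref{eq:weak} to produce BV-in-time regularity when $f$ is tested against a fixed spatial test function, then pass this regularity through Young measure compactness (Lemma~\ref{kinetcomp}) to extract one-sided traces for every $t \in (0,T)$.

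First I would test \eqref{eq:weak} against a separated test function of the form $\varphi(t,x,\xi) = \chi(t)\psi(x,\xi)$ with $\chi \in C^{\infty}_c([0,T))$ and $\psi \in C^{\infty}_c(\R^{N+1})$. The resulting identity asserts that the distributional derivative of the scalar map $t \mapsto f_t(\psi)$ equals the sum of a bounded measure $\chi \mapsto \int_0^T f_t(V\cdot\nabla\psi)\chi\,\dot z_t\,\dd t$ and the finite signed measure $\chi \mapsto m(\chi\,\partial_\xi\psi)$. Hence, for each fixed $\psi$, the map $t \mapsto f_t(\psi)$ admits a representative in $\mathrm{BV}([0,T])$ and therefore possesses one-sided limits at every $t \in (0,T)$. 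The smoothness of the driver $z$ is used here only to ensure the right-hand side defines a bona fide locally finite measure in time.

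Next I would use separability of $L^1(\R^{N+1})$ to pick a countable dense family $\{\psi_k\}_k \subset C^\infty_c(\R^{N+1})$ and, by a diagonal extraction, select a single subsequence $\ep_n \to 0$ along which all the scalar averages $f^{\pm,\ep_n}_t(\psi_k)$ converge to the one-sided limits identified above. Since $\|f^{\pm,\ep}_t\|_{L^\infty(\R^{N+1})} \leq 1$, density combined with the Banach--Alaoglu theorem upgrades this to weak-$\ast$ convergence $f^{\pm,\ep_n}_t \overset{\ast}{\rightharpoonup} f^\pm_t$ in $L^\infty(\R^{N+1})$ for every $t \in (0,T)$, and the resulting maps $f^\pm$ coincide with $f$ almost everywhere because of the Lebesgue differentiation theorem applied to $t \mapsto f_t(\psi_k)$.

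To see that $f^\pm_t$ is a kinetic function, I would apply Lemma~\ref{kinetcomp} to the sequence of Young measures $\nu^{\pm,\ep_n}_{t,x} = \frac{1}{\ep_n}\int_I \delta_{u_s(x)}\,\dd s$, where $I = (t,t+\ep_n)$ or $I = (t-\ep_n,t)$. The required moment bound \eqref{eq:estyoungm} with $p = 1$ follows from
\[
\int_{\R^N}\!\int_\R |\xi|\,\dd\nu^{\pm,\ep}_{t,x}(\xi)\,\dd x = \frac{1}{\ep}\int_I \|u_s\|_{L^1}\,\dd s \leq \|u\|_{L^\infty_t L^1_x},
\]
and similarly with $|\xi|$ replaced by $|\xi|^2$ using the $L^2$ bound on $u$. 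Hence the extracted weak-$\ast$ limit is a Young measure $\nu^\pm_t$ representing $f^\pm_t$; passing the moment bounds to the limit by lower semicontinuity (Fatou for narrow convergence against the truncations $|\xi| \wedge M$ and $|\xi|^2 \wedge M$, then $M \uparrow \infty$) yields \eqref{est:23e}.

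The main obstacle I anticipate is the \emph{every-$t$} character of the statement: one cannot just invoke the Lebesgue differentiation theorem (which gives a.e.\ convergence) nor directly invoke weak continuity. It is precisely the BV regularity derived from the kinetic formulation, together with the careful diagonal extraction over a countable dense family of spatial test functions, that permits one to define $f^\pm_t$ at every single $t$ and identify them unambiguously as the right/left weak-$\ast$ limits of the averages.
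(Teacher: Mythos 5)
Your proof is correct, but it follows a genuinely different route from the paper's. The paper applies Lemma~\ref{kinetcomp} directly to the Young measures $\nu^{\pm,\varepsilon}_t = \frac1\varepsilon\int_I \nu_s\,\dd s$ for each fixed $t$, extracting a subsequential weak-$\ast$ limit \emph{that is allowed to depend on $t$}; the identification $f^\pm_t = f_t$ a.e.\ then comes from the Lebesgue differentiation theorem applied to $t \mapsto f_t(\psi)$ for $\psi$ ranging over a countable dense set. Your approach instead first derives BV-in-time regularity of $t\mapsto f_t(\psi)$ from the kinetic formulation with the smooth driver, which upgrades the conclusion: the one-sided limits exist canonically, so convergence of $f^{\pm,\varepsilon}_t(\psi)$ as $\varepsilon\to 0$ holds along the \emph{full} net, making the diagonal extraction you invoke superfluous (the averages of a BV function converge to its one-sided traces without passing to subsequences). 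That buys you a $t$-uniform, in fact subsequence-free, definition of $f^\pm_t$, at the cost of an extra structural argument. What you must still invoke Lemma~\ref{kinetcomp} for, as you correctly do, is the verification that the limit is again a kinetic function (i.e.\ a Young measure limit with the vanishing-at-infinity property) together with the moment estimate~\eqref{est:23e}; the BV argument alone does not give that. In sum: both proofs are sound, the paper's is shorter, yours is conceptually cleaner in pinning down $f^\pm_t$ as genuine one-sided traces rather than arbitrary subsequential limits.
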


\begin{proof}
Both ${f}^{+,\varepsilon}$ and ${f}^{-,\varepsilon}$ are kinetic functions on $\R^N$, with associated Young measures given by:
$${\nu}^{+,\varepsilon}_{t}= \frac{1}{\varepsilon} \int_t^{t+\varepsilon} {\nu}_{s} \,\dd s,\qquad {\nu}^{-,\varepsilon}_{t}= \frac{1}{\varepsilon} \int^t_{t-\varepsilon} {\nu}_{s} \,\dd s.
$$
\rmk{Furthermore, recall that $\nu_{t,x}(d\xi)=\delta_{u_{t}(x)}(d\xi)$.
Hence, due to the fact that $u$ sits in the space $L^\infty(0,T;L^1(\R^N))\cap L^\infty(0,T;L^2(\R^N))$, the following relation holds true:}
\begin{equation*}
\begin{split}
\int_{\R^N} \int_\R \big(|\xi| +|\xi|^2\big)\, {\nu}^{\pm,\varepsilon}_{t,x}(\dd \xi)\,\dd x &\leq \esssup_{t\in [0,T]} \, \int_{\R^N} \int_\R \big(|\xi| +|\xi|^2\big) \, {\nu}_{t,x}(\dd \xi)\,\dd x\leq  \|u\|_{L^\infty_t L^1_{x}}+\|u\|^2_{L_t^\infty L^2_x}.
\end{split}
\end{equation*}
Thus, we can apply Lemma \ref{kinetcomp} to deduce the existence of $f^+_t, f^-_t$, which are kinetic functions on $\R^N$ such that, along a subsequence that possibly depends on $t$,
\begin{equation}\label{eq:33}
f^{+,\varepsilon}_t\overset{*}{\rightharpoonup} f^+_t,\qquad f^{-,\varepsilon}_t\overset{*}{\rightharpoonup}f^-_t\qquad\text{in}\qquad L^\infty(\R^{N+1}).
\end{equation}
Moreover, the associated Young measures $\nu^\pm_t$ satisfy \eqref{est:23e}.

\rmk{It remains to show that they also fulfill $f^+_t=f^-_t=f_t$ for a.e. $t\in(0,T).$
According to the classical Lebesgue differentiation theorem, there exists a set of full measure $E_\psi\subset[0,T]$ possibly depending on $\psi$, such that
\begin{equation*}
\lim_{\varepsilon\rightarrow 0}\frac{1}{\varepsilon}\int_{t}^{t+\varepsilon} f_s(\psi)\,\dif s= f_{t}(\psi)\qquad\text{for all } t\in E_\psi,
\end{equation*}
for any $\psi \in L^1(\mr^{N+1})$. Therefore, in view of \eqref{eq:33} we deduce that for every $\psi\in L^1(\R^{N+1})$ it holds
$$
f^+_t(\psi)=f_t(\psi) \qquad\text{for all } t\in E_\psi.
$$
As the space $L^1(\mr^{N+1})$ is separable (more precisely it contains a countable set $\mathcal{D}$ that separates points of $L^\infty(\R^{N+1})$), we deduce that $f^+_t=f_t$ for all $t$ from  the set of full measure $\cap_{\psi\in\mathcal{D}} E_\psi$. The same argument applied to $f^-$ then completes the claim.}
\end{proof}

\begin{proof}[Proof of Lemma \ref{lemma:classical-to-rough}]
As a consequence of Lemma \ref{lemma:left-right}, for all $\varphi\in C^\infty_c(\R^{N+1})$, 
\begin{align*}
\delta f^{+}_{st}(\varphi)&=- \int_s^t f_r(V\cdot\nabla\varphi)\,\dd z_r-m(\ind_{(s,t]}\partial_\xi\varphi),\\
\delta f^{-}_{st}(\varphi)&=- \int_s^t f_r(V\cdot\nabla\varphi)\,\dd z_r-m(\ind_{[s,t)}\partial_\xi\varphi),
\end{align*}
hold true for every $s,t\in(0,T).$ This can be obtained by testing \eqref{eq:weak} by $\psi^{+,\varepsilon}\varphi$ and $\psi^{-,\varepsilon}\varphi$ where $\psi^{+,\varepsilon}$ and $ \psi^{-,\varepsilon}$ are a suitable approximations of $\ind_{[0,t]}$ and $\1_{[0,t)}$, respectively, such as
\begin{align}\label{eq:23a}
\psi^{+,\varepsilon}_r:=\begin{cases}
1, & \text{if}\  r\in [0,t],\\
1-\frac{r-t}{\varepsilon}, & \text{if}\ r\in [t,t+\varepsilon], \\
0, & \text{if} \ r\in [t+\varepsilon,T],
\end{cases}
\qquad
\psi^{-,\varepsilon}_r:=\begin{cases}
1, & \text{if}\  r\in [0,t-\varepsilon],\\
-\frac{s-t}{\varepsilon}, & \text{if}\ r\in [t-\varepsilon,t], \\
0, & \text{if} \ r\in [t,T],
\end{cases}
\end{align}
and passing to the limit in $\varepsilon$.
Therefore, we arrive at the equivalent formulation
\begin{align*}
\begin{aligned}
\delta f^{+}_{st}(\varphi)&=f^{+}_s(A^{1,*}_{st}\varphi)+f^{+}_s(A^{2,*}_{st}\varphi)+f^{+,\natural}_{st}(\varphi)-m(\ind_{(s,t]}\partial_\xi\varphi),\\
\delta f^{-}_{st}(\varphi)&=f^{-}_s(A^{1,*}_{st}\varphi)+f^{-}_s(A^{2,*}_{st}\varphi)+f^{-,\natural}_{st}(\varphi)-m(\ind_{[s,t)}\partial_\xi\varphi),
\end{aligned}
\end{align*}
which holds true in the scale $(E_n)$ with $E_n=W^{n,1}(\R^{N+1})\cap W^{n,\infty}(\R^{N+1})$ for remainders $f^{\pm,\natural}$ given by
$$
f^{\pm,\natural}_{st}(\varphi) = - f^{\pm}_s(A^{2,*}_{st}\varphi) + \int_s^t (f^{\pm}_r(V\cdot\nabla\varphi)-f^{\pm}_s(V\cdot\nabla\varphi))\dd z_{r}.
$$
Where we have replaced $f$ by $f^\pm$  in the above Riemann-Stieltjes  since $f^{+}_t=f^{-}_t=f_t$ for a.e. $t\in(0,T)$. Plugging into the integral the equation for $f^\pm$ we get
$$
f^{\pm,\natural}_{st}(\varphi) = - f^{\pm}_s(A^{2,*}_{st}\varphi) - \int_s^t \left[ \int_s^r f_w(V\cdot\nabla (V\cdot\nabla\varphi))\,\dd z_w+m(\ind_{(s,r]}\partial_\xi (V\cdot\nabla\varphi))\right]\dd z_{r}.
$$
Inspection of this expression shows that $f^{\pm,\natural} \in V_{2,\text{loc}}^{p}(E_{-2})$ for any $p\geq 1/2$.  
Moreover, it can be proved, cf. \cite[Remark 12]{DV} or \cite[Lemma 4.3]{H16}, that the kinetic measures $m$ do not have atoms at $t=0$ and consequently $f^{+}_0=f_0$.
\end{proof}

\section{Rough conservation laws II: Uniqueness and reduction}
\label{sec:uniq}

\subsection{General strategy}\label{sec:strategy-uniq}

\rmk{Before we turn to the details, let us briefly sketch out the main steps of our method toward uniqueness for the problem (\ref{eq1}), interpreted through the above kinetic formulation. The starting observation is, in fact, the basic identity
$$(u^1-u^2)^+=\int_\R\ind_{u^1>\xi}(1-\ind_{u^2>\xi})\,\dif \xi \quad , \quad u^1,u^2\in \R \ ,$$
which, applied to two kinetic functions ${u}^1,{u}^2$, immediately yields 
$$
\big\|\big(u^1_t -u^2_t\big)^+\big\|_{L^1_x}=\| f^{1}_t(1-{f}^{2}_t)\|_{L^1_{x,\xi}} \ ,
$$
where $f^1,f^2$ stands for the generalized kinetic functions associated with $u^1,u^2$.}

\rmk{We are thus interested in a estimate for the product $f^{1}(1-{f}^{2})$, and to this end, we will naturally try to understand the dynamics of this path. At this point, observe that owing to (\ref{eq:delta-f-plus-weak})-(\ref{eq:delta-f-minus-weak}), the two paths $f^1$ and $\bar{f}^2:=1-f^2$ (or rather their representatives $f^{i,\pm}$) are solutions of rough equations driven by the same driver $\mathbf{A}$ (we will carefully justify this assertion below).}

\rmk{This brings us back to the same setting of Section \ref{subsec:prelim}, and following the ideas therein described, we intend to display a {\it tensorization} procedure based on the consideration of the path
$$F := f^{1,+} \otimes \bar{f}^{2,+} \ .$$}

\rmk{The main steps of the analysis are those outlined in Section \ref{subsec:prelim}, namely:}

\smallskip

\noindent
\rmk{$(i)$ Derive the specific rough equation satisfied by $F$ (that is, the corresponding version of (\ref{eq:tensor})), with clear identification of a drift term $Q$ and a remainder $F^\natural$. This is the purpose of Section \ref{subsec:tens} below, and, as expected, it will involve the tensorized driver $\mathbf{\Gamma}$ derived from $\mathbf{A}$ along (\ref{driver:tens}).}

\smallskip

\noindent
\rmk{$(ii)$ Apply the blow-up transformation (\ref{blow-up-trans}) to the equation and, in order to use our a priori estimate on the remainder, try to find suitable bounds for the (transformed) drift term $Q^\ep:=T_\ep^\ast Q$, as well as for the supremum of $F^\ep:=T_\ep^\ast F$. These issues will be adressed in Sections \ref{subsec:f-ep} and \ref{subsec:q-ep}.}

\smallskip

\noindent
\rmk{$(iii)$ Combine Theorem \ref{theo:apriori} with the renormalizability property of the tensorized driver (as proved in Proposition \ref{prop:renorm}) in order to estimate the (transformed) remainder $F^{\natural,\ep}:=T_\ep^\ast F^{\natural}$. Then use this control to pass to the diagonal (that is to let $\ep$ tend to $0$) and, with the limit equation at hand, try to settle a rough Gronwall argument toward the desired estimate (in a way similar way to the example treated in Section \ref{subsec:first-application}). This will be the topic of Section \ref{subsec:diago}, and it will finally lead us to the expected uniqueness property.}

\

\rmk{The principles of this three-step procedure are thus quite general, and they could certainly be used as guidelines for other rough-PDE models.} 

\smallskip

\rmk{Nevertheless, when it turns to its rigorous implementation (at least in our case), the above scheme happens to be the source of (painful) technical difficulties related to the \enquote{localizability} of the test-functions, that is the control of their support. As we already evoked it in Remark~\ref{rk:local-x-plus}, these difficulties will force us to consider the sophisticated scale $\mathcal{E}_{R,n}$ introduced in (\ref{scale-e-r-n}), and thus to handle an additional parameter $R\geq 1$ throughout the procedure (on top of the blow-up parameter $\ep$). The dependence of the resulting controls with respect to $R$ will be removed afterwards, via a (rough) Gronwall-type argument.}

\rmk{Note finally that the construction of a smoothing (in the sense of Definition \ref{def:smoothing}) for the \enquote{localized} scale $\mathcal{E}_{R,n}$ is not an as easy task as in the situation treated in Section \ref{subsec:first-application}: we will go back to this problem in Section \ref{subsec:smoothing} and therein construct a suitable family of operators.}

\

\rmk{\it{From now on and for the rest of the section, let $f^1,f^2$ be two generalized kinetic solutions to \eqref{eq1}, and fix two associated measures $m^1,m^2$ (along Definition \ref{genkinsol}(iii)). Besides, we will use the following notation: $\bar f:=1-f$ as well as $\mathbf{x}:=(x,\xi)\in\R^{N+1}$, $\mathbf{y}:=(y,\zeta)\in\R^{N+1}$.}}

\subsection{Tensorization}
\label{subsec:tens}
\rmk{We here intend to implement Step $(i)$ of the above-described procedure, that is to derive the specific rough equation governing the path} $F = f^{1,+} \otimes \bar{f}^{2,+}$ defined on $[0,T] \times \R ^{N+1}\times\R ^{N+1}$ by
\begin{equation}\label{eq:tensor-F}
F_t(\mathbf{x},\mathbf{y}): = f^{1,+}_t(\mathbf{x})\bar f^{2,+}_t(\mathbf{y}) .
\end{equation}
\rmk{For the moment, let us consider the general} scale of spaces $ \mathcal{E}_n^\otimes = W^{n,1}(\R^{N+1} \times \R^{N+1}) \cap W^{n,\infty}(\R^{N+1} \times \R^{N+1})$ \rmk{($0\leq n\leq 3$)} with norms
$$
\|\Phi\|_{\mathcal{E}_n^\otimes} = \|\Phi\|_{W^{n,1}(\R^{N+1} \times \R^{N+1})} + \|\Phi\|_{W^{n,\infty}(\R^{N+1} \times \R^{N+1})} \ ,
$$
\rmk{and recall that the tensorized driver $\mathbf{\Gamma}=(\Gamma^1,\Gamma^2)$ is defined along the formulas
\begin{equation*}
\Gamma_{st}^1 := A^1_{st}  \otimes \mathbb{I}+\mathbb{I} \otimes A^1_{st} \quad , \quad
\Gamma_{st}^2 := A^2_{st} \otimes \mathbb{I}+\mathbb{I} \otimes A^2_{st} + A^1_{st} \otimes A^1_{st} \ .
\end{equation*}}

\begin{proposition}\label{prop:tens-eq-conserv}
\rmk{In the above setting, and} for all test functions $\Phi\in \mathcal{E}_{3}^\otimes$, the following relation is satisfied:
\begin{equation}\label{eq:dcp-delta-F-weak}
\delta F_{st}(\Phi) = \delta Q_{st}(\Phi) + F_s((\Gamma_{st}^{1,*}  + \Gamma_{st}^{2,*})\Phi) +  F^{ \natural}_{st}(\Phi),
\end{equation}
where $F^{ \natural} \in V_2^{q/3}(\mathcal{E}_{-3}^\otimes)$ and where $Q$ is the path defined (in the distributional sense) as:
\begin{equation}\label{eq:def-Q-distrib}
Q_{t}: = Q^{1}_{t} - Q^{2}_{t} 
=\int_{[0,t]} \partial_\xi m^1_{dr} \otimes \bar f^{2,-}_r -  \int_{[0,t]} f^{1,+}_r \otimes \partial_\xi m^2_{dr}.
\end{equation}
\end{proposition}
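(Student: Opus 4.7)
The strategy is to implement at the rough-PDE level the Leibniz-type expansion \eqref{eq:tensor-heat-1} and then to identify, in the resulting expansion, which terms are $\Gamma^{*}$-contributions, which are drift contributions gathered in $\delta Q$, and which form the remainder $F^{\natural}$. My starting point would be to derive the companion equation for $\bar f^{2,+}=1-f^{2,+}$: since $A^{1,*}_{st}\varphi$ and $A^{2,*}_{st}\varphi$ are divergences of compactly supported fields by \eqref{eq:def-A-star-conserv}, testing them against the constant $1$ gives $0$, so that
\[
\delta \bar f^{2,+}_{st}(\psi)=\bar f^{2,+}_s\bigl(A^{1,*}_{st}\psi+A^{2,*}_{st}\psi\bigr)+m^2\bigl(\ind_{(s,t]}\partial_\zeta\psi\bigr)-f^{2,+,\natural}_{st}(\psi).
\]
Then, for tensor-product test functions $\Phi=\varphi\otimes\psi$ with $\varphi,\psi\in C^\infty_c(\R^{N+1})$, the exact identity $\delta F_{st}(\Phi)=\delta f^{1,+}_{st}(\varphi)\,\bar f^{2,+}_s(\psi)+f^{1,+}_s(\varphi)\,\delta \bar f^{2,+}_{st}(\psi)+\delta f^{1,+}_{st}(\varphi)\,\delta \bar f^{2,+}_{st}(\psi)$ holds, and substituting the two rough equations produces a lengthy expansion that I would sort by its order in $\mathbf{Z}$.

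The linear-in-$Z^{1}$ terms immediately assemble into $F_s\bigl((A^{1,*}_{st}\otimes\mathbb I+\mathbb I\otimes A^{1,*}_{st})\Phi\bigr)=F_s(\Gamma^{1,*}_{st}\Phi)$; the $Z^{2}$-terms yield $F_s\bigl((A^{2,*}_{st}\otimes\mathbb I+\mathbb I\otimes A^{2,*}_{st})\Phi\bigr)$; and the cross-product $\delta f^{1,+}_{st}(\varphi)\,\delta\bar f^{2,+}_{st}(\psi)$ contributes at leading order $F_s\bigl((A^{1,*}_{st}\otimes A^{1,*}_{st})\Phi\bigr)$, which together with the previous piece reconstitutes the full $F_s(\Gamma^{2,*}_{st}\Phi)$ thanks to the definition \eqref{driver:tens}. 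For the drift, the $m^{1}$-contribution from the first line reads $-\int_{(s,t]}m^1(\partial_\xi\varphi,dr)\,\bar f^{2,+}_s(\psi)$, which I would rewrite as
\[
-\!\int_{(s,t]}m^1(\partial_\xi\varphi,dr)\,\bar f^{2,-}_r(\psi)\;-\;\int_{(s,t]}m^1(\partial_\xi\varphi,dr)\bigl[\bar f^{2,+}_s(\psi)-\bar f^{2,-}_r(\psi)\bigr];
\]
after one integration by parts in $\xi$, the first piece is precisely $\delta Q^1_{st}(\Phi)$, while the second, being the product of a $1$-variation measure increment with a $\bar f^{2}$-increment of size $O(\omega_Z(s,r)^{1/p})+O(\omega_{m^2}(s,r))$, is of order strictly greater than $1$ and so joins $F^\natural$. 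A symmetric treatment of the $m^2$ contribution produces $-\delta Q^2_{st}$, recovering the full $\delta Q_{st}(\Phi)$ announced in \eqref{eq:def-Q-distrib}.

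Everything that is left—namely, products mixing two rough-path factors with an $f^{i,+,\natural}$-factor, the residual measure-against-increment terms above, and the diagonal term $f^{1,+,\natural}\otimes\delta\bar f^{2,+}$—is then collected into $F^\natural_{st}(\Phi)$; using the a priori bounds on $f^{i,+,\natural}$ furnished by Theorem \ref{theo:apriori} together with the superadditivity of the relevant controls (finite by virtue of Hypothesis \ref{hyp:V} and the finiteness of the $m^i$), a routine exponent count shows that $F^\natural\in V^{q/3}_2(\mathcal E_{-3}^\otimes)$. Finally, I would extend the identity from tensor products $\varphi\otimes\psi$ to arbitrary $\Phi\in\mathcal E_3^\otimes$ by linearity and density, exploiting the continuity, with respect to the $\mathcal E_3^\otimes$-topology, of each of the five term-groups on the right-hand side of \eqref{eq:dcp-delta-F-weak}. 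The main obstacle I anticipate is not the algebra (which is essentially a rough-path Itô formula) but the bookkeeping of $F^\natural$: one must verify that every residual loses at most three degrees of test-function regularity (so that it makes sense against $\mathcal E_3^\otimes$), and at the same time gains strictly more than one superadditive degree in time. The mildly delicate residuals are those coming from the $m$-drifts, where the derivative $\partial_\xi$ or $\partial_\zeta$ landing on $\Phi$ is precisely what forces the scale to be pushed up from $\mathcal E_2^\otimes$ to $\mathcal E_3^\otimes$.
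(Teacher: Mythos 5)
Your algebraic route is a valid and genuinely different variant of the paper's argument. Where the paper isolates the double-measure term $\partial_\xi m^1(\1_{(s,t]})\otimes\partial_\xi m^2(\1_{(s,t]})$ and performs a BV integration-by-parts on it, then substitutes the dynamics of $f^{1,+}$ and $f^{2,-}$ to produce the cancellations that manufacture $Q$, you instead \enquote{unfreeze} the frozen-time factors directly, writing $\bar f^{2,+}_s=\bar f^{2,-}_r+(\bar f^{2,+}_s-\bar f^{2,-}_r)$ under the $m^1$-integral and $f^{1,+}_s=f^{1,+}_r+(f^{1,+}_s-f^{1,+}_r)$ under the $m^2$-integral, and you relegate the double-measure term as a whole (time exponent $2$, two test-function derivatives) to $F^\natural$. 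This is a cleaner path to $\delta Q$, avoids the delicate cancellation bookkeeping around $I_1,I_2$, and produces the same $\Gamma$, $Q$ and (necessarily, by uniqueness) the same $F^\natural$. Your identification of $\Gamma^1$, the $A^2\otimes\mathbb I$, $\mathbb I\otimes A^2$ and $A^1\otimes A^1$ contributions to $\Gamma^2$, and of $\delta Q^1_{st}$ and $-\delta Q^2_{st}$, matches the paper exactly.

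There is, however, a genuine gap in your treatment of the remainder. Your criterion—\enquote{verify that every residual loses at most three degrees of test-function regularity}—actually fails for several of the residuals you produce, and the paper's does too. For instance, $A^2_{st}f^{1,+}_s\otimes A^2_{st}\bar f^{2,+}_s$ costs four derivatives; $f^{1,+,\natural}_{st}\otimes A^1_{st}\bar f^{2,+}_s$ costs four; and the piece of your freezing error $\int_{(s,t]}m^1(\partial_\xi\varphi,dr)\,f^{2,-,\natural}_{sr}(\psi)$ costs $\|\varphi\|_1\|\psi\|_3$, i.e.\ four derivatives of the joint test function, which cannot be controlled by $\|\Phi\|_{\mathcal E_3^\otimes}$ for a general $\Phi$. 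The compensating feature is that precisely these terms carry \emph{extra} time regularity, and the paper exploits this by splitting $F^\natural_{st}(\Phi)=F^\natural_{st}(J^\eta\Phi)+F^\natural_{st}((\id-J^\eta)\Phi)$ and optimizing over $\eta$, exactly as in the proof of Theorem~\ref{theo:apriori}: on $J^\eta\Phi$ one pays negative powers of $\eta$ for the extra derivatives but gains from the higher time exponent, while on $(\id-J^\eta)\Phi$ one uses the equation directly and trades positive powers of $\eta$ against the weaker time exponent. Your outline should replace the \enquote{routine exponent count} by this interpolation step, since without it the individual residuals do not even make sense as elements of $\mathcal E_{-3}^\otimes$.
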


\begin{proof}
Let us first work out the algebraic form of the equation governing $F$ in a formal way. Namely, according to relations \eqref{eq:delta-f-plus-weak}, the equations describing the dynamics of $f^{1,+}$ and $\bar{f}^{2,+}$  in the distributional sense are given by:
\begin{eqnarray}
\delta f_{st}^{1,+} &=&
A_{st}^{1} f_{s}^{1,+} + A_{st}^{2} f_{s}^{1,+} + \partial_{\xi}m^{1}\lp \1_{(s,t]} \rp 
+ f_{st}^{1,+,\natural} \label{eq:fst-1}\\
\delta \bar{f}_{st}^{2,+} &=&
A_{st}^{1} \bar{f}_{s}^{2,+} + A_{st}^{2} \bar{f}_{s}^{2,+} - \partial_{\xi}m^{2}\lp \1_{(s,t]} \rp 
- f_{st}^{2,+,\natural}. \label{eq:bar-fst-2}
\end{eqnarray}
In order to derive the equation for $F$, we tensorize the equation for $f^{1,+}$ with the equation for $\bar f^{2,+}$.  Similarly to \eqref{eq:tensor-heat-1} we obtain the following relation, understood in the sense of distributions over $\R^{N+1}\times\R^{N+1}$:
\begin{equation*}
\delta F_{st} = \delta f_{st}^{1,+} \otimes \bar{f}_{s}^{2,+} +  f_{s}^{1,+} \otimes \delta \bar{f}_{st}^{2,+} 
+ \delta f_{st}^{1,+} \otimes \delta \bar{f}_{st}^{2,+}.
\end{equation*}
Expanding $\delta f_{st}^{1,+}$ and $\delta \bar{f}_{st}^{2,+}$ above according to \eqref{eq:fst-1} and \eqref{eq:bar-fst-2}, we end up with:
\begin{multline}\label{eq:dcp-delta-F}
\delta F_{s t} 
= \Gamma^1_{s t} F_s + \Gamma^2_{s t} F_s \\
- f^{1, +}_s \otimes \partial_\xi m^2 (\1_{(s, t]}) - \partial_\xi m^1 (1_{(s, t]}) \otimes \partial_\xi m^2 (\1_{(s, t]}) +\partial_\xi m^1
   (\1_{(s, t]})  \otimes \bar{f}_s^{2, +} + R^1_{s t} ,
\end{multline}
where all the other terms have been included in the
remainder $R^1_{s t}$. More explicitly 
\[ R^1_{s t} 
=  A^2_{s t} f^{1, +}_s \otimes A^1_{s t}  \bar{f}^{2,
   +}_s 
   + A^1_{s t}  f^{1, +}_s \otimes  A^2_{s t} \bar{f}^{2,
   +}_s  +  A^2_{s t} f^{1, +}_s \otimes  A^2_{s t} \bar{f}^{2,
   +}_s \]
   \[ + f^{1, +, \natural}_{s t} \otimes \overline{f_{}}_s^{2, +}  - f^{1, +}_s \otimes f^{2, +, \natural}_{s t}  \]
\[     - (A^1_{s t} + A^2_{s t}) f^{1, +}_s \otimes \partial_\xi m^2 (\1_{(s, t]}) +\partial_\xi m^1 (\1_{(s, t]}) \otimes (A^1_{s t} + A^2_{s t}) \bar{f}^{2, +}_s
    -\partial_\xi  m^1 (\1_{(s,
   t]}) \otimes \partial_\xi m^2 (\1_{(s, t]})  
    \]
\[     - (A^1_{s t} + A^2_{s t}) f^{1, +}_s \otimes f^{2, +, \natural}_{s t} + f^{1, +, \natural}_{s t} \otimes (A^1_{s t} + A^2_{s t}) \bar{f}^{2, +}_s  \]
\[  -\partial_\xi m^1 (\1_{(s, t]}) \otimes f^{2, +, \natural}_{s t}
- f^{1,   +, \natural}_{s t} \otimes \partial_\xi m^2 (\1_{(s, t]}) 
- f^{1, +, \natural}_{s t} \otimes f^{2, +,   \natural}_{s t} 
 .\]
Let us further decompose the term $I:=\partial_\xi m^1 (\1_{(s, t]}) \otimes \partial_\xi m^2(\1_{(s, t]})$ in \eqref{eq:dcp-delta-F}.
The integration by parts formula for two general BV functions $A$ and $B$ reads as
\[ A_t B_t = A_s B_s + \int_{(s, t]} A_r \mathd B_r + \int_{(s, t]} B_{r -}
   \mathd A_r. 
   \]
Applying this identity to $I$, we obtain $I=I_{1}+I_{2}$ with
\begin{equation*}
I_{1}=- \int_{(s, t]} \partial_\xi m^1 (\1_{(s, r]})\otimes \mathd \partial_\xi m_r^2,
\quad\text{and}\quad
I_{2}= - \int_{(s, t]} \mathd \partial_\xi m^1_r \otimes  \partial_\xi m^2 (\1_{(s, r)}).
\end{equation*}
We now handle $I_{1}$ and $I_{2}$ separately. 
For the term $I_{1}$ we invoke again equation \eqref{eq:fst-1} describing the dynamics of $f^{1, +}$, which yields
\begin{eqnarray*}
I_1 &=& -\int_{(s, t]} (\delta f^{1, +}_{s r} - (A^1_{s r} + A^2_{s r}) f^{1,
   +}_s - f^{1, +, \natural}_{s r}) \otimes \mathd \partial_\xi m^2_r = -\int_{(s, t]} \delta f^{1,
   +}_{s r} \otimes \mathd\partial_\xi  m^2 + R^2_{s t} \\
&=&
-\int_{(s, t]} f^{1,+}_{r} \otimes \mathd\partial_\xi  m^2 
+ f^{1,+}_s  \otimes\partial_\xi m^2(\1_{(s,t]}) + R^2_{s t}.
\end{eqnarray*}
Similarly, we let the patient reader check that the equivalent of
relation \eqref{eq:bar-fst-2} for $\delta \bar{f}_{st}^{-}$, derived from~\eqref{eq:delta-f-minus-weak}, leads to
\begin{equation*}
I_{2}=
\int_{(s, t]} \bar{f}^{2, -}_r \otimes \mathd \partial_\xi m^1_r -
   \overline{f^{}}^{2, -}_s \otimes \partial_\xi m^1 (\1_{(s, t]}) +\partial_\xi m^2 (\1_{\{ s \}}) \otimes \partial_\xi m^1 (\1_{(s, t]})
   + R^3_{s t}.
\end{equation*}
In addition, observe that $f^{2, +}_s - f^{2, -}_s = \partial_\xi m^2 (\1_{\{ s \}})$. Hence $\bar{f}^{2,
-}_s- \partial_\xi m (\1_{\{ s \}}) = \bar{f}^{2, +}_s$ and we obtain
\[ 
I_2 = \int_{(s, t]} \bar{f}^{2, -}_r \otimes \mathd \partial_\xi m^1_r -\overline{f^{}}^{2,
   +}_s  \otimes \partial_\xi m^1 (\1_{(s, t]}) + R^3_{s t}. 
   \]
Plugging the  relations we have obtained for $I=I_{1}+I_{2}$  into \eqref{eq:dcp-delta-F} and looking for cancellations, we end up with the following expression for $\delta F$:
\begin{equation*}
\delta F_{s t} 
= \Gamma^1_{s t} F_s + \Gamma^2_{s t} F_s
+\int_{(s, t]} \mathd \partial_\xi m^1_r  \otimes \bar{f}^{2, -}_r
-\int_{(s, t]} f^{1,+}_{r} \otimes \mathd\partial_\xi  m^2 
+F^\natural_{s t}.
\end{equation*}
with $F^\natural_{s t} = R^1_{s t}+R^2_{s t}+R^3_{s t}$.
Having the definition \eqref{eq:def-Q-distrib} of $Q$ in mind, this proves equation~\eqref{eq:dcp-delta-F-weak} in the distributional sense, for test functions $\Phi \in C^\infty_c(\R^{N+1}\times\R^{N+1})$ since distributions can act in each set of variables separately. We \rmk{now establish the claimed regularity for $F^\natural$ through an interpolation argument. To this end, consider the smoothing $(J^\eta)_{\eta\in (0,1)}$ (with respect to $(\mathcal{E}_n^\otimes)_{0\leq n\leq 3}$) derived from the same basic convolution procedure as in (\ref{smoothing-convolution}), and for $\Phi \in C^\infty_c(\R^{N+1}\times\R^{N+1})$, write}
\[
F^\natural_{s t}(\Phi) = F^\natural_{s t}(J^\eta \Phi) + F^\natural_{s t}((\id -J^\eta) \Phi) \ .
\]
The first term will be estimated with the decomposition into the various remainder terms
$
F^\natural_{s t}(J^\eta \Phi)  = R^1_{s t}(J^\eta \Phi) +  R^2_{s t}(J^\eta \Phi) + R^3_{s t}(J^\eta \Phi).
$
Close inspection of the precise form of $R^i$ for $i=1,2,3$ shows that the terms which require more than three derivatives from $J^\eta \Phi$ (resulting in negative powers of $\eta$) are also more regular in time. On the other hand, $ F^\natural_{s t}((\id -J^\eta) \Phi)$ can be estimated directly from the equation~\eqref{eq:dcp-delta-F-weak} and while the various terms show less time regularity they also require less than three derivatives from $(\id -J^\eta) \Phi$ which in turn become positive powers of $\eta$. Reasoning as in the proof of \rmk{Theorem \ref{theo:apriori}} we can obtain a suitable choice for $\eta$ which shows that there exists a \rmk{regular} control $\omega_{\natural\natural}$, depending on the controls for $f^i, m^i, f^{i, \natural}$, $i=1,2$, such that
\begin{equation}
\label{eq:fnatural-est}
F^\natural_{s t}(\Phi) \leq \omega_{\natural \natural}(s,t)^{3/q} \|\Phi\|_{\mathcal{E}_3^\otimes}.
\end{equation}
   
To complete the argument we need to go from the distributional to the variational form of the dynamics of $F$. That is, we need to establish eq.~\eqref{eq:dcp-delta-F-weak} for all $\Phi \in \mathcal{E}^\otimes_3$ and not only for $\Phi \in C^\infty_c(\R^{N+1}\times\R^{N+1})$. In order to do so we observe that $ C^\infty_c(\R^{N+1}\times\R^{N+1})$ is weakly-$\star$ dense in $\mathcal{E}^\otimes_3$. Choosing a sequence $(\Phi_n)_n \subseteq  C^\infty_c(\R^{N+1}\times\R^{N+1})$ weakly-$\star$ converging to $\Phi \in \mathcal{E}^\otimes_3$ we see that all the terms in eq.~\eqref{eq:dcp-delta-F-weak} apart from the remainder $F^\natural$ converge. Consequently also the remainder converges and it satisfies the required estimates by~\eqref{eq:fnatural-est}.
\end{proof}

\rmk{Let us now turn to the implementation of Step $(ii)$ of the procedure described in Section~\ref{sec:strategy-uniq}. We recall that the blow-up transformation $(T_\ep)_{\ep\in (0,1)}$ has been introduced in Section \ref{subsec:prelim}, together with the explicit description of the related transforms $T_{\ep}^{*},T_{\ep}^{-1}$ (see \eqref{eq:def-T-eps-star} and \eqref{eq:def-T-eps-inverse}). Setting
\begin{equation}\label{eq:def-F-Q-eps}
\mathbf{\Gamma}^{\ast}_{\varepsilon}  := T_\varepsilon^{-1}\mathbf{\Gamma}^{\ast}T_\varepsilon \quad ,
\quad  
Q^\varepsilon := T^{\ast}_\varepsilon Q \quad \text{and}\quad
F^{\natural,\varepsilon}:=T_\varepsilon^* F^\natural \ ,
\end{equation}
it is easy to check that, for each fixed $\ep\in (0,1)$, the transformed path $F^\varepsilon := T^{\ast}_\varepsilon F$ satisfies the rough equation
\begin{equation}\label{eq:eps1}
\delta F^\varepsilon_{st}(\Phi)= \delta Q^\varepsilon_{st}(\Phi)+ F_s^\varepsilon((\Gamma^{1,*}_{\varepsilon,st}+\Gamma^{2,*}_{\varepsilon,st})\Phi)+F_{st}^{\natural,\varepsilon} (\Phi) \ ,
\end{equation}
in the same scale $(\mathcal E_{n}^\otimes)_{0\leq n\leq 3}$ as the original equation (\ref{eq:dcp-delta-F-weak}).}

\rmk{As a preliminary step toward an efficient application of Theorem \ref{theo:apriori} to equation (\ref{eq:eps1}), we need to find suitable bounds for $Q^\ep$ (keeping condition (\ref{cond-mu}) in mind) and for the supremum of $F^\ep$ (which, in view of (\ref{eq:def-omega-I}), will be involved in the resulting estimate). As we mentionned it earlier, the above scale $(\mathcal E_{n}^\otimes)$ turns out to be too general for the derivation of such bounds, and we must restrict our attention to the more specific (set of) localized scale(s) $(\mathcal E_{R,n})_{0\leq n\leq 3}$ ($R\geq 1$) defined in (\ref{scale-e-r-n}), that is
\begin{equation}\label{recall-e-r-n}
\mathcal E_{R,n}:=\left\{ \Phi\in W^{n,\infty}(\R^{N+1}\times\R^{N+1});\,\Phi(\mathbf{x},\mathbf{y})=0\;\text{if}\;\rho_R(\mathbf{x},\mathbf{y})\ge 1 \right\}\ ,
\end{equation}
with $\rho_R( \mathbf x,\mathbf y)^2 = |\mathbf{x}_+|^2/R^2 +  |\mathbf{x}_-|^2$.}

\subsection{\rmk{Construction of a smoothing}}\label{subsec:smoothing}

\rmk{The first condition involved in Theorem \ref{theo:apriori} is the existence of a suitable smoothing (in the sense of Definition \ref{def:smoothing}), and we thus need to exhibit such an object for the above scale $(\mathcal E_{R,n})_{0\leq n\leq 3}$ (for fixed $R\geq 1$).}

\rmk{Just as in Section \ref{subsec:first-application}, a first natural idea here is to turn to a convolution procedure: namely, we introduce a smooth rotation-invariant function $\jmath$ on $\R^{N+1}\times \R^{N+1}$ with support in the ball of radius $\frac12$ and such that $\int_{\R^{N+1}\times \R^{N+1}} \jmath(\mathbf{x},\mathbf{y}) \dd \mathbf{x}\dd \mathbf{y}=1$, and consider $J^\eta$ defined as  
$$
J^\eta \vp (\mathbf{x},\mathbf{y}):=\int_{\R^{N+1}\times \R^{N+1}} \jmath_\eta(\mathbf{x}-\tilde{\mathbf{x}},\mathbf{y}-\tilde{\mathbf{y}}) \vp(\tilde{\mathbf{x}},\tilde{\mathbf{y}})\dd \tilde{\mathbf{x}}\dd \tilde{\mathbf{y}}\ , \ \text{with} \ \  \jmath_\eta(\mathbf{x},\mathbf{y}):=\eta^{-2N-2} \jmath(\eta^{-1}(\mathbf{x},\mathbf{y}))\ .
$$}

\rmk{Unfortunately, the sole consideration of the so-defined family $(J^\eta)_{\eta\in (0,1)}$ is no longer sufficient in this \enquote{localized} setting, since convolution may of course increase the support of test-functions, leading to stability issues. Accordingly, an additional localization procedure must come into the picture.}

\rmk{To this end, let us first introduce a suitable cut-off function:}
\begin{notation}\label{not:theta}
Let $\eta\in (0,\tfrac{1}{3})$ and let $\theta_{\eta}\in C_c^\infty(\R)$ be such that
$$0\leq \theta_{\eta}\leq 1,\qquad\supp\, \theta_{\eta}\subset B_{1-2\eta}\subset \R,\qquad \theta_{\eta}\equiv 1\text{ on }B_{1-3\eta}\subset \R,$$
where for $\al>0$ we set $B_{\al}:=[-\al,\al]$. We also require the following condition on $\theta_{\eta}$:
$$|\nabla^k \theta_{\eta}|\lesssim \eta^{-k},\qquad\text{for}\;k=1,2.$$
Finally, \rmk{for all $R\geq 1$ and $\mathbf{x},\mathbf{y}\in \R^{N+1}$,} we define
\[ 
\Theta_\eta ( \mathbf x,\mathbf y):=\Theta_{R,\eta} ( \mathbf x,\mathbf y) =\theta_{\eta}(\rho_R(\mathbf x,\mathbf y)). 
\]
\end{notation}

\rmk{With these objects in hand}, we have the following technical result.

\begin{proposition}\label{prop:1}
\rmk{Let $\Theta_{\eta}$ be the function introduced in Notation \ref{not:theta}. Then it holds that}
\begin{align}
 \| \Theta_{\eta} \Phi \|_{\mathcal{E}_{R,k}} &\lesssim \| \Phi \|_{\mathcal{E}_{R,k}} \quad \text{for} \ \ k=0,1,2\ ,\label{1}\\
 \| ( 1- \Theta_{\eta} ) \Phi \|_{\mathcal{E}_{R,0}} &\lesssim \eta^k \| \Phi \|_{\mathcal{E}_{R,k}}\quad \text{for} \ \ k=1,2\ ,\label{2}\\
\| ( 1- \Theta_{\eta} ) \Phi \|_{\mathcal{E}_{R,1}} &\lesssim \eta \| \Phi \|_{\mathcal{E}_{R,2}} \ ,\label{4.4}
\end{align}
\rmk{with proportional constants independent of both $\eta$ and $R$. Besides, 
$$\supp(J^\eta\Theta_\eta\Phi)\subset\left\{(\mathbf{x},\mathbf{y})\in\R^{N+1}\times\R^{N+1};\, \rho_R(\mathbf{x},\mathbf{y}) \leq 1 \right\}$$
and} there exists $\Psi_R\in\mathcal{E}_{R,3}$ with  $\sup_{R\geq 1}\|\Psi_R\|_{\mathcal{E}_{R,3}}<\infty$ such that for all $\mathbf{x},\mathbf{y}\in\R^{N+1}$
\begin{align}
\eta^{3-k}|J^\eta\Theta_\eta\Phi(\mathbf{x},\mathbf{y})|&\lesssim \Psi_R(\mathbf{x},\mathbf{y})\|\Phi\|_{\mathcal{E}_{R,k}},\label{3}
\end{align}
where the proportional constant is again independent of both $\eta$ and $R$.
\end{proposition}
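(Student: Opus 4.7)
The plan is to prove the four claims in sequence, unified by a Taylor expansion at the boundary $\{\rho_R=1\}$. Any $\Phi\in\mathcal{E}_{R,k}$ lies in $W^{k,\infty}$ and vanishes identically on $\{\rho_R\ge 1\}$, so continuity of lower-order derivatives (via the Sobolev embedding $W^{k,\infty}\hookrightarrow C^{k-1,1}$) forces $\partial^{j}\Phi$ to vanish on $\{\rho_R=1\}$ for every $j\le k-1$. Integrating $\partial^{k}\Phi$ along a suitable path reaching the boundary then yields a Taylor-type pointwise bound; the choice of path that matters is the radial one, along $\nabla\rho_R/|\nabla\rho_R|$, whose distance to $\{\rho_R=1\}$ is $(1-\rho_R)/|\nabla\rho_R|$ to leading order. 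Uniformity in $R\ge 1$ will follow throughout from the fact that the first three derivatives of $\rho_R$ are bounded uniformly in $R\ge 1$, thanks to the normalisation $|\mathbf{x}_+|^2/R^2$ in its definition.

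To establish \eqref{1}, I will expand $\partial^{\alpha}(\Theta_{\eta}\Phi)$ for $|\alpha|\le k\in\{0,1,2\}$ via Leibniz. Terms carrying no derivative on $\Theta_{\eta}$ are dominated by $\|\Phi\|_{\mathcal{E}_{R,k}}$ directly. Remaining terms carry at least one derivative on $\Theta_{\eta}=\theta_{\eta}\circ\rho_R$ and are supported in the thin shell $\{1-3\eta\le\rho_R\le 1-2\eta\}$; by the chain rule and Notation~\ref{not:theta}, they are dominated by $\eta^{-|\beta|}|\nabla\rho_R|^{|\beta|}$ (plus subleading terms involving higher derivatives of $\rho_R$, which can be treated analogously). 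The radial-Taylor bound for $\partial^{\alpha-\beta}\Phi$ then contributes an $\big((1-\rho_R)/|\nabla\rho_R|\big)^{|\beta|}$-factor, the two powers of $|\nabla\rho_R|$ cancel, and one is left with $\eta^{-|\beta|}(1-\rho_R)^{|\beta|}\lesssim 1$ on the shell. Estimates \eqref{2} and \eqref{4.4} are obtained by applying the same strategy to $(1-\Theta_{\eta})\Phi$, which is supported in the shell $\{1-3\eta\le\rho_R\le 1\}$ where the Taylor bound directly supplies the $\eta^{k}$ and $\eta$ factors.

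The support claim for $J^{\eta}\Theta_{\eta}\Phi$ is then essentially immediate: since $\jmath_{\eta}$ is supported in the Euclidean ball of radius $\eta/2$ and $\rho_R$ is $1$-Lipschitz with respect to the Euclidean metric uniformly in $R\ge 1$, convolution shifts the support by at most $\eta/2$ in $\rho_R$; starting from $\supp(\Theta_{\eta}\Phi)\subset\{\rho_R\le 1-2\eta\}$ we reach $\supp(J^{\eta}\Theta_{\eta}\Phi)\subset\{\rho_R\le 1-3\eta/2\}\subset\{\rho_R\le 1\}$.

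For the pointwise bound \eqref{3}, I will take $\Psi_R(\mathbf{x},\mathbf{y}):=\chi(\rho_R(\mathbf{x},\mathbf{y}))$ for a fixed profile $\chi\colon[0,\infty)\to[0,1]$ of class $C^{2,1}$ that is identically $1$ on $[0,1/2]$, vanishes on $[1,\infty)$, and decays polynomially of order $3$ near $t=1$; the bound $\sup_{R\ge 1}\|\Psi_R\|_{\mathcal{E}_{R,3}}<\infty$ follows from the uniform derivative estimates on $\rho_R$ already invoked above. The pointwise inequality splits into two regions: on $\{\rho_R\le 1/2\}$ we have $\Psi_R\gtrsim 1$ and the trivial bound $|J^{\eta}\Theta_{\eta}\Phi|\le\|\Phi\|_{\mathcal{E}_{R,k}}$ is enough, while on $\{1/2<\rho_R\le 1-3\eta/2\}$ one inserts the radial Taylor bound for $\Phi$ inside the convolution integral and uses the support restriction $\rho_R\le 1-3\eta/2$ to match the prefactor $\eta^{3-k}$ with the decay $(1-\rho_R)^{3}$ of $\chi$, via the elementary inequality $\eta^{3-k}(1-\rho_R)^{k}\le (1-\rho_R)^{3}$ valid on the support. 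I expect this last step to be the main obstacle, as the anisotropy of the ellipsoidal level sets of $\rho_R$ (the Euclidean distance to $\{\rho_R=1\}$ behaves as $(1-\rho_R)$ or $\sqrt{1-\rho_R}$ depending on position and on $R$) has to be reconciled with the scalar profile $\chi$, and this forces one to use the radial-Taylor estimate rather than the naive Euclidean one whenever $R$ is large.
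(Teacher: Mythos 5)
Your radial-Taylor idea is a genuine departure from the paper, which throughout estimates $|\Phi|$ on the transition shell $\{1-3\eta\leq\rho_R\leq1-2\eta\}$ by the \emph{Euclidean} Taylor bound $|\Phi|\lesssim d^{\,k}\|\Phi\|_{W^{k,\infty}}$, $d$ being the Euclidean distance to $\{\rho_R\geq1\}$, together with the assertion that $d\lesssim\eta$; the proof of \eqref{3} then uses the \emph{lower} bound $d\geq\eta$ on $\supp(J^{\eta}\Theta_{\eta}\Phi)$ and builds $\Psi_R$ out of $\min(1,d(\cdot,\partial D_R)^3)$. Your radial cancellation works for the leading chain-rule contribution $\theta_\eta^{(|\beta|)}(\rho_R)(\nabla\rho_R)^{\otimes|\beta|}$ in \eqref{1}, but the subleading terms you dismiss as "analogous" are not. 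For $k=2$ the term $\theta_\eta'(\rho_R)\,\nabla^2\rho_R\cdot\Phi$ has size $\eta^{-1}\cdot|\Phi|$ (using $|\nabla^2\rho_R|\lesssim1$ on the shell), and the radial bound $|\Phi|\lesssim\big(\eta/|\nabla\rho_R|\big)^2\|\Phi\|_{W^{2,\infty}}$ leaves $\eta/|\nabla\rho_R|^2$, which near $\mathbf{x}_-=0$ (where $|\nabla\rho_R|\asymp1/R$) is $\asymp\eta R^2$, unbounded for $R\gg\eta^{-1/2}$. To close it you need the \emph{transverse} estimate: moving in $\mathbf{x}_-$ alone from a point with $\rho_R=c$ reaches $\{\rho_R=1\}$ within Euclidean distance $\sqrt{1-c^2}\lesssim\sqrt{1-c}$, uniformly in $R$, so $d^2\lesssim\eta$ on the shell, and then the second-order Euclidean Taylor gives $|\Phi|\lesssim\eta\,\|\Phi\|_{W^{2,\infty}}$. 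You must switch tools between the two kinds of chain-rule term; a single path does not serve both.

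The second gap is in your treatment of \eqref{2}--\eqref{3}. Those products contain no $\nabla\Theta_\eta$, hence no $|\nabla\rho_R|$ to cancel, and the radial distance $(1-\rho_R)/|\nabla\rho_R|$ is strictly \emph{longer} than the Euclidean one (it is $\asymp\eta R$ near $\mathbf{x}_-=0$), so the radial Taylor does not "directly supply" the $\eta^{\,k}$ factor; it gives a worse bound. Your $\Psi_R=\chi(\rho_R)$ with $\chi(t)\asymp(1-t)^3$ near $t=1$ is correspondingly much smaller near the major axis than the paper's $\min(1,d(\cdot,\partial D_R)^3)$, and the inequality $\eta^{3-k}|J^\eta\Theta_\eta\Phi|\lesssim\Psi_R\|\Phi\|_{\mathcal{E}_{R,k}}$ fails there without extra positive powers of $|\nabla\rho_R|$. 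Your closing paragraph correctly senses that the ellipsoidal anisotropy is the crux; the misstep is in expecting the radial estimate to be the sharper tool everywhere, whereas it is only advantageous when a $\nabla\Theta_\eta$ is present. Note also that the same anisotropy bites the paper's own claim $d\lesssim\eta$: near the major axis, for $R\gg\eta^{-1/2}$, the sharp bound is $d\asymp\sqrt{\eta}$, which makes \eqref{2} with $k=1$ problematic as stated (take $\Phi=\min\big(1,d(\cdot,\{\rho_R\geq1\})\big)$, which is $1$-Lipschitz, vanishes on $\{\rho_R\geq1\}$, yet has $\|(1-\Theta_\eta)\Phi\|_{L^\infty}\gtrsim\sqrt{\eta}$ while $\|\Phi\|_{W^{1,\infty}}\lesssim1$). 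So the difficulty you flag deserves a more careful treatment than either your sketch or the paper's proof currently gives it.
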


\rmk{Before we turn to the proof of these properties, let us observe that they immediately give rise to the expected smoothing:}
\rmk{\begin{corollary}\label{cor:hat-j}
For all $\eta\in (0,1)$ and $R\geq 1$, consider the operator $\hat{J}^\eta$ on $\mathcal{E}_{R,k}$ ($k=0,1,2$) defined as $\hat{J}^\eta(\Phi):=J^\eta(\Theta_{\eta} \Phi)$. Then $\hat{J}^\eta$ defines a smoothing on the scale $(\mathcal{E}_{R,k})$, and conditions \eqref{eq:reg-J-eta-1}-\eqref{eq:reg-J-eta-2} are both satisfied with proportional constants independent of $R$.
\end{corollary}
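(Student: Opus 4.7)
The strategy is to verify each of the eight estimates in Definition \ref{def:smoothing} for $\hat{J}^\eta$ by combining the three bounds \eqref{1}--\eqref{4.4} from Proposition \ref{prop:1} with the classical convolution estimates
$$\|(J^\eta - \id)\Psi\|_{W^{n,\infty}} \lesssim \eta^{m-n}\|\Psi\|_{W^{m,\infty}}, \qquad \|J^\eta \Psi\|_{W^{m,\infty}} \lesssim \eta^{-(m-k)}\|\Psi\|_{W^{k,\infty}} \quad (m\geq k\geq 0),$$
which hold uniformly in $\eta$ and are independent of $R$ by standard mollification theory. Since the norm on $\mathcal{E}_{R,n}$ is simply inherited from the ambient Sobolev space $W^{n,\infty}(\R^{N+1}\times\R^{N+1})$, the whole verification reduces to norm estimates in these Sobolev spaces, while the support condition required by the scale $\mathcal{E}_{R,n}$, namely that $\hat{J}^\eta \Phi$ vanishes on $\{\rho_R\geq 1\}$, is precisely the conclusion already contained in Proposition \ref{prop:1}. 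In particular, $\hat{J}^\eta$ genuinely takes values in $\mathcal{E}_{R,m}$.

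For the three identity-type conditions in \eqref{eq:reg-J-eta-1}, I would use the decomposition
$$\hat{J}^\eta \Phi - \Phi = (J^\eta - \id)(\Theta_\eta \Phi) + (\Theta_\eta - 1)\Phi.$$
On the first summand I would apply the convolution bound on $J^\eta - \id$ followed by \eqref{1} to absorb the cut-off; on the second summand I would invoke \eqref{2} for the pairs $(n,m) \in \{(0,1), (0,2)\}$ and \eqref{4.4} for $(n,m) = (1,2)$. For the five boundedness conditions in \eqref{eq:reg-J-eta-2}, I would chain \eqref{1} (which yields $\|\Theta_\eta \Phi\|_{W^{k,\infty}} \lesssim \|\Phi\|_{\mathcal{E}_{R,k}}$) with the second convolution bound applied to $\Psi = \Theta_\eta \Phi$. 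Note that the estimates involving the target space $\mathcal{E}_{R,3}$ are produced purely by gaining regularity through the mollifier, so no intrinsic estimate in $\mathcal{E}_{R,3}$ is needed.

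The independence of the resulting constants with respect to $R$ is immediate: it is built into Proposition \ref{prop:1} for the cut-off step and is inherent to the pure convolution bounds. The fact that Notation \ref{not:theta} only defines $\Theta_\eta$ for $\eta \in (0,\tfrac13)$ is harmless, since for $\eta \in [\tfrac13,1)$ the required inequalities are trivial after adjusting implicit constants (or one may set $\hat{J}^\eta := \hat{J}^{1/3}$ on that range). The corollary is therefore a direct bookkeeping consequence of Proposition \ref{prop:1}, and no genuine obstacle should arise; the conceptual work has already been carried out in establishing the preliminary estimates on the cut-off.
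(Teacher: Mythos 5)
Your proposal is correct and follows the only natural route: the paper itself presents Corollary \ref{cor:hat-j} as an immediate consequence of Proposition \ref{prop:1} without writing out the verification, and your decomposition $\hat{J}^\eta\Phi-\Phi=(J^\eta-\id)(\Theta_\eta\Phi)+(\Theta_\eta-1)\Phi$ together with \eqref{1}, \eqref{2}, \eqref{4.4} and the standard mollifier bounds (valid since $\jmath$ is rotation-invariant, hence has vanishing first moment) is exactly what one should write to check the eight estimates in Definition \ref{def:smoothing}. Your observation about the range of $\eta$ in Notation \ref{not:theta} is a real (minor) gap in the paper's bookkeeping, and your fix is valid.
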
}

\rmk{\begin{remark}\label{rk:smoothing-bg}
The existence of a smoothing for the localized scale $(\mathcal{E}_{R,n})$ is also an (unproven) assumption in the analysis carried out in \cite{BG} for a rough transport equation (see in particular Section 5.2 in the latter reference). The  statement above thus offers a way to complete these results.
\end{remark}}

\begin{proof}[Proof of Proposition \ref{prop:1}]
In order to prove \eqref{1}, we write
$$\nabla(\Theta_\eta\Phi)=(\nabla \Theta_\eta)\Phi+\Theta_\eta(\nabla\Phi),$$
where the second term does not pose any problem. On the other hand, the term $\nabla\Theta_\eta$ diverges as $\eta^{-1}$ due to the assumptions on $\theta_{\eta}$, namely, it holds
\begin{align*}
\nabla\Theta_\eta&=(\nabla \theta_{\eta})(\rho_R( \mathbf{x}, \mathbf{y}))\nabla \rho_R( \mathbf{x}, \mathbf{y}).
\end{align*}
But due to the support of $\Phi$ \rmk{and the fact that $\theta_{\eta}\equiv 1$ on $B_{1-3\eta}$,} we have that for every $({\mathbf{x}},{\mathbf{y}})$ in the region where $\nabla\Theta_{\eta}\neq0$ there exists $(\tilde{{\mathbf{x}}},\tilde{{\mathbf{y}}})$ outside of support of $\Phi$ such that $|( \mathbf{x}, \mathbf{y})-(\tilde{ \mathbf{x}},\tilde{ \mathbf{y}})|\lesssim \eta$ hence
$$|\Phi({\mathbf{x}},\mathbf{y})|=|\Phi({\mathbf{x}},\mathbf{y})-\Phi(\tilde{\mathbf{x}},\tilde{ \mathbf{y}})|\lesssim \eta\|\Phi\|_{\mathcal{E}_{R,1}}$$
and consequently \eqref{1} follows for $k=1$. If $k=2$, we have
\[ \nabla^{2} ( \Theta_{\eta} \Phi ) = ( \nabla^{2} \Theta_{\eta} ) \Phi +2
   \nabla \Theta_{\eta} \cdummy \nabla \Phi + \Theta_{\eta} \nabla^{2} \Phi,
\]
where the third term does not pose any problem and the second one can be estimated using the reasoning above.
For the first one, we observe that $\nabla_\mathbf{x}^2\Theta_\eta$
diverges like $\eta^{-2}$ but for every $(\mathbf{x}, \mathbf{y})$ such that $\nabla^2\Theta\neq 0$ there exists $(\tilde{{\mathbf{x}}},\tilde{{\mathbf{y}}})$ that lies outside of support of $\Phi$, satisfies $|(\mathbf{x},\mathbf{y})-(\tilde{\mathbf{x}},\tilde{\mathbf{y}})|\lesssim \eta$. Resorting to a second order Taylor expansion and invoking the fact that both $\Phi( \tilde{\mathbf{x}},\tilde{\mathbf{y}} )$ and $\nabla\Phi( \tilde{\mathbf{x}},\tilde{\mathbf{y}} )$ are vanishing we get:
\begin{align}\label{4.4b}
\begin{aligned}
 |\Phi (\mathbf{x},\mathbf{y}) | 
    &=  | \Phi (\mathbf{x},\mathbf{y}) - \Phi ( \tilde{\mathbf{x}},\tilde{\mathbf{y}} ) - \mathrm{D}\Phi ( \tilde{\mathbf{x}},\tilde{\mathbf{y}}) ( (\mathbf{x}, \mathbf{y})-(\tilde{\mathbf{x}},\tilde{ \mathbf{y}})) | \\
    &\lesssim 
   | \mathrm{D}^2\Phi ( z_\mathbf{x},{z}_\mathbf{y} )| |(\mathbf{x},\mathbf{y})-(\tilde{\mathbf{x}},\tilde{\mathbf{y}}) |^{2}  \lesssim\eta^2 \| \Phi \|_{\mathcal{E}_{R,2}} 
   \end{aligned}
\end{align}
and relation \eqref{1} follows.

The same approach leads to \eqref{2}. To be more precise, for $(\mathbf{x},\mathbf{y})$ from the support of $(1-\Theta_\eta)\Phi$ we have using the first and second order Taylor expansion, respectively,
\begin{align}\label{4.4a}
\begin{aligned}
 | ( 1- \Theta_{\eta} ) \Phi (\mathbf{x}, \mathbf{y} ) | 
\lesssim \eta^{k} \| ( 1- \Theta_{\eta} ) \Phi \|_{\mathcal{E}_{R,k}} \lesssim
   \eta^{k} \| \Phi \|_{\mathcal{E}_{R,k}} ,
   \end{aligned}
\end{align}
where we used \eqref{1} for the second inequality.

\rmk{To show  \eqref{4.4}, we write
$$\nabla[(1-\Theta_\eta)\Phi]=-(\nabla\Theta_\eta)\Phi+(1-\Theta_\eta)(\nabla\Phi).$$
The second term can be estimated due to \eqref{4.4a} as follows
$$|(1-\Theta_\eta)(\nabla\Phi)|\lesssim\eta\|\Phi\|_{\mathcal{E}_{R,2}}.$$
For the first term, we recall that even though $\nabla\Theta_\eta$ is of order $\eta^{-1}$, $\Phi$ can be estimated on the support of $\nabla\Theta_\eta$ by $\eta^2$ due to \eqref{4.4b}. This yields
$$|(\nabla\Theta_\eta)\Phi|\lesssim\eta\|\Phi\|_{\mathcal{E}_{R,2}}$$
and completes the proof.}

Let us now prove \eqref{3}. First of all, we observe the trivial estimate, for $k=1,2,$ and $(\mathbf{x}, \mathbf{y})\in \supp(J^\eta\Theta_\eta\Phi)$, 
\begin{equation}\label{4}
\eta^{3-k}|J^\eta\Theta_\eta\Phi(\mathbf{x}, \mathbf{y})|\leq \|J^\eta\Theta_\eta\Phi\|_{L^\infty}\lesssim \|\Phi\|_{\mathcal{E}_{0}}\lesssim \|\Phi\|_{\mathcal{E}_{k}}.
\end{equation}
Next, we note that 
\begin{equation}\label{4-1}
\supp(J^\eta\Theta_\eta\Phi)\subset\left\{(\mathbf{x},\mathbf{y})\in\R^{N+1}\times\R^{N+1};\, \rho_R(\mathbf{x},\mathbf{y}) \leq 1-\eta \right\}
\end{equation}
since $R\ge 1$, and denote
\begin{equation}\label{eq:def-D-R}
D_R:=\left\{(\mathbf{x}, \mathbf{y})\in\R^{N+1}\times\R^{N+1};\, \rho_R(\mathbf{x}, \mathbf{y}) \leq 1 \right\}.
\end{equation}
Let $d(\cdot,\partial D_R)$ denote the distance to its boundary $\partial D_R$. Owing to \eqref{4-1}, it satisfies
$$d((\mathbf{x}, \mathbf{y}),\partial D_R)\geq \eta\qquad\text{for all}\quad ( \mathbf{x}, \mathbf{y})\in \supp(J^\eta\Theta_\eta\Phi).$$
Therefore, performing a Taylor expansion we obtain for $k=1,2,$ and $(\mathbf{x}, \mathbf{y})\in \supp(J^\eta\Theta_\eta\Phi)$, 
\begin{align}\label{5}
\begin{aligned}
\eta^{3-k}|J^\eta\Theta_\eta\Phi( \mathbf{x},\mathbf{y})|&\lesssim \eta^{3-k} |d((\mathbf{x}, \mathbf{y}),\partial D_R)|^k\|J^\eta\Theta_\eta\Phi\|_{W^{k,\infty}}\\
&\lesssim|d((\mathbf{x},\mathbf{y}),\partial D_R)|^3\|\Phi\|_{\mathcal{E}_{R,k}}
\end{aligned}
\end{align}
where we also used \eqref{1} and the fact that $\mathcal{E}_{R,k}$ is embedded in $W^{k,\infty}$. Besides
we may put \eqref{4} and \eqref{5} together to conclude that there exists $\Psi_R\in\mathcal{E}_{R,3}$ satisfying the conditions stated in this Lemma and, in addition,
$$\min\left\{1,|d((\mathbf{x},\mathbf{y}),\partial D_R)|^3\right\}\lesssim \Psi_R(\mathbf{x},\mathbf{y})$$
which completes the proof. For example we can take 
$$
\Psi_R(\mathbf{x}, \mathbf{y}) = \begin{cases}
d(( \mathbf{x}, \mathbf{y}),\partial D_R)^3, &\ \text{ if  $(\mathbf{x},\mathbf{y})\in D_R$ and $d((\mathbf{x},\mathbf{y}),\partial D_R) \le 1/2$} ,\\
						 1,&\ 
\text{ if $(\mathbf{x},\mathbf{y})\in D_R$ and  $d((\mathbf{x},\mathbf{y}),\partial D_R) \ge 3/4$},
\end{cases}
$$
and complete it with a smooth interpolation in between. 
\end{proof}

\subsection{\rmk{Preliminary estimate for the supremum of $F^{\ep}$}}\label{subsec:f-ep}

\rmk{We can now go ahead with our strategy and} state our upper bound for $F^{\ep}$ in \eqref{eq:eps1}.

\begin{proposition}\label{prop:2}
Let $F^{\ep}$ be the increment defined by \eqref{eq:def-F-Q-eps}. Then for all $0\leq s\leq t\leq T$ it holds that
\rmk{\begin{equation}\label{bou-m-ep}
\mathcal{N}[F^\varepsilon;L^\infty(s,t;\mathcal{E}_{R,0}^*)]\lesssim  \mathcal{M}(s,t,\ep,R)\ ,
\end{equation}
where 
\begin{equation}\label{defi-mathcal-m}
\mathcal{M}(s,t,\ep,R):=\ep R^N+\sup_{s\leq r\leq t}\int_{\R^N} \int_\R | \xi |\, \nu^{1,+}_{r,x} (\dd \xi)\dd x+\sup_{s\leq r\leq t} \int_{\R^N}\int_\R | \zeta |\, \nu^{2,+}_{r,y} (\dd \zeta)\dd y
\end{equation}
and the proportional in \eqref{bou-m-ep} constant does not depend on $\ep$ and $R$. We recall that, following Definition \ref{genkinsol}(i), $\nu^{i,\pm}_{t,.}$ stands for the Young measure associated with the kinetic function $f^{i,\pm}_t$.}
\end{proposition}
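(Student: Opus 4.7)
The plan is to argue by duality: one bounds $|F^\ep_t(\Phi)|$ uniformly in $\Phi\in\mathcal{E}_{R,0}$ with $\|\Phi\|_{\mathcal{E}_{R,0}}\leq 1$. Unfolding $F^\ep_t=T^\ast_\ep F_t$ via the explicit form $F_t(\mathbf{x},\mathbf{y})=f^{1,+}_t(\mathbf{x})\,\bar f^{2,+}_t(\mathbf{y})$ and the identity $(T^\ast_\ep F)(\Phi)=F(T_\ep \Phi)$, one obtains
$$F^\ep_t(\Phi)=\int f^{1,+}_t(\mathbf{x}_++\ep\mathbf{x}_-)\,\bar f^{2,+}_t(\mathbf{x}_+-\ep\mathbf{x}_-)\,\Phi(\mathbf{x},\mathbf{y})\,\dd\mathbf{x}\,\dd\mathbf{y}.$$
Performing the blow-up $\tilde{\mathbf{x}}=\mathbf{x}_++\ep\mathbf{x}_-$, $\tilde{\mathbf{y}}=\mathbf{x}_+-\ep\mathbf{x}_-$ generates a Jacobian of order $\ep^{-(N+1)}$; moreover, since $\tilde{\mathbf{x}}+\tilde{\mathbf{y}}=2\mathbf{x}_+$ and $\tilde{\mathbf{x}}-\tilde{\mathbf{y}}=2\ep\mathbf{x}_-$, the support condition $\rho_R(\mathbf{x},\mathbf{y})\leq 1$ on $\Phi$ is contained in $\{|\tilde{\mathbf{x}}+\tilde{\mathbf{y}}|\leq 2R,\;|\tilde{\mathbf{x}}-\tilde{\mathbf{y}}|\leq 2\ep\}$. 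Bounding $|\Phi|\leq\|\Phi\|_{\mathcal{E}_{R,0}}\leq 1$ thus reduces the task to showing
$$\mathcal{J}:=\int\!\!\int_{|\tilde{\mathbf{x}}+\tilde{\mathbf{y}}|\leq 2R,\;|\tilde{\mathbf{x}}-\tilde{\mathbf{y}}|\leq 2\ep} f^{1,+}_t(\tilde{\mathbf{x}})\,\bar f^{2,+}_t(\tilde{\mathbf{y}})\,\dd\tilde{\mathbf{x}}\,\dd\tilde{\mathbf{y}}\lesssim \ep^{N+1}\,\mathcal{M}(s,t,\ep,R).$$

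The heart of the argument is the velocity-variable integral at fixed $(x,y)\in\R^N\times\R^N$. Using the representations $f^{1,+}_t(x,\xi)=\nu^{1,+}_{t,x}((\xi,\infty))$ and $\bar f^{2,+}_t(y,\zeta)=\nu^{2,+}_{t,y}((-\infty,\zeta])$ together with Fubini, an elementary case analysis in $\xi_1-\zeta_2$ gives
$$\int\!\!\int_{|\xi-\zeta|\leq 2\ep}\1_{\xi<\xi_1}\,\1_{\zeta\geq\zeta_2}\,\dd\xi\,\dd\zeta=\int_{-2\ep}^{2\ep}(\xi_1-\zeta_2-u)^+\,\dd u\leq 4\ep(\xi_1-\zeta_2)^++8\ep^2.$$
Integrating this bound against $\nu^{1,+}_{t,x}\otimes\nu^{2,+}_{t,y}$ and invoking $(\xi_1-\zeta_2)^+\leq|\xi_1|+|\zeta_2|$ along with the probability-measure normalization yields
$$\int\!\!\int_{|\xi-\zeta|\leq 2\ep} f^{1,+}_t(x,\xi)\,\bar f^{2,+}_t(y,\zeta)\,\dd\xi\,\dd\zeta\lesssim \ep\int|\xi|\,\nu^{1,+}_{t,x}(\dd\xi)+\ep\int|\zeta|\,\nu^{2,+}_{t,y}(\dd\zeta)+\ep^2.$$

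To close the argument I would then integrate the previous bound over the spatial variables $(x,y)$ satisfying $|x-y|\leq 2\ep$ and $|x+y|\leq 2R$. For the two kinetic-measure terms, the constraint $|x-y|\leq 2\ep$ restricts $y$ to a ball of radius $2\ep$ around $x$, contributing a factor $\lesssim\ep^N$; combined with the first-moment assumption \eqref{integrov} this yields contributions bounded by $\ep^{N+1}$ times the two velocity-supremum terms appearing in $\mathcal{M}(s,t,\ep,R)$. The residual $\ep^2$ piece is integrated over a set of volume $\lesssim R^N\ep^N$, producing the contribution $\ep^{N+2}R^N$. Multiplying by the overall prefactor $\ep^{-(N+1)}$ recovers precisely $\mathcal{M}(s,t,\ep,R)$, and taking the supremum over $t$ and $\Phi$ concludes. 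The main subtlety, and what I view as the crucial point of the proof, is the velocity-variable Fubini computation: the opposite monotonicities of $f^{1,+}_t(\cdot,\xi)$ (non-increasing) and $\bar f^{2,+}_t(\cdot,\zeta)$ (non-decreasing) are what localize the integrand near the diagonal $\xi_1\geq\zeta_2$ and provide the extra $\ep$ factor; without this cancellation, the trivial bound $|f^{1,+}|,|\bar f^{2,+}|\leq 1$ would only give $\mathcal{J}\lesssim R^{N+1}\ep^{N+1}$, leading to the spurious estimate $|F^\ep_t(\Phi)|\lesssim R^{N+1}$ rather than the desired $\ep R^N+\cdots$.
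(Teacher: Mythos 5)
Your proof is correct and captures the same key mechanism as the paper, but via a genuinely different route in the central computation. Where you differ is in the treatment of the velocity‑variable integral $\ep^{-1}\int\!\!\int_{|\xi-\zeta|\leq 2\ep}f^{1,+}_r(x,\xi)\,\bar f^{2,+}_r(y,\zeta)\,\dd\xi\,\dd\zeta$: the paper introduces the second‑order antiderivative
$\Upsilon^\ep(\xi,\zeta)=\int_\zeta^\infty\int_{-\infty}^\xi\ep^{-1}\1_{|\xi'-\zeta'|\leq 2\ep}\,\dd\xi'\,\dd\zeta'$
and performs two integrations by parts against the Young measures, then bounds $|\Upsilon^\ep|\lesssim \ep+|\xi|+|\zeta|$ using $|\Upsilon^\ep(0,0)|\lesssim\ep$ and $|\partial_\zeta\Upsilon^\ep|\leq 2$. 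You instead plug in the representations $f^{1,+}(x,\xi)=\nu^{1,+}_{t,x}((\xi,\infty))$, $\bar f^{2,+}(y,\zeta)=\nu^{2,+}_{t,y}((-\infty,\zeta])$, apply Fubini, and compute the inner double integral explicitly by the substitution $u=\xi-\zeta$, yielding $\int_{-2\ep}^{2\ep}(\xi_1-\zeta_2-u)^+\dd u\leq 4\ep(\xi_1-\zeta_2)^+ + 4\ep^2$. These are algebraically equivalent ($\Upsilon^\ep(\xi_1,\zeta_2)$ is precisely your inner integral after $\ep$‑rescaling), but your version is more self‑contained and makes the source of the extra $\ep$ factor — the opposite monotonicities of the two kinetic functions, which you rightly identify as the crucial cancellation — completely elementary; the paper's version is slightly more abstract and requires checking that the boundary terms in the double integration by parts vanish. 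Your handling of the Jacobian $\ep^{-(N+1)}$, the support condition under the blow‑up, and the final $(x,y)$‑integration over the slab $\{|x-y|\leq 2\ep,\,|x+y|\leq 2R\}$ likewise matches the paper's computation (done there with an explicit change to $\mathbf{x}_\pm$ coordinates) and gives the right powers $\ep R^N$ and $\ep^{N+1}$. Minor: your constant $8\ep^2$ should be $4\ep^2$, which is immaterial; and the quantity you need in the last step is the first moment $\int_{\R^N}\int_\R|\xi|\,\nu^{1,+}_{t,x}(\dd\xi)\,\dd x$ itself (appearing in $\mathcal{M}$), not merely its finiteness from~\eqref{integrov} — but you clearly use it correctly.
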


\rmk{\begin{remark}
Observe that our localization procedure becomes apparent here for the first time. Indeed, the bound (\ref{bou-m-ep}) still depends on the localization parameter $R$. This lack of uniformity do not poses  any problem since our procedure will later consist in sending $\varepsilon\to0$ first and then $R\to\infty$. 
\end{remark}}

\begin{proof}
Consider the function $\Upsilon^\ep:\R^2 \to [0,\infty[$ defined as
\[ \Upsilon^\ep (\xi, \zeta): =  \int_{\zeta}^{\infty}  \int_{-
   \infty}^{\xi} \ep^{-1} \mathbf{1}_{|\xi'-\zeta'|\leq 2\ep}\,\dd\xi'\dd\zeta' , 
   \]
whose main interest lies in the relation $(\partial_{\zeta} \partial_{\xi} \Upsilon^\ep)(\xi, \zeta)= \ep^{-1}\1_{|\xi-\zeta|\leq 2\ep}$. Let us derive some elementary properties of $\Upsilon^\ep$. First, we obviously have:
\begin{equation}\label{eq:deriv-ups-eps}
\partial_{\xi} \Upsilon^\ep (\xi, \zeta) =  \int_{\zeta}^{\infty} \varepsilon^{- 1} \mathbf{1}_{|\xi-\zeta'|\leq 2\ep} \,\dd\zeta',
\end{equation}
and in particular $\partial_{\xi} \Upsilon^\ep (\xi, +\infty) = 0 $. A simple change of variables also yields:
\begin{eqnarray*}
 \Upsilon^\ep (\xi, \zeta)
	&=&  \int_{{\zeta - \xi}}^{\infty}  \int_{-
   \infty}^0  \varepsilon^{- 1} \mathbf{1}_{|\xi'-\zeta'|\leq 2\ep}\,\dd\xi'\dd\zeta' = \ \Upsilon^\ep ({0,\zeta - \xi})  .
\end{eqnarray*}
Moreover, writing
 $$
 \Upsilon^\ep ( 0,\zeta) =  \varepsilon \int_{-\infty}^{{\zeta/\varepsilon}} \int_{-\infty}^{\zeta'} \mathbf{1}_{|\xi'|\leq 2}\,\dd\xi'\dd\zeta'  $$
it follows that
$$|\Upsilon^\ep ( 0,0)|=\varepsilon\int_{-\infty}^0 \int_{-\infty}^{\zeta'}\mathbf{1}_{|\xi'|\leq 2}\,\dd\xi'\dd\zeta'  \lesssim \varepsilon. $$
Finally, using the elementary bound
$$| \partial_\zeta \Upsilon^\ep(\xi,\zeta)| \leq 2,$$
which stems from \eqref{eq:deriv-ups-eps}, we obtain that
\begin{align*}
|\Upsilon^\ep (\xi, \zeta)|&= | \Upsilon^\ep ( 0, \zeta-\xi) | \lesssim \varepsilon + |\zeta-\xi|\lesssim \varepsilon + | \xi |+|\zeta|.
   \end{align*}
   
Recall that, since both $f^{1}$ and $f^{2}$ are kinetic solutions, we have $f^{1,+}_r (x,\xi)=\nu^{1,+}_{r,x} ((\xi,+\infty))$ and $\bar{f}^{2,+}_r (y,\zeta)=\nu^{2,+}_{r,y} ((-\infty, \zeta])$.
With the above properties of $\Upsilon^\ep$ in mind we thus obtain, for all $t\in [0,T]$ and $x,y\in \R^N$,
\begin{align}
\int_{\R^2}  f^{1,+}_r (x,\xi) \bar{f}^{2,+}_r (y,\zeta) \varepsilon^{-
   1}\mathrm{1}_{|\xi - \zeta|\leq 2\ep}\,\dd \xi \dd \zeta 
   & = -  \int_{\R^2} 
   f^{1,+}_r (x,\xi) \bar{f}^{2,+}_r (y,\zeta) (\partial_{\zeta} \partial_{\xi} \Upsilon^\ep)(\xi, \zeta)\,\dd \xi \dd \zeta \notag\\
&=  
   \int_{\R} \int_{\R} f^{1,+}_r (x;\xi)  (\partial_{\xi}\Upsilon^\ep) (\xi, \zeta) \,\dd \xi\, \nu^{2,+}_{r,y} (\dd  \zeta) \notag\\
& = \int_\R \int_\R  \Upsilon^\ep(\xi, \zeta)\,\nu^{1,+}_{r,x} (\dd  \xi)\,\nu^{2,+}_{r,y} (\dd \zeta)   \notag\\
&\lesssim \ep + \int_\R | \xi |\, \nu^{1,+}_{r,x} (\dd \xi)+ \int_\R | \zeta |\, \nu^{2,+}_{r,y} (\dd \zeta) . 
\label{eq:rel-product-f1-f2}
\end{align}
We are now ready to bound $F^{\ep}$ in $\mathcal{E}^{*}_{R,0}$, which is a $L^{1}$-type space. Namely, a simple change of variables yield:
\begin{align*}
\begin{aligned}
\|F^\ep_r\|_{\mathcal{E}^*_{R,0}}&\leq\int_{\R^{N+1}\times\R^{N+1}} F_r^\ep(\mathbf x,\mathbf y) \,\mathbf{1}_{|\mathbf x_-|\leq 1} \,\mathbf{1}_{| x_+|\leq R} \,\dd\mathbf x\dd\mathbf y\\
&\leq\int_{\R^{N+1}\times\R^{N+1}} F_r(\mathbf x,\mathbf y) \ep^{-N-1}\,\mathbf{1}_{|\mathbf x_-|\leq \ep} \,\mathbf{1}_{| x_+|\leq R} \,\dd\mathbf x\dd\mathbf y\\
&\leq\int_{\R^N\times\R^N}\ep^{-N}\,\mathbf{1}_{| x_-|\leq \ep}\,\mathbf{1}_{|x_+|\leq R}\int_{\R^2} F_t(\mathbf x,\mathbf y) \ep^{-1}\,\mathbf{1}_{|\xi_-|\leq \ep}\,\dd\xi\dd\zeta\dd x\dd y.
\end{aligned}
\end{align*}
Hence, thanks to relation \eqref{eq:rel-product-f1-f2}, we get
\begin{align*}
\begin{aligned}
\|F^\ep_r\|_{\mathcal{E}^*_{R,0}}
&\lesssim \int_{\R^N\times\R^N}\ep^{-N}\,\mathbf{1}_{| x_-|\leq \ep}\,\mathbf{1}_{|x_+|\leq R}\bigg(\ep + \int_\R | \xi |\, \nu^{1,+}_{r,x} (\dd \xi)+ \int_\R | \zeta |\, \nu^{2,+}_{r,y} (\dd \zeta)\bigg)\dd x\dd y\\
&\lesssim \ep R^N+\int_{\R^N\times\R^N}\ep^{-N}\,\mathbf{1}_{| x_-|\leq \ep}\bigg( \int_\R | \xi |\, \nu^{1,+}_{r,x} (\dd \xi)+ \int_\R | \zeta |\, \nu^{2,+}_{r,y} (\dd \zeta)\bigg)\dd x\dd y\\
&\lesssim \ep R^N+\int_{\R^N} \int_\R | \xi |\, \nu^{1,+}_{r,x} (\dd \xi)\dd x+ \int_{\R^N}\int_\R | \zeta |\, \nu^{2,+}_{r,y} (\dd \zeta)\dd y,
\end{aligned}
\end{align*}
and the estimate (\ref{bou-m-ep}) follows.
\end{proof}

\subsection{\rmk{Preliminary estimate for the drift term $Q^{\ep}$}}\label{subsec:q-ep}

Let us now proceed to an estimation of the drift term $Q^\ep$ in \eqref{eq:eps1}, where we recall that $Q$ is defined by \eqref{eq:def-Q-distrib}. \rmk{This estimation must fit the pattern of (\ref{cond-mu}) with respect to the smoothing $(\hat{J}^\eta)_{\eta \in (0,1)}$ introduced in Corollary \ref{cor:hat-j}.} To this aim, we set:
 $$
 q^{1}_{t} := \int_{[0,t]}  m^{1}_{\dd r}  \otimes \bar{f}^{2,-}_r 
, \qquad
 \sigma^{1}_{t} := \int_{[0,t]}  m^{1}_{\dd r} \otimes {\nu}^{2,-}_r 
 $$
and in parallel
 $$
 q^{2}_{t} := \int_{[0,t]}  f^{1,+}_{r}  \otimes m^2_{\dd r}
 , \qquad
 \sigma^{2}_{t} := \int_{[0,t]}  \nu^{1,+}_{r} \otimes m^{2}_{\dd r} \ .
 $$
With these notations, it holds true that
\begin{equation}\label{decompo-incr-q-1}
Q^1 =(\partial_\xi\otimes\mathbb{I}) q^1 =   2 \partial_\xi^+ q^1 - \sigma^1 \qquad \text{where} \qquad \partial_\xi^+:=\frac{1}{2}(\partial_\xi\otimes\mathbb{I} + \mathbb{I}\otimes \partial_\xi)  ,
\end{equation}
and in the same way $Q^2 =2 \partial_\xi^+ q^2 + \sigma^2$. We now bound the increments $q^{\ell}$ for $\ell=1,2$ uniformly.

\begin{lemma}\label{lem:3a}
For all $0\leq s\leq t\leq T$ and $\Phi \in \mathcal{E}_{R,k}$, $k=1,2,$ it holds that
\begin{equation*}
\sum_{\ell=1,2}| \delta q_{s t}^{\ell} (\partial_\xi^+T_\varepsilon  \Phi) | \lesssim  \omega_m(s, t)\|\partial_\xi^+\Phi\|_{\mathcal E_{R,0}}  ,
\end{equation*}
where $\ell=1,2,$ and the proportional constant does not depend on $\ep, R$  and the control $\omega_{m}$ is defined as follows
\begin{equation}\label{eq:def-omega-m}
\omega_m (s, t) := \| \bar{f}^2 \|_{L^{\infty}} m^1 ((s, t] \times \mathbb{R}^{N + 1}) + \| f^1\|_{L^{\infty}} m^2 ((s, t] \times \mathbb{R}^{N + 1}).
\end{equation}
\end{lemma}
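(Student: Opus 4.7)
The plan is to exploit the fact that $\partial_\xi^+$ commutes with the blow-up transformation $T_\ep$. First I would verify, by the chain rule, the identity
$$\partial_\xi^+ T_\ep \Phi = T_\ep (\partial_\xi^+ \Phi).$$
Indeed, on writing $T_\ep \Phi(\mathbf x,\mathbf y) = \ep^{-(N+1)} \Phi(\mathbf x_+ + \mathbf x_-/\ep,\mathbf x_+ - \mathbf x_-/\ep)$ and denoting the two arguments by $u,v$, the $\xi$-components satisfy $(\partial_\xi + \partial_\zeta) u^\xi = (\partial_\xi + \partial_\zeta) v^\xi = 1$ since the $\pm 1/\ep$ contributions cancel. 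This gives the commutation at the level of $\partial_\xi + \partial_\zeta = 2\partial_\xi^+$.

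Next, using the explicit form of $q^1$, I would write
$$\delta q^1_{st}(\partial_\xi^+ T_\ep \Phi) = \int_{(s,t]\times \R^{N+1}} \!\int_{\R^{N+1}} T_\ep (\partial_\xi^+\Phi)(\mathbf x,\mathbf y)\, \bar f^{2,-}_r(\mathbf y)\, d\mathbf y\, m^1(dr,d\mathbf x),$$
and, using $|\bar f^{2,-}| \leq \|\bar f^2\|_{L^\infty}$, reduce the task to showing the uniform estimate
$$\sup_{\mathbf x \in \R^{N+1}} \int_{\R^{N+1}} |T_\ep \Psi(\mathbf x,\mathbf y)|\, d\mathbf y \lesssim \|\Psi\|_{\mathcal E_{R,0}},$$
for $\Psi = \partial_\xi^+\Phi$, with a constant independent of $\ep$ and $R$. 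The key step is the change of variables $\mathbf y \mapsto \tilde{\mathbf x}_- := (\mathbf x - \mathbf y)/(2\ep)$ at $\mathbf x$ fixed, whose Jacobian $(2\ep)^{N+1}$ exactly compensates the $\ep^{-(N+1)}$ prefactor in $T_\ep$. The arguments of $\Psi$ then become $\mathbf x + (1-\ep)\tilde{\mathbf x}_-$ and $\mathbf x - (1+\ep)\tilde{\mathbf x}_-$, and a direct computation gives
$$\rho_R\bigl(\mathbf x + (1-\ep)\tilde{\mathbf x}_-,\, \mathbf x - (1+\ep)\tilde{\mathbf x}_-\bigr)^2 = \frac{|\mathbf x - \ep\tilde{\mathbf x}_-|^2}{R^2} + |\tilde{\mathbf x}_-|^2,$$
so the support condition $\rho_R \leq 1$ built into $\mathcal E_{R,0}$ forces $|\tilde{\mathbf x}_-| \leq 1$. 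The integration is therefore confined to a ball of fixed finite volume, uniformly in $\ep$, $R$, $\mathbf x$, and the bound follows.

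Plugging back, one obtains
$$|\delta q^1_{st}(\partial_\xi^+ T_\ep \Phi)| \lesssim \|\bar f^2\|_{L^\infty} \, m^1((s,t]\times \R^{N+1}) \, \|\partial_\xi^+ \Phi\|_{\mathcal E_{R,0}}.$$
The analogous argument for $q^2$ is obtained by swapping the roles of $\mathbf x$ and $\mathbf y$ (performing the change of variables in $\mathbf x$ at $\mathbf y$ fixed) and using $|f^{1,+}| \leq \|f^1\|_{L^\infty}$; it yields the matching bound with $m^2$. Summing the two produces the claimed inequality with control $\omega_m$ as defined in \eqref{eq:def-omega-m}. The only nontrivial technical point in this argument is the uniformity in both $\ep$ and $R$ of the auxiliary $L^1$-bound, and this rests entirely on the support constraint $|\mathbf x_-| \leq 1$ encoded in the scale $\mathcal E_{R,n}$, which precisely compensates the rescaling in the transverse direction.
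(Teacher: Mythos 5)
Your proof is correct and follows essentially the same route as the paper's. Both arguments hinge on the two same ingredients: the commutation $\partial_\xi^+ T_\ep = T_\ep\partial_\xi^+$, and a change of variables whose Jacobian cancels the $\ep^{-(N+1)}$ prefactor of $T_\ep$, with the support constraint $|\mathbf{x}_-|\leq 1$ built into $\mathcal{E}_{R,0}$ then confining the integral to a bounded set uniformly in $\ep$ and $R$. The only presentational difference is that the paper isolates the intermediate norm $\|\Psi\|_{L^1_- L^\infty_+} = \int_{\mathbf{x}_-}\sup_{\mathbf{x}_+}|\Psi(\mathbf{x}_++\mathbf{x}_-,\mathbf{x}_+-\mathbf{x}_-)|\,d\mathbf{x}_-$, proves it to be exactly $T_\ep$-invariant, and bounds it by $\|\Psi\|_{\mathcal{E}_{R,0}}$, whereas you instead bound the marginally tighter quantity $\sup_{\mathbf{x}}\int_{\mathbf{y}}|T_\ep\Psi(\mathbf{x},\mathbf{y})|\,d\mathbf{y}$ directly by substituting $\mathbf{y}\mapsto\tilde{\mathbf x}_-=(\mathbf{x}-\mathbf{y})/(2\ep)$; this is the same computation performed in a slightly different parametrization (sup over $\mathbf{x}$ rather than over $\mathbf{x}_+$) and buys nothing structurally new, but it is cleaner in not requiring the introduction of a named intermediate norm.
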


\begin{proof}
We shall bound $q^{1}(\partial_\xi^+T_\varepsilon  \Phi)$ only, the bound on $q^{2}(\partial_\xi^+T_\varepsilon  \Phi)$ being obtained in a similar way.

\noindent
\textit{Step 1: Bound on $q^{1}$.} 
Consider a test function $\Psi\in\mathcal{E}_{R,0}$, and let us first point out that
\begin{equation*}
|\delta q^1_{st}(\Psi)|  \leq  \int_{\mathbf{x},\mathbf{y}}    \delta q^1_{st}(d\mathbf{x},\mathbf{y}) |\Psi(\mathbf{x},\mathbf{y})|d\mathbf{y},
\end{equation*}
so that the change of variable $\mathbf{x}^{-}=\frac12(\mathbf{x}-\mathbf{y})$ and $\mathbf{x}$ unchanged yields
\begin{eqnarray}\label{eq:delta-q1-Psi-bnd1}
|\delta q^1_{st}(\Psi)| 
& = & 2^{N+1}\int_{\mathbf{x},\mathbf{x}_-}    \delta q^1_{st}(d\mathbf{x},\mathbf{x}-2 \mathbf{x}_-) |\Psi(\mathbf{x},\mathbf{x}-2 \mathbf{x}_-)|d\mathbf{x}_- \notag\\
&\leq &  2^{N+1}   \int_{\mathbf{x}_-} \int_{\mathbf{x}} \delta q^1_{st}(d\mathbf{x},\mathbf{x}-2 \mathbf{x}_-) \sup_{\mathbf{x}}|\Psi(\mathbf{x},\mathbf{x}-2 \mathbf{x}_-)|d\mathbf{x}_- \notag\\
&\leq &  2^{N+1}   \left[ \sup_{\mathbf{x}_-} \int_{\mathbf{x}} \delta q^1_{st}(d\mathbf{x},\mathbf{x}-2 \mathbf{x}_-) \right] \left[ \int_{\mathbf{x}_-} \sup_{\mathbf{x}}|\Psi(\mathbf{x},\mathbf{x}-2 \mathbf{x}_-)|d\mathbf{x}_-\right] \notag\\
&=&  2^{N+1}   \left[ \sup_{\mathbf{x}_-} \int_{\mathbf{x}}\delta q^1_{st}(d\mathbf{x},\mathbf{x}-2 \mathbf{x}_-) \right] \left[ \int_{\mathbf{x}_-} \sup_{\mathbf{x}_+}|\Psi(\mathbf{x}_+ + \mathbf{x}_-,\mathbf{x}_+- \mathbf{x}_-)|d\mathbf{x}_-\right] .
\end{eqnarray}
Furthermore, we have:
\begin{eqnarray*}
\sup_{\mathbf{x}_-} \int_{\mathbf{x}}\delta q^1_{st}(d\mathbf{x},\mathbf{x}-2 \mathbf{x}_-)& \leq & \int_{]s,t]} \sup_{\mathbf{x}_-} \int_{\mathbf{x}} m^1(dr,d\mathbf{x}) |\bar  f^2_r(\mathbf{x}-2 \mathbf{x}_-)|\\
& \leq & \|\bar f^2\|_{L^\infty} \int_{]s,t]\times \R^{N+1}}  m^1(dr,d\mathbf{x})\\
& \leq & \|\bar f^2\|_{L^\infty} m^1((s,t]\times \R^{N+1})  .
\end{eqnarray*}
Reporting this estimate into \eqref{eq:delta-q1-Psi-bnd1} we get:
\begin{equation}\label{estim-q-1}
|\delta q^1_{st}(\Psi)| \lesssim \|\bar f^2\|_{L^\infty} m^1((s,t]\times \R^{N+1}) \|\Psi\|_{L^1_- L^\infty_+},
\end{equation}
where we have introduced the intermediate norm
\begin{equation}\label{eq:def-norm-L1-Linf}
 \| \Psi \|_{L^1_- L^{\infty}_+} := \int_{\R^{N+1}} d \mathbf{x}_-
   \sup_{\mathbf{x}_+} | \Psi (\mathbf{x}_+ +\mathbf{x}_-,
   \mathbf{x}_+ -\mathbf{x}_-) | .
\end{equation}

\noindent
\textit{Step 2: Simple properties of the $L^1_- L^{\infty}_+$-norm.}
We still consider a test function $\Psi\in\mathcal{E}_{R,0}$. Observe that by the basic change of variables $\mathbf{x}_-=\ep^{-1}\mathbf{x}_-$, one has
\begin{eqnarray}\label{eq:ident-T-eps-Psi}
\|T_{\varepsilon} \Psi \|_{L^1_- L^{\infty}_+}& = &\varepsilon^{- N - 1}
   \int_{\R^{N+1}} d \mathbf{x}_- \sup_z | \Psi (z + \varepsilon^{- 1}
   \mathbf{x}_-, z - \varepsilon^{- 1} \mathbf{x}_-) | \label{eq:ident-t-ep-Psi}\\
& =& \int_{\R^{N+1}} d \mathbf{x}_- \sup_z | \Psi (z +\mathbf{x}_-, z
   -\mathbf{x}_-) | \ = \ \| \Psi \|_{L^1_- L^{\infty}_+} \ .  \notag
\end{eqnarray}
In addition, if $\Psi\in\mathcal{E}_{R,0}$, we can use the fact that the support of $\Psi$ is bounded in the  $\mathbf{x}_-$ variable (independently of $R$) in order to get:
\begin{equation*}
\| \Psi \|_{L^1_- L^{\infty}_+}\leq \|\Psi\|_{\mathcal{E}_{R,0}}.
\end{equation*}
   
\noindent
\textit{Step 3: Conclusion.}
As a last preliminary step, notice that
\begin{equation*}
\partial_\xi^+T_\ep=T_\ep \partial_\xi^+,
\end{equation*}
\rmk{where we recall that $\partial_\xi^+$ is defined by \eqref{decompo-incr-q-1}}.
This entails:
\begin{equation*}
|\delta q^1_{st}(\partial^+_\xi T_\varepsilon \Phi)|=|\delta q^1_{st}( T_\varepsilon\partial^+_\xi \Phi)|.
\end{equation*}
Then, applying successively \eqref{estim-q-1} and \eqref{eq:ident-T-eps-Psi} it follows that
\begin{align*}
|\delta q^1_{st}(\partial^+_\xi T_\varepsilon \Phi)|&=|\delta q^1_{st}( T_\varepsilon\partial^+_\xi \Phi)|\\
&\leq \|\bar f^2\|_{L^\infty} m^1((s,t]\times \R^{N+1}) \|\partial^+_\xi \Phi\|_{L^1_- L^\infty_+}\\
&\lesssim \|\bar f^2\|_{L^\infty} m^1((s,t]\times \R^{N+1})\|\partial^+_\xi\Phi\|_{\mathcal{E}_{R,0}},
\end{align*}
which is our claim.
\end{proof}

\rmk{We are now ready to establish our main estimate on $Q^\ep$.}

\rmk{\begin{proposition}\label{prop:3}
Let $Q$ be defined by \eqref{eq:def-Q-distrib}, $Q^{\ep}:= T^{\ast}_\varepsilon Q$ and let $(\hat{J}^\eta)_{\eta\in (0,1)}$ be the set of smoothing operators introduced in Corollary \ref{cor:hat-j}. Then for all $0\leq s\leq t\leq T$ and $\Phi \in \mathcal{E}_{R,k}$, $k=1,2,$ it holds that
\begin{eqnarray}
& & | \delta Q_{s t}^{\varepsilon} (\hat{J}^\eta \Phi) | 
\lesssim  
\omega_m(s, t)\|\Phi\|_{\mathcal E_{R,1}} + \eta^{k-3}\,
\omega_{\sigma, \varepsilon,R} (s, t) \| \Phi \|_{\mathcal{E}_{R,k}},  \label{bnd-q-ep-theta}\\
& & \delta Q_{s t}^{\varepsilon} (\Psi_R) + \omega_{\sigma, \varepsilon,R} (s, t)   \lesssim
\omega_m(s,t)\| \partial_\xi^+ \Psi_R\|_{\mathcal{E}_{R,0}} .
\label{appli-1-q-ep-theta} 
\end{eqnarray}
where the proportionality constants do not depend on $\ep$, $\eta$ and $R$, and $\Psi_R$ is the function introduced in Proposition \ref{prop:1}. In \eqref{bnd-q-ep-theta} and \eqref{appli-1-q-ep-theta}, we also have  $\omega_{m}$ given by \eqref{eq:def-omega-m} and the control $\omega_{\sigma, \varepsilon,R}$ is defined as 
\begin{equation}\label{eq:def-om-sigma}
\omega_{\sigma, \varepsilon,R}(s,t):=\delta\sigma^1_{st}(T^\ep \Psi_R)+\delta\sigma^2_{st}(T^\ep \Psi_R) \ .
\end{equation}
\end{proposition}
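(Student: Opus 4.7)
The plan is to start from the algebraic decomposition $Q = 2\partial_\xi^+(q^1 - q^2) - \sigma^1 - \sigma^2$, obtained by combining~\eqref{decompo-incr-q-1} with its analogue $Q^2 = 2\partial_\xi^+ q^2 + \sigma^2$ stated just below it. Since $\partial_\xi^+$ commutes with the blow-up $T_\ep$ (as already noted in the proof of Lemma~\ref{lem:3a}), pairing the increment of this identity with $T_\ep\Phi$ yields the master formula
$$\delta Q^\ep_{st}(\Phi) = -2\,\delta(q^1-q^2)_{st}\bigl(T_\ep \partial_\xi^+ \Phi\bigr) - \delta\sigma^1_{st}(T_\ep\Phi) - \delta\sigma^2_{st}(T_\ep\Phi),$$
from which both bounds will be derived.

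For~\eqref{bnd-q-ep-theta}, I would apply this identity with $\Phi$ replaced by $\hat J^\eta \Phi$. The $q$-term is then directly controlled by Lemma~\ref{lem:3a} together with $\|\partial_\xi^+ \hat J^\eta\Phi\|_{\mathcal E_{R,0}} \lesssim \|\hat J^\eta\Phi\|_{\mathcal E_{R,1}} \lesssim \|\Phi\|_{\mathcal E_{R,1}}$, the last inequality being the uniform-in-$\eta,R$ mapping property of $\hat J^\eta$ from Corollary~\ref{cor:hat-j}; this delivers the first summand $\omega_m(s,t)\|\Phi\|_{\mathcal E_{R,1}}$. The $\sigma$-terms rely on a key observation: the measures $\delta\sigma^\ell_{st}$ are nonnegative, since each $\sigma^\ell$ is the time-integrated tensor product of the nonnegative measure $m^\ell$ with a probability-valued Young measure. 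Transporting the pointwise estimate~\eqref{3} of Proposition~\ref{prop:1} through the pointwise change of variables $T_\ep$ gives
$$|T_\ep \hat J^\eta\Phi(\mathbf{x},\mathbf{y})| \lesssim \eta^{k-3}\|\Phi\|_{\mathcal E_{R,k}}\,T_\ep\Psi_R(\mathbf{x},\mathbf{y}),$$
so integration against the nonnegative measure $\delta\sigma^\ell_{st}$ produces $|\delta\sigma^\ell_{st}(T_\ep \hat J^\eta\Phi)| \lesssim \eta^{k-3}\|\Phi\|_{\mathcal E_{R,k}}\,\delta\sigma^\ell_{st}(T_\ep\Psi_R)$; summing over $\ell = 1,2$ and invoking~\eqref{eq:def-om-sigma} yields the second summand $\eta^{k-3}\omega_{\sigma,\ep,R}(s,t)\|\Phi\|_{\mathcal E_{R,k}}$.

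The bound~\eqref{appli-1-q-ep-theta} then follows by applying the master identity directly to $\Psi_R$, which belongs to $\mathcal{E}_{R,3}$ by Proposition~\ref{prop:1}: the definition~\eqref{eq:def-om-sigma} of $\omega_{\sigma,\ep,R}$ makes the two $\sigma$-pieces cancel the $+\omega_{\sigma,\ep,R}$ summand on the left of~\eqref{appli-1-q-ep-theta} exactly, leaving $-2\,\delta(q^1-q^2)_{st}(T_\ep\partial_\xi^+\Psi_R)$, which Lemma~\ref{lem:3a} bounds by $\omega_m(s,t)\|\partial_\xi^+\Psi_R\|_{\mathcal E_{R,0}}$. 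The main delicacy I anticipate lies in the treatment of the $\sigma$-contributions of~\eqref{bnd-q-ep-theta}: since these enjoy no smoothing effect, transferring~\eqref{3} under $T_\ep$ and genuinely exploiting the positivity of $\delta\sigma^\ell_{st}$ is essential, as replacing this step by a naive total-variation bound would destroy the $\Psi_R$-localized structure and prevent the resulting estimate from matching the control framework needed downstream to invoke Theorem~\ref{theo:apriori}.
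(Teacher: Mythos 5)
Your proof is correct and follows essentially the same route as the paper's: decompose $Q=2\partial_\xi^+(q^1-q^2)-\sigma^1-\sigma^2$, control the $q$-parts via Lemma~\ref{lem:3a}, and control the $\sigma$-parts by transporting the pointwise bound~\eqref{3} through $T_\ep$ and exploiting positivity of $\delta\sigma^\ell_{st}$. The only (cosmetic) difference is that you package both pieces of $Q$ into one ``master formula'' with the distributional sign kept explicit, whereas the paper does $Q^1$ in detail and appeals to symmetry for $Q^2$ while silently dropping the factor $2$ and a sign — immaterial since everything passes to absolute values before the final $\lesssim$.
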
}

\rmk{\begin{remark}
Although it seems purely technical, inequality \eqref{appli-1-q-ep-theta} sets the stage for our contraction argument yielding uniqueness. Namely, the proper control we need for the measure term of our equation will stems from the fact that the control $\omega_{\sigma, \varepsilon,R}$  appears in a "good" form in the l.h.s. of \eqref{appli-1-q-ep-theta}. This damping effect is reminiscent of \eqref{estim-toy} for the heat equation model.
\end{remark}}

\rmk{\begin{remark}
Observe that  the bound (\ref{bnd-q-ep-theta}) (which will serve us in the forthcoming application of Theorem \ref{theo:apriori}) still depends on both parameters $\ep$ and $R$. At this point we are not systematically looking for uniformity but only for bounds that we will be able to control afterwards, via the Gronwall-type arguments of Section \ref{subsec:diago}. The situation here can somehow be compared with our use of the (non-uniform) estimate (\ref{eq:estim-mu-heat}) in the example treated in Section \ref{subsec:first-application}.   
\end{remark}}

\begin{proof}
Recall that $Q$ is written as $Q^{1}-Q^{2}$ in \eqref{eq:def-Q-distrib}. We focus here on the estimate for $Q^{1}$. Furthermore, owing to \eqref{decompo-incr-q-1} we have 
\begin{equation*}
\delta Q^{1,\ep}_{st}(\rmk{\hat{J}^\eta}\Phi)
=
T_{\ep}^{*} \delta Q^{1}_{st}(\rmk{\hat{J}^\eta}\Phi)
=
\delta Q^{1}_{st}(T_{\ep}\rmk{\hat{J}^\eta}\Phi)
=
2 \delta Q_{st}^{11,\ep} - \delta Q_{st}^{12,\ep},
\end{equation*}
where
\begin{equation*}
\delta Q_{st}^{11,\ep}
=
\partial_{\xi}^{+}\delta q_{st}^{1}(T_{\ep}\rmk{\hat{J}^\eta}\Phi),
\quad\text{and}\quad
\delta Q_{st}^{12,\ep}
=
\delta \sigma_{st}^{1}(T_{\ep}\rmk{\hat{J}^\eta}\Phi).
\end{equation*}
Now thanks to Lemma~\ref{lem:3a}, we have that
\begin{equation*}
|\delta Q_{st}^{11,\ep}|
=
|\delta q^1_{st}(\partial^+_\xi T_\varepsilon \rmk{\hat{J}^\eta}\Phi)|
\lesssim \|\bar f^2\|_{L^\infty} m^1((s,t]\times \R^{N+1})
\|\partial^+_\xi  \rmk{\hat{J}^\eta}\Phi\|_{\mathcal{E}_{R,0}}
\end{equation*}
Moreover, invoking the fact that $J^{\eta}$ is a bounded operator in $\mathcal{E}_{1}$ plus inequality~\eqref{1}, we get:
\begin{equation*}
\|\partial^+_\xi  \rmk{\hat{J}^\eta}\Phi\|_{\mathcal{E}_{R,0}}
\lesssim
\|  \rmk{\hat{J}^\eta}\Phi\|_{\mathcal{E}_{R,1}}
\lesssim
\|  \Phi\|_{\mathcal{E}_{R,1}},
\end{equation*}
which entails the following relation:
\begin{equation*}
|\delta Q_{st}^{11,\ep}|
\lesssim \|\bar f^2\|_{L^\infty} m^1((s,t]\times \R^{N+1})\|\Phi\|_{\mathcal{E}_{R,1}}.
\end{equation*}
As far as the term $\delta Q_{st}^{12,\ep}$ is concerned, we make use of \eqref{3} to deduce
\begin{equation*}
\eta^{3-k} |\delta Q_{st}^{12,\ep}|
=
 \eta^{3-k}| \delta \sigma^1_{s t} (T_{\varepsilon} \rmk{\hat{J}^\eta}\Phi) | \leqslant 
  \eta^{3-k}\delta \sigma^1_{s t} (T_{\varepsilon}  |\rmk{\hat{J}^\eta} \Phi |) \leqslant \| \Phi
   \|_{\mathcal{E}_{R,k}}\, \omega_{\sigma, \varepsilon,R}(s,t)  . 
	\end{equation*}
Putting together our bound on $\delta Q_{st}^{11,\ep}$ and $\delta Q_{st}^{12,\ep}$, we thus get:
$$
 | \delta Q_{s t}^1 (T_{\varepsilon}\rmk{\hat{J}^\eta}\Phi) | \lesssim  
	 \omega_m(s, t)\|\Phi\|_{\mathcal E_{R,1}} + \eta^{k-3}\,\omega_{\sigma, \varepsilon,R} (s, t) \| \Phi \|_{\mathcal{E}_{R,k}}  ,
$$
and along the same lines, we can prove that
$$| \delta Q_{s t}^2 (T_{\varepsilon} \rmk{\hat{J}^\eta}\Phi) | \lesssim \omega_m(s, t)\|\Phi\|_{\mathcal E_{R,1}} + \eta^{k-3}\,\omega_{\sigma, \varepsilon,R} (s, t) \| \Phi \|_{\mathcal{E}_{R,k}}  ,$$
which achieves the proof of our assertion \eqref{bnd-q-ep-theta}.

The second claim \eqref{appli-1-q-ep-theta} is obtained as follows: we start from relation (\ref{decompo-incr-q-1}), which yields:
\begin{eqnarray}\label{a0}
\delta Q_{s t}^{\varepsilon} (\Psi_R)  = \delta Q_{s t} (T_{\varepsilon}
   \Psi_R) &=& 
   -  \delta\sigma^1_{s t} (T_{\varepsilon} \Psi_R) - 
  \delta \sigma^2_{s t} (T_{\varepsilon}\Psi_R)+  \delta q^1_{s t} (T_{\varepsilon} \partial_\xi^+ \Psi_R) +  \delta q^2_{s t} (T_{\varepsilon} \partial_\xi^+ \Psi_R) \notag \\
  &\leq&
- \omega_{\sigma, \varepsilon,R} (s, t) 
+  \delta q^1_{s t} (T_{\varepsilon} \partial_\xi^+ \Psi_R) 
+  \delta q^2_{s t} (T_{\varepsilon} \partial_\xi^+ \Psi_R)
\end{eqnarray}
We can now proceed as for \eqref{bnd-q-ep-theta} in order to bound $\delta q^1_{s t} (T_{\varepsilon} \partial_\xi^+ \Psi_R)$ and $\delta q^2_{s t} (T_{\varepsilon} \partial_\xi^+ \Psi_R)$ above, and this immediately implies \eqref{appli-1-q-ep-theta}.
\end{proof}

\subsection{Passage to the diagonal}\label{subsec:diago}
\rmk{Thanks to the results of Sections \ref{subsec:smoothing}-\ref{subsec:q-ep}, we are now in a position to efficiently apply Theorem \ref{theo:apriori} to the transformed equation (\ref{eq:eps1}). To be more specific, we study this equation on the scale $(\mathcal{E}_{R,n})_{0\leq n\leq 3}$ defined in (\ref{recall-e-r-n}) (for fixed $R\geq 1$) and consider the smoothing $(\hat{J}^\eta)_{\eta\in(0,1)}$ given by Corollary \ref{cor:hat-j}. At this point, let us also recall that the driver $\mathbf{A}$ governing the original equation is known to be renormalizable with respect to the scale $(\mathcal{E}_{R,n})$: this was the content of Proposition \ref{prop:renorm}, which provides us with the two bounds (\ref{prop:renorm-1})-(\ref{prop:renorm-2}) (uniform in both $\ep$ and $R$) for the tensorized driver $\mathbf{\Gamma}^\ep$.}

\

\rmk{By injecting these considerations, together with the results of Proposition \ref{prop:2} and Proposition \ref{prop:3}, into the statement of Theorem \ref{theo:apriori}, we immediately obtain the following important assertion about the remainder $F^{\natural,\ep}$ in equation \eqref{eq:eps1}: there exists a constant $L>0$ such that if $\omega_Z(I)\leq L$, one has, for all $s< t\in I$,
\begin{equation}\label{estim-f-natural-ep}
\begin{split}
 \|F^{\natural,\ep}_{st}\|_{\mathcal{E}^*_{R,3}}&\lesssim \omega_{\ast,\ep,R}(s,t)^{\frac{3}{q}}\\
 &\quad\ := \mathcal{M}(s,t,\ep,R)\,\omega_Z(s,t)^{\frac{3}{p}-2\kappa}
+\omega_m(s,t)\omega_Z(s,t)^\frac{1}{p}+\omega_{\sigma,\ep,R}(s,t) \omega_Z(s,t)^{\kappa }\ ,
\end{split}
\end{equation}
where the proportional constant is independent of $\ep$ and $R$, the quantities $\mathcal{M},\omega_Z,\omega_{m},\omega_{\sigma,\ep,R}$ are respectively defined by (\ref{defi-mathcal-m}), Hypothesis \ref{hyp:z}, \eqref{eq:def-omega-m} and \eqref{eq:def-om-sigma}, and we have fixed (once and for all) the parameters $q,\ka$ such that
\begin{equation*}
q\in\left[\frac{9p}{2p+3},3\right)\quad , \quad \kappa\in \left[\frac{3}{q}-1,\frac12 \left(\frac{3}{p}-\frac{3}{q}\right)\right] \ .
\end{equation*}}

As a first consequence of \rmk{estimate \eqref{estim-f-natural-ep}}, we can derive the following bound on the limit of $\omega_{\sigma,\ep,R}(s,t)$ as $\ep \to 0$ and $R\to\infty$.

\begin{lemma}\label{lem:conv-mu-theta-ep}
Let $\omega_{\sigma,\ep,R}$ be the control defined by \eqref{eq:def-om-sigma}.
There exists a finite measure $\mu$ on $[0,T]$ such that, for all $0\leq s\leq t\leq T$,
\begin{equation}\label{limsup-omega-ep}
\limsup_{R\to\infty}\limsup_{\ep \to 0}\, \omega_{\sigma,\ep,R}(s,t) \leq \mu([s,t]) .
\end{equation}
\end{lemma}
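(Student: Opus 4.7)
The plan is to view $\omega_{\sigma,\ep,R}(\cdot,\cdot)$ as the interval-measure of a uniformly bounded family of positive Radon measures on $[0,T]$, and then to pass to a weak-$*$ limit first as $\ep\to 0$ (yielding a measure $\mu_R$ for each $R$) and then as $R\to\infty$ (yielding the desired $\mu$). From the explicit form of $\sigma^1,\sigma^2$, indeed $\omega_{\sigma,\ep,R}(s,t)=\mu^{\ep,R}((s,t])$ where $\mu^{\ep,R}$ is the positive Borel measure on $[0,T]$ given by
$$\mu^{\ep,R}(dr):=\int_{\R^{N+1}}m^1(dr,d\mathbf{x})\int_{\R^{N+1}} T_\ep\Psi_R(\mathbf{x},\mathbf{y})\,dy\,d\nu^{2,-}_{r,y}(\zeta)+(\text{analogous term from }\sigma^2).$$

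The first substantial step will be to bound $\mu^{\ep,R}([0,T])$ uniformly in the limit $\ep\to 0$. Starting from \eqref{appli-1-q-ep-theta} and expanding $-\delta Q^\ep_{st}(\Psi_R)$ via the tensorized equation \eqref{eq:eps1}, one bounds the pieces $\delta F^\ep_{st}(\Psi_R)$, $F^\ep_s(\Gamma^{k,*}_{\ep,st}\Psi_R)$, $F^{\natural,\ep}_{st}(\Psi_R)$ using Proposition \ref{prop:2}, the uniform renormalizability bounds \eqref{prop:renorm-1}--\eqref{prop:renorm-2}, and the remainder estimate \eqref{estim-f-natural-ep}. On any subinterval $(s,t)$ with $\omega_Z(s,t)\leq L_0$ small enough to absorb the self-referential term $\omega_{\sigma,\ep,R}(s,t)\,\omega_Z(s,t)^\kappa$ into the left-hand side, this will yield $\omega_{\sigma,\ep,R}(s,t)\lesssim \omega_m(s,t)+\mathcal{M}(s,t,\ep,R)$. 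Since $\mathcal{M}\leq \ep R^N+M_0$ with $M_0$ uniform in $\ep,R$, summation over a finite partition of $[0,T]$ whose cardinality depends only on $\omega_Z(0,T)$ and $L_0$ (hence not on $\ep,R$) gives $\mu^{\ep,R}([0,T])\leq C(1+\ep R^N)$, so $\limsup_{\ep\to 0}\mu^{\ep,R}([0,T])\leq C$ uniformly in $R\geq 1$.

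Granted this bound, Banach--Alaoglu provides, for each fixed $R$, a sequence $\ep_k=\ep_k(R)\to 0$ and a finite positive measure $\mu_R$ on $[0,T]$ with $\mu^{\ep_k,R}\rightharpoonup^*\mu_R$. A diagonal extraction along a countable dense set of pairs $(s,t)\in[0,T]^2$ ensures simultaneously $\omega_{\sigma,\ep_k,R}(s,t)\to\limsup_{\ep\to 0}\omega_{\sigma,\ep,R}(s,t)$ on this dense set; the Portmanteau theorem applied to the closed set $[s,t]$ then gives $\limsup_{\ep\to 0}\omega_{\sigma,\ep,R}(s,t)\leq \mu_R([s,t])$ on the dense set, and the monotone convergence $\mu_R([s_n,t_n])\searrow \mu_R([s,t])$ along shrinking intervals $[s_n,t_n]\searrow [s,t]$ extends the inequality to all $0\leq s\leq t\leq T$. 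Since $\{\mu_R\}_R$ is again uniformly bounded by $C$, a completely analogous extraction as $R\to\infty$ produces a finite positive measure $\mu$ on $[0,T]$ with $\limsup_{R\to\infty}\mu_R([s,t])\leq \mu([s,t])$, and composing the two inequalities yields the claim.

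The main technical obstacle lies in the absorption argument of the first step: one must verify that once the self-referential $\omega_{\sigma,\ep,R}(s,t)\,\omega_Z(s,t)^\kappa$ term has been absorbed on small time-scales, the resulting total mass bound is genuinely uniform in $R$ after letting $\ep\to 0$, which in turn requires careful monitoring of the constants arising from Proposition \ref{prop:renorm}, Proposition \ref{prop:2}, and the apriori estimate \eqref{estim-f-natural-ep}.
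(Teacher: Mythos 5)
Your proof follows essentially the same path as the paper's: write $\omega_{\sigma,\ep,R}(s,t)=\mu^{\ep,R}((s,t])$ for an explicit family of positive measures, prove a uniform-in-$\ep$ (and, after $\ep\to 0$, in $R$) total-mass bound on $[0,T]$ by feeding \eqref{appli-1-q-ep-theta}, Proposition~\ref{prop:2}, Proposition~\ref{prop:renorm} and \eqref{estim-f-natural-ep} into \eqref{eq:eps1} tested against $\Psi_R$, absorb the $\omega_{\sigma,\ep,R}(s,t)\,\omega_Z(s,t)^\kappa$ term on small time scales and then telescope, and finally extract weak-$*$ limits $\mu_R$ and then $\mu$ via Banach--Alaoglu and conclude with Portmanteau. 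That is exactly the scheme of the paper's proof, including the observation that the absorption threshold and the proportionality constants do not depend on $\ep,R$.

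There is, however, one step of your argument that is simply false as stated: the claim that a diagonal extraction along a countable dense set of pairs $(s,t)$ produces a single sequence $\ep_k\to 0$ along which $\omega_{\sigma,\ep_k,R}(s,t)$ converges \emph{to the full $\limsup_{\ep\to 0}$} simultaneously for every pair in the dense set. This cannot be done in general (take two bounded sequences whose limsup-achieving subsequences are disjoint); no diagonalization can force a common subsequence to attain the $\limsup$ of infinitely many different sequences at once. Fortunately you do not need this claim at all. Once Banach--Alaoglu gives a sequence $\ep_k\to 0$ with $\mu^{\ep_k,R}\rightharpoonup^*\mu_R$, the one-sided Portmanteau inequality for the closed set $[s,t]$ gives directly $\limsup_k\mu^{\ep_k,R}([s,t])\le\mu_R([s,t])$ for \emph{every} $0\le s\le t\le T$, without any density or continuity hypothesis, and since $(s,t]\subset[s,t]$ this already yields $\limsup_k\omega_{\sigma,\ep_k,R}(s,t)\le\mu_R([s,t])$ for all $s,t$. (The paper instead singles out the at-most-countable sets of discontinuity of $t\mapsto\mu_R((0,t])$ and approximates $[s,t]$ from outside by continuity points; your appeal to continuity from above of $\mu_R$ along shrinking closed intervals is the same approximation, but as just noted it is actually unnecessary if one uses Portmanteau for closed sets.) The price, here as in the paper, is that the $\limsup_{\ep\to 0}$ in the final statement must be read as a $\limsup$ along the extracted subsequence $\ep_k$ -- this is how the lemma is subsequently used in the proof of Proposition~\ref{prop:compare}, where the limits in \eqref{con}--\eqref{con-2} hold along the full net, so one is free to pass to the subsequence for the remainder term. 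If you make that interpretation explicit and drop the incorrect diagonal-extraction sentence, your proof is correct and coincides with the paper's.
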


\begin{proof}
Consider the sequence of measures $(\mu_{R}^\ep)_{\ep>0}$ on $[0,T]$ defined for every Borel set $B \subset [0,T]$ as
\begin{equation}\label{eq:def-mu-ep-R}
\mu^\ep_{R}(B):=\bigg( \int_B m^1_{dr} \otimes \nu^{2,-}_r \bigg)\big(T_\ep \Psi_R\big)+\bigg( \int_B \nu^{1,+}_r \otimes m^2_{dr} \bigg)\big( T_\ep \Psi_R\big) ,
\end{equation}
so that $\omega_{\sigma,\ep,R}(s,t)=\mu^\ep_R((s,t])$.
By applying equation \eqref{eq:eps1} to the test function $\Psi_R$ and using~(\ref{appli-1-q-ep-theta}), we get that for every $s<t\in [0,T]$,
\begin{equation*}
\delta F^{\varepsilon}_{s t} (\Psi_R) \leq F^{\varepsilon}_s\big((\Gamma_{\varepsilon,st}^{1,*} + \Gamma_{\varepsilon,st}^{2,*})(\Psi_R)\big) -  \omega_{\sigma, \varepsilon,R} (s, t) +F^{\natural, \varepsilon}_{s t} (\Psi_R) +\omega_m(s,t)\| \partial_\xi^+ \Psi_R\|.
\end{equation*}
and so
\begin{equation*}
\omega_{\sigma, \varepsilon,R} (s, t) \leq 
F^{\varepsilon}_s
(|(1+\Gamma_{\varepsilon,st}^{1,*} + \Gamma_{\varepsilon,st}^{2,*})\Psi_R|) 
+|F^{\natural, \varepsilon}_{s t} (\Psi_R) | +\omega_m(s,t)\| \partial_\xi^+ \Psi_R\|.
\end{equation*}
Therefore, due to \rmk{Proposition \ref{prop:2}, estimate \eqref{estim-f-natural-ep}} and assumption \eqref{integrov}, we can conclude that for every interval $I\subset[0,T]$ satisfying $\omega_Z(I)\leq L$, one has
\begin{align*}
\omega_{\sigma, \varepsilon,R} (I) \lesssim \ep \rmk{R^N}+1 +\omega_m(I)(1+\omega_Z(I)^\frac{1}{p})+\omega_{\sigma,\ep,R}(I) \omega_Z(I)^{\kappa },
\end{align*}
for some proportional constant independent of $\ep, \,R$. As a consequence, there exists $0<L'\leq L$ such that for every interval $I\subset [0,T]$ satisfying $\omega_Z(I)\leq L'$, it holds
$$
\omega_{\sigma, \varepsilon,R} (I) \lesssim \ep \rmk{R^N}+1 +\omega_m(I).
$$
By uniformity of both $L'$ and the proportional constant, the latter bound immediately yields
\begin{equation}\label{lim}
\omega_{\sigma, \varepsilon,R} (0,T) \lesssim \ep \rmk{R^N}+1 +\omega_m(0,T).
\end{equation}
Thus, the sequence $(\mu^\ep_{R})_{\ep>0}$ defined by \eqref{eq:def-mu-ep-R} is bounded in total variation on $[0,T]$ and accordingly, by Banach-Alaoglu theorem, there exists a subsequence, still denoted by $(\mu^\ep_{R})_{\ep>0}$, as well as a finite measure $\mu_{R}$ on $[0,T]$ such that for every $\vp \in C([0,T])$, one has
$$\mu_{R}^\ep(\vp) \to \mu_{R}(\vp)\qquad  \text{as $\ep\to0$.}$$
Moreover, as a straightforward consequence of \eqref{lim}, we get
$$\mu_{R}([0,T]) \lesssim 1+\omega_m(0,T). $$
Therefore $(\mu_R)_{R\in\N}$ is bounded in total variation and there exists a finite measure $\mu$ on $[0,T]$ satisfying
$$\mu([0,T]) \lesssim 1+\omega_m(0,T), $$
such that, along a subsequence,
$$\mu_R(\vp)\to\mu(\vp),\qquad\forall\,\vp\in C([0,T]),\qquad R\to\infty.$$

Finally, due to the properties of $BV$-functions, for every $R\in\N$, there exists an at most countable set $\mathcal{D}_R$ such that the function $t\mapsto \mu_{R}(]0,t])$ is continuous on $[0,T]\setminus\mathcal{D}_R$. Furthermore, by Portmanteau theorem,  one has
$$\mu^\ep_{R}(]0,t]) \to \mu_{R}(]0,t])\qquad\forall\,t\in [0,T]\setminus\mathcal D_R\qquad\ep\to 0.$$
Similarly, there exists a countable set $\mathcal D$ such that
$$\mu_R(]0,t])\to\mu(]0,t])\qquad\forall\,t\in [0,T]\setminus\mathcal D\qquad R\to 0.$$
Fix $s<t\in [0,T]$. Since a countable union of countable sets is countable, we may consider a sequence $(s_k)$, resp. $(t_k)$, of points outside of $\cup_R \mathcal{D}_R\cup \mathcal{D}$ that increase, resp. decrease, to $s$, resp. $t$, as $k$ tends to infinity. Then
\begin{align*}
\limsup_{\ep \to 0}\, \omega_{\sigma,\ep,R}(s,t)=\limsup_{\ep \to 0}\, \mu_{R}^\ep(]s,t])\leq \limsup_{\ep \to 0}\, \mu_{R}^\ep(]s_k,t_k])=\mu_{R}(]s_k,t_k])  
\end{align*}
and
\begin{align*}
\limsup_{R\to\infty}\limsup_{\ep \to 0}\, \omega_{\sigma,\ep,R}(s,t)\leq\limsup_{R\to\infty}\mu_{R}(]s_k,t_k]) =\mu(]s_k,t_k]) .
\end{align*}
By letting $k$ tend to infinity, we get (\ref{limsup-omega-ep}), which achieves the proof of the lemma.
\end{proof}

We are now ready to prove our main intermediate result towards uniqueness.

\begin{proposition}\label{prop:convergence}
Consider $\psi\in C^\infty_c(\R^{N+1})$ such that
$$\psi\geq0,\qquad\supp\,\psi\subset B_{2/\sqrt{2}},\qquad\int_{\R^{N+1}}\psi(\mathbf x)\,\dd\mathbf x =1.
$$
Let also $ \{\vp_R; R>0\} \subset C^\infty_c(\R^{N+1})$ be a family of smooth functions such that
$$\vp_R \geq0,\qquad\supp\,\vp_R \subset B_{R/\sqrt{2}}, \qquad \sup_R \|\vp_R \|_{W^{3,\infty}}\lesssim 1.  $$
We define 
\begin{equation}\label{eq:def-Phi-R}
\Phi_{R}(\mathbf{x},\mathbf{y})= \vp_{R}(\mathbf{x}_{+}) \psi(2\mathbf{x}_{-}).
\end{equation}
Then
for every $0\leq s\leq t\leq T$, it holds true that, \rmk{as $\varepsilon \to 0$,}
\begin{equation}\label{con}
 F^{\varepsilon}_{t} (\Phi_R) 
   \rightarrow  h_{t} (\varphi_R),
   \end{equation}
\begin{equation}\label{con-2}
(\Gamma^1_{ \varepsilon,st} + \Gamma^2_{
   \varepsilon,st})_{} F_{s}^{\varepsilon} (\Phi_R)  \rightarrow (A^1_{st} + A^2_{st})_{} h_s
   (\varphi_R), 
   \end{equation}
where $h_t :=  f^{1,+}_t \bar f^{2,+}_t$.
\end{proposition}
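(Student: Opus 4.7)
The plan is to compute both limits by substituting the explicit tensor structure $F = f^{1,+} \otimes \bar f^{2,+}$ and performing a change of variables that turns $T_\varepsilon \Phi_R$ into a standard mollifier in the transverse direction. The two claims will then be of DiPerna--Lions commutator-lemma type, the essential point being the cancellation of the apparently singular contributions thanks to the divergence-free condition \eqref{divfree} on the vector fields $V^i$.

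For \eqref{con}, I would first write $F^\varepsilon_t(\Phi_R) = F_t(T_\varepsilon \Phi_R)$ and apply the change of variables $\mathbf{u} = (\mathbf{x}+\mathbf{y})/2$, $\mathbf{w} = (\mathbf{x}-\mathbf{y})/\varepsilon$ to obtain the explicit representation
\begin{equation*}
F^\varepsilon_t(\Phi_R) = \int_{\R^{N+1}\times \R^{N+1}} f^{1,+}_t(\mathbf{u}+\tfrac{\varepsilon}{2}\mathbf{w})\, \bar f^{2,+}_t(\mathbf{u}-\tfrac{\varepsilon}{2}\mathbf{w})\,\varphi_R(\mathbf{u})\,\psi(\mathbf{w})\,d\mathbf{u}\,d\mathbf{w}.
\end{equation*}
The limit will then follow from the $L^1_{\mathrm{loc}}$-continuity of translations (using that $\varphi_R$ has compact support and $f^{1,+}_t, \bar f^{2,+}_t$ take values in $[0,1]$, hence are locally integrable), which yields convergence of the inner integral in $\mathbf{u}$ to $h_t(\varphi_R)$ for each fixed $\mathbf{w}\in\supp\psi$; dominated convergence in $\mathbf{w}$ with majorant $\psi(\mathbf{w})\|\varphi_R\|_{L^1}$ then concludes.

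For \eqref{con-2}, I would exploit the explicit formula $\Gamma^{1,*}_{\varepsilon,st} = Z^{1,i}_{st}\Gamma^{1,i,*}_{V,\varepsilon}$ with $\Gamma^{1,i,*}_{V,\varepsilon} = -V^{+}_\varepsilon \cdot \nabla^+ - \varepsilon^{-1}V^-_\varepsilon \cdot \nabla^- - D^+_\varepsilon$ derived in the proof of Proposition \ref{prop:renorm}, together with the computation $\nabla^+\Phi_R = \tfrac12\nabla\varphi_R(\mathbf{x}_+)\psi(2\mathbf{x}_-)$ and $\nabla^-\Phi_R = \varphi_R(\mathbf{x}_+)\nabla\psi(2\mathbf{x}_-)$. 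After the same change of variables as above, three contributions appear. The nontrivial one comes from the $\nabla^-$ piece, whose prefactor $\varepsilon^{-1}V^-_\varepsilon$ is a priori singular; by Taylor expansion it converges uniformly on compacts to $\mathbf{w}\cdot DV^i(\mathbf{u})$, and integration by parts in $\mathbf{w}$ via the identity $\int \mathbf{w}_k \partial_j \psi\, d\mathbf{w} = -\delta_{jk}$ then reduces the full contribution to $-\mathrm{div}\,V^i(\mathbf{u})\varphi_R(\mathbf{u})$, which vanishes by \eqref{divfree}. Similarly $D^+_\varepsilon \to 2\mathrm{div}\,V^i = 0$, while the $V^+_\varepsilon\cdot\nabla^+$ term yields in the limit $-Z^{1,i}_{st}V^i(\mathbf{u})\cdot\nabla\varphi_R(\mathbf{u})$. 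Combining these with the translation-continuity argument of the first step delivers
\begin{equation*}
\Gamma^1_{\varepsilon,st}F^\varepsilon_s(\Phi_R) \xrightarrow{\varepsilon\to 0} -Z^{1,i}_{st}\int h_s\, V^i\cdot\nabla\varphi_R\,d\mathbf{u} = h_s(A^{1,*}_{st}\varphi_R) = (A^1_{st} h_s)(\varphi_R).
\end{equation*}

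The $\Gamma^2_\varepsilon$ limit will proceed in exactly the same spirit, by iterating $\Gamma^{1,j,*}_{V,\varepsilon}$ on $\Phi_R$ and Taylor-expanding $V^\pm_\varepsilon$ to second order, which is legitimate under the assumption $V\in W^{3,\infty}$. The hard part will be to carefully track the iterated $\varepsilon^{-1}$-singularities and their multiple cancellations through higher moment identities of the form $\int \mathbf{w}_k\mathbf{w}_l\partial_j\partial_m\psi\,d\mathbf{w}$, which will again be forced by the divergence-free condition on $V$. After these cancellations, the limit reduces to $h_s(A^{2,*}_{st}\varphi_R) = (A^2_{st}h_s)(\varphi_R)$, completing the proof.
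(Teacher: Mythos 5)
Your approach is genuinely different from the paper's, and it works cleanly for \eqref{con} and for the $\Gamma^1$ part of \eqref{con-2}, but the sketch for $\Gamma^2$ contains an inaccurate claim about the mechanism of cancellation and glosses over where the real difficulty lies. The paper avoids the direct DiPerna--Lions computation entirely: since $f^{1,+}_t,\bar f^{2,+}_t$ are bounded, hence in $L^1(B_{R+1})$, it approximates them by smooth compactly supported $g^1,g^2$ with $|g^i|\le 1$ in $L^1$; the error terms are controlled uniformly in $\varepsilon$ via the $L^1_-L^\infty_+$-boundedness of $T_\varepsilon\Gamma^{j,*}_\varepsilon\Phi_R$ (which is exactly the renormalizability of Proposition~\ref{prop:renorm}); and for the smooth $g^1\otimes g^2$ one moves the operator to the other side by duality and then applies the Leibniz rule for the tensorized driver, $\Gamma^2(g^1\otimes g^2)=(A^2g^1)\otimes g^2+g^1\otimes(A^2g^2)+(A^1g^1)\otimes(A^1g^2)$, after which $\varepsilon\to0$ is trivial and the limit collapses by a second Leibniz computation to $(g^1g^2)(A^{2,*}\varphi_R)$. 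Your route replaces the approximation-plus-duality step by the explicit change of variables $\mathbf{u}=\mathbf{x}_+$, $\mathbf{w}=2\varepsilon^{-1}\mathbf{x}_-$, Taylor expansions of $V^\pm_\varepsilon$, and $L^1_{\mathrm{loc}}$-continuity of translations, which is arguably more elementary for \eqref{con} and for $\Gamma^1$ and does not need the smooth approximants.

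The gap is in your treatment of $\Gamma^2$. You attribute the cancellation of the iterated singularities to the divergence-free condition via moment identities of the type $\int\mathbf{w}_k\mathbf{w}_l\partial_j\partial_m\psi\,d\mathbf{w}$, but this is not what closes the argument. First, the apparent $\varepsilon^{-2}$ divergence in $\Gamma^{2,*}_\varepsilon=A^{2,*}\otimes\mathbb{I}+\mathbb{I}\otimes A^{2,*}+A^{1,*}\otimes A^{1,*}$ cancels because of the geometricity of $\mathbf{Z}$ (the relation $Z^{2,ij}_{st}+Z^{2,ji}_{st}=Z^{1,i}_{st}Z^{1,j}_{st}$), which is also what underlies the iterated factorization $\Gamma^{2,*}_{\varepsilon}=Z^{2,ij}\Gamma^{1,j,*}_{V,\varepsilon}\Gamma^{1,i,*}_{V,\varepsilon}$ you invoke; divergence-free alone does not remove it. Second, if you do use the iterated form, the leading transverse-transverse term produces in the $\mathbf{w}$-integral, after the moment identity $\int\mathbf{w}_k\mathbf{w}_{k'}\partial_m\partial_l\psi=\delta_{mk}\delta_{lk'}+\delta_{mk'}\delta_{lk}$, the quantity $(\mathrm{div}V^j)(\mathrm{div}V^i)+\mathrm{tr}(DV^jDV^i)$; only the first summand is killed by \eqref{divfree}, and the $\mathrm{tr}(DV^jDV^i)$ piece must be tracked and shown to cancel or recombine with other contributions (from $\nabla^-$ hitting $V^{i,-}_\varepsilon$, the $\nabla^+$--$\nabla^-$ cross terms, and the second-order Taylor expansion of $V^\pm_\varepsilon$). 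Your sketch does not explain how this is organized, and the assertion that it is "forced by the divergence-free condition" is misleading. The paper's duality-plus-Leibniz route is precisely the bookkeeping device that makes all of this automatic; if you want to carry out the direct commutator computation you should instead organize the terms according to the geometric Leibniz rule $A^2(g^1g^2)=g^1A^2g^2+g^2A^2g^1+(A^1g^1)(A^1g^2)$, which is where the cancellations are actually encoded.
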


\begin{proof}
Consider first a function $\Psi$ supported in $D_{R}\equiv B_{R+1}\times B_{R+1} \subset \R^{N+1}\times\R^{N+1} $. Then for all functions $v^1, v^2$ we have:
\begin{align*}
|v^1 \otimes v^{2}(\Psi)|&=\bigg|\int_{D_{R}} v^{1}(\mathbf x_++\mathbf x_-) v^{2}(\mathbf x_+-\mathbf x_-)\Psi(\mathbf x_++\mathbf x_-,\mathbf x_+-\mathbf x_-)\dd\mathbf x_-\dd\mathbf x_+\bigg|\\
&\leq \int_{D_{R}} v^{1}(\mathbf x_++\mathbf x_-) v^{2}(\mathbf x_+-\mathbf x_-)\dd\mathbf x_+\sup_{\mathbf y_+}|\Psi(\mathbf y_++\mathbf x_-,\mathbf y_+-\mathbf x_-)|\dd\mathbf x_- .
\end{align*}
Recalling our definition \eqref{eq:def-norm-L1-Linf}, $|v^1 \otimes v^{2}(\Psi)|$ can be further estimated in two ways: on the one hand we have
\begin{align*}
|v^1 \otimes v^{2}(\Psi)|&\leq\|v^{1}\|_{L^1(B_{R+1})} \|v^{2}\|_{L^\infty(B_{R+1})} \|\Psi\|_{L^1_-L^\infty_+},
\end{align*}
and on the other hand we also get
\begin{align*}
|v^1 \otimes v^{2}(\Psi)|&\leq\|v^{2}\|_{L^1(B_{R+1})} \|v^{1}\|_{L^\infty(B_{R+1})}\|\Psi\|_{L^1_-L^\infty_+}.
\end{align*}

In order to apply this general estimate, define a new test function $\Phi_R (\mathbf{x},\mathbf{y}) = \varphi_R (\mathbf{x}_+) \psi
(2\mathbf{x}_-)$, and observe that $\Phi_R$ is compactly supported in the set $D_{R}$.
Since $|f^{1,+}_t|\leq 1$, $| \bar f^{2,+}_t|\leq 1$ it follows that $f^{1,+}_t, \bar  f^{2,+}_t\in L^1(B_{R+1})$ (notice that the localization procedure is crucial for this step). Therefore one may find $g^1,g^2\in C^\infty_c(\R^{N+1})$ such that $|g^1|\leq 1$, $|g^2|\leq 1$ and
\begin{equation}\label{eq:approx-g1-g2}
\|f^{1,+}_t-g^1\|_{L^1(B_{R+1})}+\|\bar f^{2,+}_t-g^2\|_{L^1(B_{R+1})}\leq \delta.
\end{equation}
We now split the difference $F_{t}^{\ep}-h_{t}$ as follows:
\begin{multline}\label{eq:split-Fep-h}
|F^{\varepsilon}_{t} (\Phi_R) -  h_{t} (\varphi_R)| \\
\leq
\left|F^{\varepsilon}_{t} (\Phi_R) -  \lp g_{1} \otimes g_{2} \rp^{\ep}(\Phi_R)\right|
+
\left| \lp g_{1} \otimes g_{2} \rp^{\ep}(\Phi_R) - (g^1g^2) (\vp_R)\right|
+
\left|  (g^1g^2) (\vp_R) - h_{t} (\varphi_R) \right|.
\end{multline}
We shall bound the three terms of the right hand side above separately. Indeed, owing to~\eqref{eq:approx-g1-g2} and the fact that $|g^1|\leq 1$, $|g^2|\leq 1$ \rmk{and $\|T_\ep \Phi_R\|_{L^1_-L^\infty_+}\lesssim 1$ uniformly in $R,\ep$}, we have
\begin{align}\label{eq:cvgce-F-ep-1}
\begin{aligned}
|F^\ep_t(\Phi_R)&-(g^1\otimes g^2)^\ep(\Phi_R)| \leq 
|(g^1\otimes (\bar f^{2,+}_t -g^2))(T_\ep \Phi_R)| +|((f^{1,+}_t -g^1)\otimes \bar f^{2,+}_t)(T_\ep \Phi_R)|
\\
& \lesssim \|g^1\|_{L^\infty(B_{R+1})}  \|\bar f^{2,+}_t -g^2\|_{L^1(B_{R+1})} +  \|\bar f^{2,+}_t\|_{L^\infty(B_{R+1})}  \|f^{1,+}_t -g^1\|_{L^1(B_{R+1})} \lesssim \delta
\end{aligned}
\end{align}
On the other hand, using the continuity of $g^1,g^2$ we have
$$
\lim_{\ep\to 0}(g^1\otimes g^2)(T_\ep \Phi_R) = (g^1g^2) (\vp_R),
$$
and thus $| \lp g_{1} \otimes g_{2} \rp^{\ep}(\Phi_R) - (g^1g^2) (\vp_R)|\le\delta$ for $\ep$ small enough.
Moreover, as in \eqref{eq:cvgce-F-ep-1}, we have
\begin{align*}
|(g^1g^2) (\vp_R) & - h_t (\vp_R)| \\ & \leq \|g^1\|_{L^\infty(B_{R+1})}  \|\bar f^{2,+}_t -g^2\|_{L^1(B_{R+1})} +  \|\bar f^{2,+}_t\|_{L^\infty(B_{R+1})}  \|f^{1,+}_t -g^1\|_{L^1(B_{R+1})}
\\ & \leq \delta. 
\end{align*}
Since $\delta$ is arbitrary we have established~\eqref{con}. 

Let us now turn to \eqref{con-2}.
Observe that by \rmk{Proposition~\ref{prop:renorm}} we have that $T_\ep \Gamma^{1,*}_\ep \Phi_R$ and $T_\ep \Gamma^{2,*}_\ep \Phi_R$ are bounded uniformly in $\ep$ in $L^1_- L^\infty_+$. Specifically, we have:
\begin{align*}
\|T_\ep \Gamma^{1,*}_\ep \Phi_R\|_{L^1_- L^\infty_+} & \leq \| \Gamma^{1,*}_\ep \Phi_R\|_{L^1_- L^\infty_+} = \| \Gamma^{1,*}_\ep \Phi_R\|_{L^1_-(B_1; L^\infty_+(B_R))}
\\ & \leq \| \Gamma^{1,*}_\ep \Phi_R\|_{\mathcal{E}_{R,0}}
\lesssim_V \|  \Phi_R\|_{\mathcal{E}_{R,1}}
\end{align*}
where we used in order the boundedness of $T_\ep$ in $L^1_- L^\infty_+$, the compact support of $\Phi_R$ to go from $L^1$ to $L^\infty$ and finally the renormalizability of $\bf{A}$ in the spaces $(\mathcal{E}_{R,n})_n$ \rmk{(as provided by Proposition~\ref{prop:renorm})}. The same reasoning applies to $T_\ep \Gamma^{2,*}_\ep \Phi_R$.
Similarly to \eqref{eq:split-Fep-h}, in order to establish the limit of $F^\ep_s(\Gamma^{j,*}_\ep \Phi_R)$ for $j=1,2$, it is enough to  consider $(g^1 \otimes g^2)(T_\ep \Gamma^{1,*}_\ep \Phi_R)$ and $(g^1 \otimes g^2)(T_\ep \Gamma^{2,*}_\ep \Phi_R)$ \rmk{for $g^{1},g^{2}$ as in~\eqref{eq:approx-g1-g2}}. 
\rmk{Now, recalling the very definition (\ref{eq:def-F-Q-eps}) of $\mathbf{\Gamma}_\ep^\ast$,}
\begin{align*}
& (g^1 \otimes g^2)(T_\ep \Gamma^{1,*}_\ep \Phi_R) = (g^1 \otimes g^2)( \Gamma^{1,*} T_\ep \Phi_R) 
=(\Gamma^{1}(g^1 \otimes g^2))(  T_\ep \Phi_R) 
\\ & \quad =(A^1 g^1 \otimes g^2 + g^1 \otimes A^1 g^2 )(  T_\ep \Phi_R)
\end{align*}
and hence we end up with:
\begin{equation*}
\lim_{\ep\to\infty} 
(g^1 \otimes g^2)(T_\ep \Gamma^{1,*}_\ep \Phi_R)
=
((A^1 g^1)  g^2 + g^1 (A^1 g^2) )( \vp_R) = ( g^1 g^2)(A^{1,*} \vp_R). 
\end{equation*}
\rmk{Similarly we have}:
\begin{align*}
& (g^1 \otimes g^2)(T_\ep \Gamma^{2,*}_\ep \Phi_R) = (g^1 \otimes g^2)( \Gamma^{2,*} T_\ep \Phi_R) 
=(\Gamma^{2}(g^1 \otimes g^2))(  T_\ep \Phi_R) 
\\ & \quad =(A^2 g^1 \otimes g^2 + g^1 \otimes A^2 g^2 + A^1 g^1 \otimes A^1 g^2)(  T_\ep \Phi_R)
\end{align*}
Therefore we obtain:
\begin{equation*}
\lim_{\ep\to\infty} 
(g^1 \otimes g^2)(T_\ep \Gamma^{2,*}_\ep \Phi_R) 
=
((A^2 g^1)  g^2 + g^1 (A^2 g^2) + (A^1 g^1) (A^1 g^2) )( \vp_R) = ( g^1 g^2)(A^{2,*} \vp_R). 
\end{equation*}
This finishes the proof of \eqref{con-2}. 
\end{proof}

The following contraction principle is the main result of this section. By considering two equal initial conditions, it yields in particular our desired uniqueness result for generalized solutions of equation~\eqref{eq1}.

\begin{proposition}\label{prop:compare}
Let $f^1$ and $f^2$ be two generalized kinetic solutions of~\eqref{eq1} with initial conditions $f^1_0$ and $f^2_0$. Assume that $f^{1}_0 \bar f^{2}_0 \in L^1(\R^{N+1})$ then 
$$
\sup_{t\in[0,T]} \|f^{1,+}_t\bar f^{2,+}_t\|_{L^1(\R^{N+1})} \le  \|f^{1}_0 \bar f^{2}_0\|_{L^1(\R^{N+1})}. 
$$ 
\end{proposition}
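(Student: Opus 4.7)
My plan is to exploit the tensorized equation (Proposition~\ref{prop:tens-eq-conserv}), combined with the passage-to-the-diagonal procedure of Proposition~\ref{prop:convergence}, to derive an inequality for the increments of $h_t(\vp_R) := \int f^{1,+}_t \bar f^{2,+}_t \vp_R\,\dd\mathbf{x}$, and then extract the contraction by sending the localization parameter $R$ to infinity.

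\emph{Step 1: set-up and sign exploitation.} I test the rescaled equation~\eqref{eq:eps1} against $\Phi_R(\mathbf{x},\mathbf{y}) = \vp_R(\mathbf{x}_+)\psi(2\mathbf{x}_-)$ from Proposition~\ref{prop:convergence}, with $\vp_R \geq 0$ a smooth cutoff equal to $1$ on $B_{R/(2\sqrt{2})}$, supported in $B_{R/\sqrt{2}}$, satisfying $\sup_R \|\vp_R\|_{W^{3,\infty}} \lesssim 1$ and $\vp_R \nearrow 1$ pointwise as $R\to\infty$. Decomposing $\delta Q^\ep$ via~\eqref{decompo-incr-q-1} and using that $\sigma^1,\sigma^2$ are non-negative measures while $T_\ep\Phi_R\geq 0$, the two contributions $-\delta\sigma^{1}_{st}(T_\ep\Phi_R)$ and $-\delta\sigma^{2}_{st}(T_\ep\Phi_R)$ are non-positive and I discard them to obtain an upper-bound inequality that is stable under the subsequent limits.

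\emph{Step 2: passage to $\ep\to 0$.} Proposition~\ref{prop:convergence} gives
$$F^\ep_t(\Phi_R)\to h_t(\vp_R),\qquad F^\ep_s((\Gamma^{1,*}_{\ep,st}+\Gamma^{2,*}_{\ep,st})\Phi_R)\to h_s((A^{1,*}_{st}+A^{2,*}_{st})\vp_R).$$
For $\delta(q^1-q^2)_{st}(T_\ep\partial_\xi^+\Phi_R)$, the same diagonal-concentration argument yields a bounded-variation limit $\delta M^R_{st}$ involving $\partial_\xi\vp_R$. For the remainder $F^{\natural,\ep}_{st}(\Phi_R)$, the a priori estimate~\eqref{apriori-bound} combined with Proposition~\ref{prop:2} and Lemma~\ref{lem:conv-mu-theta-ep} yields a subsequential limit $h^{\natural,R}_{st}(\vp_R)$ controlled by $\omega_{*,R}(s,t)^{3/q}$, where $\omega_{*,R}$ bundles the bounded quantities $\mathcal{M}_R\omega_Z^{3/p-2\kappa}$, $\omega_m\omega_Z^{1/p}$, and $\mu([s,t])\,\omega_Z^\kappa$.

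\emph{Step 3: smallness from the divergence-free structure.} Because $\diver_{\xi,x}V^k=0$ (see~\eqref{divfree}), one has $A^{1,*}_{st}\vp_R=-Z^{1,k}_{st}V^k\cdot\nabla\vp_R$ and similarly $A^{2,*}_{st}\vp_R$ involves only derivatives of $\vp_R$, all supported in the annulus $B_{R/\sqrt{2}}\setminus B_{R/(2\sqrt{2})}$ with $\|\nabla^k\vp_R\|_{L^\infty}\lesssim R^{-k}$. Using the uniform a priori bound $\sup_s\|h_s\|_{L^1}\leq\|u_1\|_{L^1}+\|u_2\|_{L^1}=:K$, I obtain
$$|h_s((A^{1,*}_{st}+A^{2,*}_{st})\vp_R)| \lesssim \frac{K}{R}\bigl(\omega_Z(s,t)^{1/p}+\omega_Z(s,t)^{2/p}\bigr),$$
and analogously $|\delta M^R_{st}|\lesssim\|\partial_\xi\vp_R\|_{L^\infty}(m^1+m^2)((s,t]\times B_{R/(2\sqrt{2})}^c)$ vanishes as $R\to\infty$ by finiteness of $m^1,m^2$.

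\emph{Step 4: rough Gronwall and conclusion.} The increments of $h(\vp_R)$ thus satisfy a rough-PDE inequality driven by $\mathbf{A}$ in which the multiplicative Gronwall coefficient is of order $1/R$ and the additive pieces (drift~$M^R$ and remainder~$h^{\natural,R}$) are $R$-uniformly bounded. Applying Lemma~\ref{lemma:basic-rough-gronwall} yields
$$\sup_{t\in[0,T]} h_t(\vp_R) \leq \eta(R)\,h_0(\vp_R) + E(R),\qquad \eta(R)\to 1,\ E(R)\to 0 \text{ as }R\to\infty.$$
The key structural point allowing $E(R)\to 0$ is the cancellation between the $\mu$-component of the remainder bound $\omega_{*,R}^{3/q}$ and the non-positive contribution $-\delta N^R_{st}$ dropped in Step~1: on small intervals the super-linear power $3/q>1$ of $\omega_{\sigma,R}$ is dominated by the linear measure term, so the surviving contribution is controllable by quantities that vanish as $R\to\infty$. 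Monotone convergence $\vp_R\nearrow 1$ finally gives
$$\sup_{t\in[0,T]}\|f^{1,+}_t\bar f^{2,+}_t\|_{L^1(\R^{N+1})}\leq\|f^1_0\bar f^2_0\|_{L^1(\R^{N+1})}.$$

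\emph{Main obstacle.} The delicate technical point is precisely the cancellation argument underlying $E(R)\to 0$: the remainder bound from~\eqref{apriori-bound} does not vanish as $R\to\infty$ in any naive fashion, and extracting the sharp contraction constant~$1$ requires carefully tracking how the $\omega_{\sigma,\ep,R}$-dependence of the remainder interacts with the non-positive measure term that was discarded when passing from equality to inequality in Step~1. Closing this argument with the Rough Gronwall Lemma applied on small sub-intervals (where $\omega_Z$ is small enough to make the super-linear term subordinate to the linear one) is what ultimately yields the claim.
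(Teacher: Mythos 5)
Your set-up through the passage $\ep\to 0$ (Steps 1--2) is sound and matches the paper's Step 1: you test the rescaled equation against $\Phi_R$, drop the non-positive $-\delta\sigma^i$ contributions, and invoke Propositions~\ref{prop:convergence}, \ref{prop:2} and Lemma~\ref{lem:conv-mu-theta-ep} to obtain an inequality of the form
$\delta h_{st}(\vp_R)\leq (A^1_{st}+A^2_{st})h_s(\vp_R)+\omega_\natural(s,t)^{3/q}+\|\partial_\xi^+\Phi_R\|_{\mathcal E_{R,0}}\,\omega_m(s,t)$,
with $\omega_\natural$ as in~\eqref{eq:def-omega-natural}. Step~3 is also a correct observation: thanks to $\diver_{\xi,x}V=0$, the driver hits $\vp_R$ only through its derivatives, so with $\vp_R=\vp(\cdot/R)$ the transport term is $O(1/R)$ once $\sup_t h_t(\1)<\infty$ is in hand.

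The gap is in Step 4, and it is structural, not merely a matter of tracking constants. Lemma~\ref{lemma:basic-rough-gronwall} produces a bound of the form $\sup_t G_t\leq 2\exp(\omega_1(0,T)/(\al L))\{G_0+\ldots\}$; the factor $2$ is intrinsic to the absorption step of its proof and does not disappear when the multiplicative coefficient is small, so $\eta(R)$ cannot tend to $1$. Moreover, the additive remainder contribution $\omega_\natural(s,t)^{3/q}$ from~\eqref{eq:def-omega-natural} is $R$-independent (it involves the fixed finite measure $\mu$), so $E(R)$ does not tend to $0$ along a Gronwall argument either. Your ``main obstacle'' paragraph correctly senses the problem, but the hoped-for cancellation between the discarded measure term and the $\mu$-component of $\omega_\natural$ is not available: once you have passed to the inequality and to the limit, that term is gone.

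The route that actually closes the argument, and the one followed in the paper, decouples the two roles you are asking Gronwall to play. First one establishes the uniform $L^1$ bound $\sup_t h_t(\1)<\infty$ (the paper does this via Gronwall with a polynomially decaying weight $\psi_L$ and then monotone convergence; one could also deduce it directly from the moment condition~\eqref{integrov} in Definition~\ref{genkinsol}, since $h_t(\1)=\int\int(\alpha-\beta)^+\,d\nu^{1,+}_{t,x}(\alpha)\,d\nu^{2,+}_{t,x}(\beta)\,dx$). Then, and this is the key step you are missing, one sends $L\to\infty$ in the inequality for $\delta h_{st}(\psi_L)$ and uses $\diver V=0$ to get $(A^1_{st}+A^2_{st})h_s(\1)=0$ \emph{exactly}, leaving only $\delta h_{st}(\1)\lesssim\omega_\natural(s,t)^{3/q}$. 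Finally one telescopes over a partition with vanishing mesh: since $\omega_\natural$ is a control raised to a power $>1$ (recall $\tfrac{3}{p}-2\kappa-1>0$, $\tfrac1p>0$, $\kappa>0$) and the controls are regular, the right-hand side tends to $0$, giving the sharp contraction $h_t(\1)\leq h_0(\1)$ without any Gronwall-induced constant. This telescoping-with-superlinear-control argument is where the constant $1$ comes from; it cannot be replicated by Lemma~\ref{lemma:basic-rough-gronwall} alone.
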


\begin{proof}  
Our global strategy is to take limits in \eqref{eq:eps1} in order to show the \rmk{contraction} principle. We now divide the proof in several steps.

\noindent
\textit{Step 1: Limit in $\ep$.}
Recall that $\Phi_R$ has been defined by \eqref{eq:def-Phi-R}. Applying \eqref{eq:eps1} to the test function $\Phi_{R}$ yields:
\begin{equation}\label{eq;dynamics-test-Phi-R}
\delta F^\varepsilon_{st}(\Phi_{R})= \delta Q^\varepsilon_{st}(\Phi_{R})
+ F_s^\varepsilon((\Gamma^{1,*}_{\varepsilon,st}+\Gamma^{2,*}_{\varepsilon,st})\Phi_{R})
+F_{st}^{\natural,\varepsilon} (\Phi_{R}).
\end{equation}
Furthermore, similarly to \eqref{appli-1-q-ep-theta}, we have that
\[ \delta Q_{st}^{\varepsilon} (\Phi_R)  \leq - \delta \sigma^1_{st} (T_{\varepsilon} \Phi_R) 
- \delta \sigma_{st}^2 (T_{\varepsilon} \Phi_R) 
+ \omega_m(s,t) \|\partial_\xi^
+ \Phi_R \|_{\mathcal{E}_{R,0}} \leq 
\|\partial_\xi^{+} \Phi_R \|_{\mathcal{E}_{R,0}}  \, \omega_m(s,t) . 
\]
\rmk{By using Proposition~\ref{prop:convergence}, we can take limits in  relation \eqref{eq;dynamics-test-Phi-R}, which, together with (\ref{estim-f-natural-ep}), gives the following bound for the increments of the path $h_t =  f^{1,+}_t \bar f^{2,+}_t$: for every interval $I$ such that $\omega_Z(I)\leq L$ and all $s<t\in I$,}
\begin{equation}\label{eq:bnd-delta-h-1}
\delta h_{st} (\vp_R) \leq (A^1_{s t} + A^2_{s t})_{} h_s
   (\vp_R) +\limsup_{\ep\to0} \, \omega_{\ast,\ep,R}(s,t) ^{\frac{3}{q}} +  \|\partial_\xi^+ \Phi_R \|_{\mathcal{E}_{R,0}} \omega_m(s,t).
\end{equation}
where $\omega_{\ast,\ep,R}$ is the control defined in \eqref{estim-f-natural-ep}. Application of Proposition~\ref{prop:2} and Lemma~\ref{lem:conv-mu-theta-ep} gives a uniform bound in $R$ on $\limsup_{\ep\to0}\omega_{\ast,\ep,R}(s,t)$ in terms of the control $\omega_\natural$  given by
\begin{equation}\label{eq:def-omega-natural}
\omega_{\natural}(s,t) ^{\frac{3}{q}}
= \omega_Z(s,t)^{\frac{3}{p}-2\kappa}
+\omega_m(s,t)\omega_Z(s,t)^\frac{1}{p}+\mu([s,t]) \omega_Z(s,t)^{\kappa }.
\end{equation}
Namely, we have
$
\limsup_{\ep\to0}\omega_{\ast,\ep,R}(s,t) \leq \omega_{\natural}(s,t),
$
hence we can recast inequality \eqref{eq:bnd-delta-h-1} as:
\begin{equation}\label{eq:h-test-phi-R}
\delta h_{st} (\vp_R) \leq (A^1_{s t} + A^2_{s t})_{} h_s(\vp_R) 
+\omega_{\natural}(s,t)^{\frac{3}{q}} 
+  \|\partial_\xi^+ \Phi_R \|_{\mathcal{E}_{R,0}} \, \omega_m(s,t) \ ,
\end{equation}
\rmk{for all $s<t\in I$ with $\omega_Z(I)\leq L$.}

\smallskip

\noindent
\textit{Step 2: Uniform $L^{1}$ bounds.}
We now wish to test the increment $\delta h_{st}$ against the function $\1(x,\xi)= 1$ in order to get uniform (in $t$) $L^{1}$ bounds on $h$. This should be obtained by taking the limit $R\to\infty$ in~\eqref{eq:h-test-phi-R}. However, the difficulty here is the estimation of the term $(A^1_{s t} + A^2_{s t}) h_s
   (\vp_R)$, uniformly in $R$. To circumvent this problem, we want to choose another test function which is easier to estimate but with unbounded support. Namely, instead of the function $\vp_{R}$ of~Proposition \ref{prop:convergence}, let us consider a function $\vp_{R,L}(\mathbf{x}_+)=\vp(\mathbf{x}_+/R) \psi_L(\mathbf{x}_+)$. In this definition $\psi_L(\mathbf{x}_+) =\psi(\mathbf{x}_+/L)$ with $\psi(\mathbf{x}_+) = (1+|\mathbf{x}_+|^2)^{-M}$ for $M>(N+1)/2$, and $\vp$ is a smooth compactly supported function with $\vp |_{B_{1/4}}=1$.  

With these notations in hand, relation \eqref{eq:h-test-phi-R} is still satisfied for the function $\vp_{R,L}$:
\begin{equation}\label{eq:h-test-phi-R-L}
\delta h_{st} (\vp_{R,L}) \leq (A^1_{s t} + A^2_{s t}) h_s(\vp_{R,L}) 
+\omega_{\natural}(s,t)^{\frac{3}{q}} 
+  \|\partial_\xi^+ \Phi_{R,L} \|_{\mathcal{E}_{R,0}} \, \omega_m(s,t),
\end{equation}
where $\Phi_{R,L}$ is defined similarly to \eqref{eq:def-Phi-R}. We can now take limits as $R$ goes to infinity in \eqref{eq:h-test-phi-R-L}. That is, since $A^{j,*}_{s t}\vp_{R,L}$ is an element of $L^{1}$, uniformly in $R$ and for for $j=1,2$, we have
\begin{equation*}
\lim_{R\to\infty} (A^1_{s t} + A^2_{s t}) h_s(\vp_{R,L})
= (A^1_{s t} + A^2_{s t}) h_s(\psi_{L})
\end{equation*}
In addition, \rmk{according to the definition \eqref{decompo-incr-q-1} of $\partial_\xi^+$,} we have that if $g=g(\mathbf{x}_{+})$ then $\partial_{\xi}^{+}g=g'(\mathbf{x}_{+})$, while $\partial_{\xi}^{+}g=0$ whenever $g=g(\mathbf{x}_{-})$. Therefore, it is readily checked that:
$$\partial_\xi^+ \Phi_{R,L} (\mathbf{x},\mathbf{y}) 
=R^{-1} (\partial_\xi \varphi) (\mathbf{x}_+/R) \psi(\mathbf{x}_+/L) \psi
(2\mathbf{x}_-) + L^{-1} \varphi (\mathbf{x}_+/R) (\partial_\xi\psi)(\mathbf{x}_+/L) \psi
(2\mathbf{x}_-),$$ 
and thus $\|\partial_\xi^+ \Phi_{R,L} \|_{\mathcal{E}_{R,0}} \lesssim( R^{-1}+L^{-1})$.
 Hence, invoking our last two considerations, we can take limits as $R\to\infty$ in relation~\eqref{eq:h-test-phi-R-L} to deduce, \rmk{for all $s< t\in I$ with $\omega_Z(I)\leq L$,}
\begin{equation}\label{eq:bnd-delta-h-2}
\delta h_{st} (\psi_L) \lesssim  (A^1_{s t} + A^2_{s t}) h_s(\psi_L)
+\omega_{\natural}(s,t)^{\frac{3}{q}}
+ C L^{-1}\omega_m(s,t).
\end{equation}

We are now in a position to apply our Gronwall type Lemma \ref{lemma:basic-rough-gronwall} to relation~\eqref{eq:bnd-delta-h-2}. To this aim, we can highlight the reason to choose $\psi_L$ as a new test function. Indeed, \rmk{invoking Proposition \ref{prop:renorm}} it is easy to show that for \rmk{this particular test function we have} 
\[ 
|(A^1_{s t} + A^2_{s t})_{} h_s (\psi_L)| \lesssim_V h_s(\psi_L) \omega_Z(s,t)^{1/p} 
\]
for some constant depending only on the vector fields $V$ but uniform in $L$. The other terms on the right hand side of~\eqref{eq:bnd-delta-h-2} are controls.
\rmk{Note that even though $\mu([s,t])$ is not superadditive due to the possible presence of jumps, $\mu$ is a positive measure anyway. Therefore one has the following property which can be used as a replacement for superadditivity}
$$\mu([s,u])+\mu([u,t])\leq 2\mu([s,t]).$$
Hence a simple modification of  Lemma \ref{lemma:basic-rough-gronwall} (to take into account this small deviation from superadditivity) gives readily
$$
\sup_{t\in[0,T]}h_t(\psi_L) \lesssim 1,
$$
where the constant is uniform in $L$ and depends only on $\omega_m,\omega_Z,V,\mu$ and $h_0(\psi_L)$. This implies that if $h_0 \in L^1$ we can send $L\to \infty$ and get that
\begin{equation}\label{eq:h-unf-L1}
\sup_{t\in[0,T]}h_t(\1) \lesssim 1,
\end{equation}
by monotone convergence. Summarizing, we have obtained that $h_t$ is in $L^1$ uniformly in $t\in[0,T]$. 

\noindent
\textit{Step3: Conclusion.}
Having relation \eqref{eq:h-unf-L1} in hand, we can go back to equation \eqref{eq:bnd-delta-h-2} for $h_t(\psi_L)$, and send $L\to \infty$ therein. We first resort to the fact that $\sup_{t\in[0,T]}h_t(\1)$ is bounded in order to get that:
\[ 
\lim_{L\to \infty}(A^1_{s t} + A^2_{s t}) h_s (\psi_L) 
\lesssim
(A^1_{s t} + A^2_{s t}) h_s (\1)
= 0 ,
\]
where the second identity is due to the fact that $\diver V=0$ (as noted in \eqref{divfree}). Thus, the limiting relation for $\delta h_{st}(\psi_{L})$ is:
\begin{equation}\label{eq:h1}
\delta h_{st}(\1) 
\lesssim 
\omega_{\natural}(s,t) ^{\frac{3}{q}},
\end{equation}
for all $s,t\in [0,T]$ at a sufficiently small distance from each other.  
Thus, one may telescope \eqref{eq:h1} on a partition $\{0=t_0<t_1<\cdots<t_n=t\}$ whose mesh vanishes with $n$. Invoking our expression~\eqref{eq:def-omega-natural} for $\omega_{\natural}$, this entails:
\begin{align*}
 h_t (\1)- h_0 (\1)& =\sum_{i=0}^{n-1} \delta h_{t_{i}t_{i+1}} (\1)\\
 &\lesssim \Big[\sup_{i=0,\dots,n-1}\omega_Z(t_i,t_{i+1})\Big]^{(\frac{3}{p}-2\kappa-1)\wedge\frac{1}{p}\wedge \kappa}\big(\omega_Z(0,t)+ \omega_m(0,t)+2\mu([0,t])\big)\ ,
\end{align*}
\rmk{due to $\frac{3}{p}-2\kappa-1 >0$.} Eventually we send $n\to\infty$ and use the fact that $\omega_{Z}$ is a regular control. This yields:
\begin{align*}
\|f^{1,+}_t\bar f^{2,+}_t\|_{L^1_{x,\xi}}=h_t(\1)\leq h_0(\1)=\|f^1_0\bar f^2_0\|_{L^1_{x,\xi}},
\end{align*}
which ends the proof.
\end{proof}

\rmk{Once endowed with the result of Proposition \ref{prop:compare}, we can use standard arguments on kinetic equations (such as those in \cite{debus}) to derive uniqueness of the solution to equation \eqref{eq1},} as well as the reduction of a generalized kinetic solution to a kinetic solution and the $L^1$-contraction property. This is the contents of the following corollary.

\begin{corollary}
Under the assumptions of Theorem \ref{thm:main}, uniqueness holds true for equation~\eqref{eq1}. Furthermore, Theorem \ref{thm:main} (ii) and (iii) are satisfied.
\end{corollary}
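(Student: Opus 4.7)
My plan is to reduce all three claims --- uniqueness, reduction (Theorem~\ref{thm:main}(ii)), and $L^1$-contraction (Theorem~\ref{thm:main}(iii)) --- to the single contraction principle of Proposition~\ref{prop:compare}. The bridge is the elementary pointwise identity
\begin{equation*}
\int_\R \ind_{u_1(x) > \xi}\,(1 - \ind_{u_2(x) > \xi})\,\dd \xi = (u_1(x) - u_2(x))^+ ,
\end{equation*}
which, after integration in $x$, reads $\|(u_1-u_2)^+\|_{L^1_x} = \|f^1 \bar f^2\|_{L^1_{x,\xi}}$ whenever $f^i = \ind_{u_i > \xi}$. Any $L^1_{x,\xi}$-bound on $f^{1,+}\bar f^{2,+}$ thus transfers immediately to an $L^1_x$-bound on $(u_1 - u_2)^+$.

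For the reduction statement (ii), I would first apply Proposition~\ref{prop:compare} to the pair $(f,f)$, where $f$ is a generalized kinetic solution with indicator initial datum $f_0 = \ind_{u_0 > \xi}$. Since $f_0(1-f_0) \equiv 0$, the conclusion is
\begin{equation*}
\sup_{t \in [0,T]} \|f^+_t(1 - f^+_t)\|_{L^1_{x,\xi}} = 0 ,
\end{equation*}
so that $f^+_t(x,\xi) \in \{0,1\}$ almost everywhere for every $t$. Next I would exploit the fact that $\xi \mapsto f^+_t(x,\xi) = \nu^+_{t,x}((\xi,+\infty))$ is nonincreasing with limits $1$ at $-\infty$ and $0$ at $+\infty$ (the latter forced by the integrability assumption \eqref{integrov}) to conclude that $f^+_t(x,\cdot)$ must be the indicator of a half-line $(-\infty, u^+(t,x))$ for a measurable function $u^+$. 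The bound \eqref{integrov} will then give $u^+ \in L^\infty(0,T; L^1(\R^N))$. The identical argument applied to $f^-$ produces a function $u^-$, and since $f^+_t = f^-_t = f_t$ for a.e. $t$ one gets $u^+ = u^-$ a.e., which yields (ii) with $u := u^+$.

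For the contraction (iii) and uniqueness (i), I would take two kinetic solutions $u_1, u_2$ with initial data $u_{1,0}, u_{2,0} \in L^1$, pass to the representatives $u^+_i$ supplied by (ii), and combine Proposition~\ref{prop:compare} with the Fubini identity above to obtain, for every $t \in [0,T]$,
\begin{equation*}
\|(u^+_1(t) - u^+_2(t))^+\|_{L^1_x} = \|f^{1,+}_t \bar f^{2,+}_t\|_{L^1_{x,\xi}} \leq \|f^1_0 \bar f^2_0\|_{L^1_{x,\xi}} = \|(u_{1,0} - u_{2,0})^+\|_{L^1_x} ,
\end{equation*}
which is (iii) in the pointwise-in-time form of Remark~\ref{u+} and reduces to the a.e.-in-$t$ statement of Theorem~\ref{thm:main}(iii) after passing to equivalence classes. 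Specializing to $u_{1,0} = u_{2,0}$ and then exchanging the roles of $u_1$ and $u_2$ gives $\|u^+_1(t) - u^+_2(t)\|_{L^1_x} = 0$ for all $t$, hence $u_1 = u_2$ in $L^\infty(0,T; L^1(\R^N))$.

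The only genuinely nontrivial step in this reduction is the pointwise dichotomy in (ii) --- passing from the $L^1$ vanishing of $f^+(1-f^+)$ to the conclusion that $f^+$ is almost everywhere an indicator function --- but this is a now-classical Young-measure argument (see \cite{debus, perth}). All remaining manipulations are algebraic, so I expect the bulk of the analytic difficulty to have already been concentrated in Proposition~\ref{prop:compare}.
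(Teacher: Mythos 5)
Your proposal is correct and follows essentially the same route as the paper: apply Proposition~\ref{prop:compare} with $f^1 = f^2 = f$ to force $f^+(1-f^+) \equiv 0$, then use the monotonicity/Young-measure structure of kinetic functions to conclude $f^+$ is an indicator (this is precisely the Fubini-plus-probability-measure argument in the paper), and finally derive the contraction and uniqueness from the Fubini identity $\|(u_1-u_2)^+\|_{L^1_x} = \|f^1\bar f^2\|_{L^1_{x,\xi}}$ combined with Proposition~\ref{prop:compare}. The small additions you make --- explicitly remarking that $f^+_t(x,\cdot)$ is nonincreasing with limits $1$ and $0$, and that $u^+ = u^-$ a.e.\ --- are accurate and consistent with the paper's argument.
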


\begin{proof}
Let us start by the reduction part, that is Theorem \ref{thm:main} (ii).
Let then $f$ be a generalized kinetic solution to \eqref{eq1} with an initial condition at equilibrium: $f_0=\ind_{u_0>\xi}$ . Applying Proposition \ref{prop:compare} to $f^1=f^2=f$ leads to
\begin{align*}
\sup_{0\leq t\leq T}\|f^{+}_t\bar f^{+}_t\|_{L^1_{x,\xi}}\leq \|f_0\bar f_0\|_{L^1_{x,\xi}}=\|\ind_{u_0>\xi}\ind_{u_0\leq \xi}\|_{L^1_{x,\xi}}=0.
\end{align*}
Hence $f^+_t(1-f^+_t)=0$ for a.e. $(x,\xi)$. Now, the fact that $f^+_t$ is a kinetic function for all $t\in[0,T)$ gives the
conclusion: indeed, by Fubini's Theorem, for any $t \in [0, T )$, there is a set $B_t$ of
full measure in $\R^N$ such that, for all $x\in B_t$, $f^+_t(x,\xi)\in\{0,1\} $ for
a.e. $\xi\in\R$. Recall that $-\partial_\xi f^+_t(x, \cdot)$ is a probability measure on $\R$ hence,
necessarily, there exists $u^+: [0,T)\times\R^N\rightarrow\R$ measurable such that $f^+_t(x,\xi)=\ind_{u^+(t,x)>\xi}$ for a.e. $(x,\xi)$ and all $t\in[0,T)$. Moreover, according to \eqref{integrov}, it holds
\begin{equation}\label{a1}
\sup_{0\leq t\leq T}\int_{\R^N}|u^+(t,x)|\,\dif x=\sup_{0\leq t\leq T}\int_{\R^N}\int_\R|\xi|\,\dif\nu^+_{t,x}(\xi)\,\dif x<\infty.
\end{equation}
Thus $u^+$ is a kinetic solution and Theorem \ref{thm:main}(ii) follows.

In order to prove the $L^1$-contraction property (that is Theorem \ref{thm:main}(iii)), consider two kinetic solutions $u_{1},u_{2}$ of equation \eqref{eq1} with respective initial conditions $u_{1,0},u_{2,0}$. Then we have:
$$(u_1-u_2)^+=\int_\R\ind_{u_1>\xi}\overline{\ind_{u_2>\xi}}\,\dif \xi \ .$$
Let ${u}^+_1$ and ${u}^+_2$ denote the representatives of $u_{1}, u_{2}$ \rmk{as constructed above}. Then we apply Proposition \ref{prop:compare} and obtain
\begin{align*}
\big\|\big(u^+_1(t) -u^+_2(t)\big)^+\big\|_{L^1_x}&=\| f^{1,+}_t\overline{f}^{2,+}_t\|_{L^1_{x,\xi}}\leq\| f^{1,+}_{0}\overline{f}^{2,+}_{0}\|_{L^1_{x,\xi}}=\|(u^+_{1,0}-u^+_{2,0})^+\|_{L^1_x}
\end{align*}
which completes the proof of Theorem \ref{thm:main}(iii). Uniqueness is obtained in the same way, by considering two identical initial conditions.
\end{proof}

\section{Rough conservation laws III: A priori estimates}\label{sec:apriori}

In this section we will establish a priori $L^{q}$-estimates for kinetic solutions to \eqref{eq1}. We thus consider a kinetic solution $u$ to \eqref{eq1} and let $f_{t}(x,\xi)=\ind_{u_{t}(x)>\xi}$ be the corresponding kinetic function, \rmk{to which we can associate a Borel measure $m$ (along Definition \ref{genkinsol}(iii))}. Let us also introduce some useful notation for the remainder of the section.

\begin{notation}\label{not:apriori}
We denote by $\chi_{t}$ the function $\chi_{t}(x,\xi) = f _{t}(x,\xi)- \ind_{\xi<0}$. We also define the functions $\beta_{q},\gamma_q:\R^{N+1}\to\R$, where $q\geq 0$, as follows:
\begin{equation*}
\beta_{q+1}(x,\xi)=\xi|\xi|^{q}, 
\quad\text{and}\quad
\gamma_{q}(x,\xi)=\begin{cases} |\xi|^q & \text{if $q>0$,}\\ 1 & \text{if $q=0$.} \end{cases}
\end{equation*}
\end{notation}

The interest of the functions $\beta_{q},\gamma_q$ lies in the following elementary relations, which are labeled here for further use:
$$
\partial_\xi\beta_{q+1}=(q+1)\gamma_{q},\qquad \partial_\xi\gamma_{q+2}=(q+2)\beta_{q+1},
$$
and consequently for $q \geq 2$ we have
\begin{equation}\label{eq:moments-u}
\chi_t(\beta_{q-1}) =\frac{1}{q} \int_{\R^N} |u_{t}(x)|^{q}\, \dd x  ,
\quad\text{and}\quad
|\chi_t|(\1) =\chi_t(\mathrm{sgn}(\xi)) = \int_{\R^N} |u_{t}(x)|\,\dd x.
\end{equation}
With these preliminary notations in mind, our a priori estimate takes the following form.

\begin{theorem}
\label{th:apriori-est-existence}
Assume Hypothesis \ref{hyp:V} holds true. Then $u$ satisfies the following relation:
\begin{equation}\label{eq:L1}
\sup_{t\in[0,T]}\|u(t,\cdot)\|_{L^{1}} \le \|u(0,\cdot)\|_{L^{1}}
\end{equation}
and, for all $q\geq 2$,
\begin{equation}\label{eq:Lq}
\sup_{t\in[0,T]}\|u(t,\cdot)\|^{q}_{L^{q}} + (q-1)\delta m_{0T}(\gamma_{q-2}) \lesssim_{\bfA,q} \|u(0,\cdot)\|^{q}_{L^{q}}+ \|u(0,\cdot)\|^2_{L^{2}} + \|u(0,\cdot)\|_{L^{1}} . 
\end{equation}
\end{theorem}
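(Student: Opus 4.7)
\bigskip

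\noindent
\textbf{Plan of proof.} The strategy follows the template already illustrated in Section~\ref{subsec:first-application}: derive a local-in-time estimate of the form \eqref{eq:a-priori-g} for $G_t := \|u(t,\cdot)\|_{L^q}^q$ (plus a positive measure term) via Corollary~\ref{cor:apriori}, and then conclude by the Rough Gronwall Lemma~\ref{lemma:basic-rough-gronwall}. The test functions dictated by the $L^q$-norm are $\varphi(x,\xi) = \beta_{q-1}(\xi)$, which are however neither in $E_3$ nor compactly supported. We therefore propose to work at the level of approximating test functions
\begin{equation*}
\varphi_{L,R}(x,\xi) := \beta_{q-1}(\xi)\, \zeta_R(\xi)\, \psi_L(x)\ ,
\end{equation*}
where $\psi_L(x) = \psi(x/L)$ and $\zeta_R(\xi) = \zeta(\xi/R)$ are smooth cutoffs with $\psi,\zeta \in C^\infty_c$, $\psi,\zeta \equiv 1$ on the unit ball and bounded by $1$ everywhere. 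Since $\chi_t = f^+_t -\ind_{\xi<0}$ differs from $f^+_t$ by a time-independent function, $\chi$ satisfies the very same rough equation \eqref{eq:delta-f-plus-weak}, and applying this equation to $\varphi_{L,R}$ yields
\begin{equation}\label{eq:aprioriplan}
\delta\chi_{st}(\varphi_{L,R}) = \chi_s\bigl((A^{1,*}_{st} + A^{2,*}_{st})\varphi_{L,R}\bigr) - (q-1)\,m\bigl(\ind_{(s,t]}\gamma_{q-2}\zeta_R\psi_L\bigr) - m\bigl(\ind_{(s,t]}\beta_{q-1}\zeta_R'\psi_L\bigr) + \chi^\natural_{st}(\varphi_{L,R}).
\end{equation}

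\noindent The decisive use of Hypothesis~\ref{hyp:V}, namely $V(\cdot,0)=0$, comes in the estimation of the driver terms: by a first-order Taylor expansion in $\xi$, one obtains $|b_k(x,\xi)| + |a_{ik}(x,\xi)| \lesssim_V |\xi|$, which combined with the identity $\int|\chi_s(x,\xi)|\,|\xi|^{q-1}d\xi = |u_s(x)|^q/q$ gives an estimate of the shape
\begin{equation*}
\bigl|\chi_s\bigl((A^{1,*}_{st}+A^{2,*}_{st})\varphi_{L,R}\bigr)\bigr| \lesssim_{V,q} \bigl(\|u_s\|_{L^q}^q + L^{-1}\|u_s\ind_{\mathrm{supp}\psi_L}\|_{L^{q+1}}^{q+1}\bigr)\,\omega_Z(s,t)^{1/p},
\end{equation*}
and analogously for the second iterated term with exponent $2/p$. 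An identical Taylor argument controls $m(\ind_{(s,t]}\partial_\xi\varphi_{L,R})$ by $\omega_\mu(s,t)\|\varphi_{L,R}\|_{E_2}$ with $\omega_\mu(s,t) := m((s,t]\times\R^{N+1})$, which is precisely the input \eqref{cond-mu-cor} demanded by Corollary~\ref{cor:apriori} and thus provides the remainder bound
\begin{equation*}
\|\chi^\natural_{st}\|_{E_{-3}} \lesssim G_{st}^{L,R}\,\omega_Z(s,t)^{3/p} + \omega_\mu(s,t)\,\omega_Z(s,t)^{(3-p)/p}
\end{equation*}
on any interval with $\omega_Z$-length smaller than a universal constant $L_0$.

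\noindent Feeding this back into \eqref{eq:aprioriplan} and keeping the negative measure term $-(q-1)\,m(\ind_{(s,t]}\gamma_{q-2}\zeta_R\psi_L)$ on the left-hand side, we arrive at an inequality
\begin{equation*}
\delta\chi_{st}(\varphi_{L,R}) + (q-1)\,m\bigl(\ind_{(s,t]}\gamma_{q-2}\zeta_R\psi_L\bigr) \leq C\,\Bigl(\sup_{s\leq r\leq t}\chi_r(\varphi_{L,R})\Bigr)\omega_Z(s,t)^{1/p} + \omega^{L,R}_2(s,t),
\end{equation*}
where $\omega_2^{L,R}$ collects the contributions involving $L^{-1}$, the $\zeta_R'$ term, and the measure $m$ multiplied by positive powers of $\omega_Z(s,t)$. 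At this point the Rough Gronwall Lemma~\ref{lemma:basic-rough-gronwall} applies on $[0,T]$ (the number of sub-intervals of $\omega_Z$-length $\leq L_0$ being finite), yielding a uniform-in-$t$ bound on $\chi_t(\varphi_{L,R})$ together with a bound on $m(\ind_{[0,T]}\gamma_{q-2}\zeta_R\psi_L)$. The proof is completed by sending $R\to\infty$ (which kills the $\zeta_R'$ term by dominated convergence since $\chi$ has finite $|\xi|$-moments by \eqref{integrov}) and then $L\to\infty$ (monotone convergence on the measure term and the principal part, exploiting $L^{-1}\to 0$ and the finite-in-space support of $\psi_L$). The $L^1$-bound \eqref{eq:L1} is obtained along identical lines with $\beta_{q-1}$ replaced by a smooth odd approximation of $\mathrm{sgn}(\xi)$; there the derivative of the sign produces a nonnegative measure term which is simply discarded.

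\noindent The \emph{main technical obstacle} lies in step two: the bound on $\chi_s(A^{*}_{st}\varphi_{L,R})$ produced by the naive calculation above contains the term $L^{-1}\|u_s\|_{L^{q+1}}^{q+1}$ which involves a higher moment of $u$ that is a priori not controlled. Circumventing this requires either (i) using the extra factor of $\zeta_R$ to turn this into $L^{-1}R\,\|u_s\|_{L^q}^q$ and balancing the limits $R\to\infty$, $L\to\infty$ so that $R/L\to 0$, or (ii) integrating by parts in $x$ (using $\mathrm{div}_{x,\xi}V=0$) so that the troublesome term is transferred onto $\partial_{x_i}\psi_L$ multiplied by $a_{ik}(x,\xi)\gamma_{q-1}(\xi)\cdot\text{lower order}$, which is again of order $L^{-1}$ times a moment of order $\leq q$. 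Both routes are routine once the rough-path framework is set up as above.
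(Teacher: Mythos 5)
Your route differs from the paper's, and as sketched it has a genuine gap that you do not flag. You apply Corollary~\ref{cor:apriori} on the \emph{unweighted} scale $E_n = W^{n,1}\cap W^{n,\infty}$ to obtain $\|\chi^\natural_{st}\|_{E_{-3}}\lesssim \mathcal{N}[\chi]\,\omega_Z(s,t)^{3/p}+\omega_\mu(s,t)\,\omega_Z(s,t)^{(3-p)/p}$, uniformly in $L,R$, and then intend to feed $\chi^\natural_{st}(\varphi_{L,R})$ back into the inequality for $\delta\chi(\varphi_{L,R})$. But pairing a uniform $E_{-3}$-bound with $\varphi_{L,R}=\beta_{q-1}\zeta_R\psi_L$ only gives $|\chi^\natural_{st}(\varphi_{L,R})|\leq \|\chi^\natural_{st}\|_{E_{-3}}\|\varphi_{L,R}\|_{E_3}$, and $\|\varphi_{L,R}\|_{E_3}$ grows like $R^{q-1}$ (the $W^{3,\infty}$ part) and like $R^{q}L^{N}$ (the $W^{3,1}$ part). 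This contribution is therefore not absorbed in your $\omega_2^{L,R}$ and the limit $R\to\infty$ cannot be taken. You correctly identify the \emph{other} obstacle (the $L^{-1}\|u_s\|_{L^{q+1}}^{q+1}$ moment from the driver), but the growth of the remainder is an independent and equally serious one. The paper avoids it entirely: it works on a \emph{weighted} scale $E_n^q$ with weight $w_{q-2}=1+\gamma_{q-2}$, and, crucially, it does not pair a norm bound for $\chi^\natural$ with $\beta_{q-1}$ (which is not even in $E_3^q$). Instead it analyzes $\chi^\natural(\beta_{q-1})$ directly via its $\delta$-decomposition
\begin{equation*}
\delta\chi^\natural_{sut}(\beta_{q-1})=(\delta\chi)_{su}(A^{2,*}_{ut}\beta_{q-1})+\chi^\sharp_{su}(A^{1,*}_{ut}\beta_{q-1}),
\end{equation*}
observing that $A^{1,*}\beta_{q-1}$ and $A^{2,*}\beta_{q-1}$ lose one power of $\xi$ and are therefore bounded by the weight $w_{q-2}$; this lets it bound $|\chi(\cdot)|$ by $2|\chi|(\1)+\chi(\beta_{q-1})$ and close the estimate via the sewing lemma directly on $\chi^\natural(\beta_{q-1})$. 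That is the missing idea in your scheme.

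A second, smaller departure concerns the $L^1$ bound \eqref{eq:L1}. You propose to test against a smooth odd approximation of $\mathrm{sgn}(\xi)$ and discard the nonnegative kinetic-measure term. The paper explicitly discusses and declines this route ("one would have to prove that the rough driver behaves well under this limit and that we have uniform estimates"), and instead gets \eqref{eq:L1} for free from the $L^1$-contraction of Section~\ref{sec:uniq}: under Hypothesis~\ref{hyp:V} the constant $0$ is a kinetic solution, so contraction against $u_2\equiv 0$ gives $\sup_t\|u_t\|_{L^1}\leq\|u_0\|_{L^1}$. Your $\mathrm{sgn}$-approximation might be made rigorous (and indeed $A^{j,*}\mathrm{sgn}$ formally vanishes because $V(x,0)=0$), but as it stands it is hand-waved and would require the same type of uniform remainder control that is the crux of the whole argument. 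Since \eqref{eq:L1} is in turn needed to close the $L^q$ estimate (it controls $\sup_t|\chi_t|(\1)$ on the right of \eqref{eq:apriori-Lq-2}), this is not a cosmetic point.
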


\begin{remark}\label{rem:rig}
The above result gives a priori estimates for kinetic solutions that depend only on the rough regularity of the driver $\bfA$, and are therefore well-suited for the proof of existence in the next section. Note that in order to make all the arguments below entirely rigorous, it is necessary to either work at the level of a smooth approximation \rmk{(just as in the example treated in Section \ref{subsec:first-application})} or to introduce an additional cut-off of the employed test functions. Since we will only apply Theorem \ref{th:apriori-est-existence} to smooth approximations, we omit the technical details here. For classical solutions it is easy to prove $L^q$ bounds. These bounds will depend on the $C^1$ norm of the driver and so will not pass to the limit. But using the fact, proved in Lemma~\ref{lemma:classical-to-rough}, that  classical kinetic solutions are, in particular, rough kinetic solutions, we can justify the steps below and get the uniform estimates claimed in Theorem \ref{th:apriori-est-existence}. 
\end{remark}

\begin{proof}[Proof of Theorem \ref{th:apriori-est-existence}]
Due to relation \eqref{eq:moments-u}, our global strategy will be to test $u_{t}$ against the functions $\beta_{q}$ defined in Notation \ref{not:apriori}. We will split this procedure in several steps.

\noindent
\textit{Step 1: Equation governing $\chi$.}
Let $\chi$ be the function  introduced in Notation \ref{not:apriori}, and observe that $\delta \chi (\varphi)=\delta f (\varphi)$.  Furthermore we have (in the distributional sense) $\nabla\1_{\xi<0}=0$ whenever $\xi\ne 0$,  and we have assumed $V(x,0)=0$ in~\eqref{H}. Having in mind relation \eqref{d:urd} defining $A^{1}$ and $A^{2}$, this easily yields:
$$
\chi (A^{1, \ast} \varphi + A^{2, \ast}
   \varphi) = f (A^{1, \ast} \varphi + A^{2, \ast}
   \varphi).
$$
Then the function $\chi$ solves the rough equation
\begin{equation}
\label{eq:rough-chi}
 \delta \chi (\varphi) =\delta \partial_{\xi}  m (\varphi)+ \chi (A^{1, \ast} \varphi) + \chi (A^{2, \ast}
   \varphi)  + \chi^{\natural} (\varphi)
\end{equation}
where $\chi^{\natural} = f^\natural$.

\noindent
\textit{Step 2: Considerations on weights.}
Our aim is to apply equation \eqref{eq:rough-chi} to a test function of the 
from $\beta_{q-1}$ for some $q\geq 2$. The growth of the test function does not pose particular problems since we can use a scale of spaces  of test function with a polynomial weight $w_{q-1}=1+\gamma_{q-1}$. However, in order to obtain useful estimates, we cannot apply directly the Rough Gronwall strategy. Indeed, estimates for  $ \chi^{\natural} (\beta_{q-1})$ will in general depend on $m(w_{q-1})$ and on $|\chi| (w_{q-1})$, and we cannot easily control $m(w_{q-1})$. 

To avoid this problem we have to inspect more carefully the equation satisfied by $\chi^{\natural}(\beta_{q-1})$. 
Applying $\delta$ to \eqref{eq:rough-chi} with $\varphi(\xi)=\beta_{q-1}$
we obtain
\begin{equation}\label{eq:delta}
\delta\chi^\natural_{s u t}(\beta_{q-1})=(\delta\chi)_{su}(A^{2,*}_{ut}\beta_{q-1})+\chi^\sharp_{su}(A^{1,*}_{ut}\beta_{q-1})
\end{equation}
with the usual notation
$$\chi^\sharp_{su}=(\delta\chi)_{su}-A^1_{su}\chi_s.$$
Note that this point can be made rigorous as explained in Remark \ref{rem:rig}.
Moreover, using the \rmk{specific definition of $A^{1, \ast},A^{2, \ast}$, namely}
$$A^{1, \ast} \varphi= -
Z^1 V \cdot\nabla\varphi,\qquad A^{2, \ast} \varphi= Z^2 V \cdot\nabla (V \cdot\nabla\varphi),$$
we have that the
test functions on the right hand side of \eqref{eq:delta}, i.e. $A^{1,*}_{ut}\beta_{q-1}$ and $A^{2,*}_{ut}\beta_{q-1}$, as well as their derivatives are bounded by the weight $w_{q-2}$ and not just $w_{q-1}$ as we would naively expect. So we can use the scale of spaces with weight $w_{q-2}$ in order to estimate the remainder.

To this end, consider the family \rmk{$(E_n^{q})_{0\leq n\leq 3}$} of weighted spaces given by 
$$E_n^q:=\left\{\varphi:\R^{N+1}\to\R;\,\|\varphi\|_{E_n^q}:=\sum_{0\leq k\leq n}\left\|\frac{\nabla^k\varphi}{w_{q-2}}\right\|_{L^\infty_{x,\xi}}<\infty\right\}$$
\rmk{Since $w_{q-2}$ stands for a fixed weight (independent of $n$), it is easy to check that the basic convolution procedure (\ref{smoothing-convolution}) gives birth to a smoothing $(J^\eta)_{\eta\in(0,1)}$ with respect to this scale.}

\noindent
\textit{Step 3: Estimation of $\chi^{\natural}$ as a distribution.}
We are now in a position to see relation \eqref{eq:delta} as an equation of the form \eqref{eq:gen} on the scale \rmk{$(E_n^{q})_{0\leq n\leq 3}$}, and apply the general a priori estimate of Theorem \ref{theo:apriori} \rmk{(or more simply Corollary \ref{cor:apriori})} in this context. Indeed, if $\varphi\in E_1^q$ then it holds true that
\begin{equation*}
|\delta m_{st}(\partial_\xi\varphi)|\leq \delta m_{st}(w_{q-2})\|\varphi\|_{E_1^q}=\left(\delta m_{st}(\1)+\delta m_{st}(\gamma_{q-2})\right)\|\varphi\|_{E_1^q}.
\end{equation*}
Besides, if $\varphi\in E_{n+1}^q$, $n=0,1,2$, then
\begin{align*}
\|A^{k,*}_{st}\varphi\|_{E^q_n}&\lesssim_{\|V\|_{W^{n,\infty}}}\omega_Z(s,t)^\frac{k}{p}\|\varphi \|_{E_{n+k}^q}, \qquad k=1,2,
\end{align*}
which implies that $\bfA=(A^1,A^2)$ is a continuous unbounded $p$-rough driver on the scale \rmk{$(E_n^{q})_{0\leq n\leq 3}$}. \rmk{Hence, thanks to Corollary \ref{cor:apriori} (keep in mind that we implicitly consider smooth approximations of the noise $Z$ here), we get that for all $s<t$ sufficiently close to each other,}
\begin{equation}\label{apriori-bound1-lp} 
\begin{split}
\|\chi^\natural_{st}\|_{E^q_{-3}}&\lesssim \omega_\natural(s,t)^\frac{3}{p}\\
&\quad := \, \sup_{r\in[s,t]}|\chi_r |(\1)\,\omega_A(s,t)^{\frac{3}{p}}
+\left(\delta m_{st}(\1)+\delta m_{st}(\gamma_{q-2})\right) \omega_A(s,t)^{\frac{3-p}{p} }.
\end{split}
\end{equation}
In the latter relation, we still have to find an accurate bound for $|\chi |(\1)$ and $m(\gamma_{q-2})$.

\noindent
\textit{Step 4: Reduction to $L^{1}$ estimates.}
\rmk{Inserting the above smoothing $(J^\eta)_{\eta\in(0,1)}$} into \eqref{eq:delta}, we obtain 
\[ \delta \chi^{\natural} (\beta_{q-1}) = \delta \chi ((1 - J^{\eta}) A^{1, \ast} \beta_{q-1})
   + \delta \chi ((1 - J^{\eta}) A^{2, \ast} \beta_{q-1}) - \chi (A^{1, \ast} (1 -
   J^{\eta}) A^{1, \ast} \beta_{q-1}) \]
\[ + \chi (A^{2, \ast} J^{\eta} A^{1, \ast} \beta_{q-1}) + \chi (A^{1, \ast} J^{\eta}
   A^{2, \ast} \beta_{q-1}) + \chi (A^{2, \ast} J^{\eta} A^{2, \ast} \beta_{q-1}) \]
\[ + \chi^{\natural} (J^{\eta} A^{1, \ast} \beta_{q-1}) + \chi^{\natural} (J^{\eta}
   A^{2, \ast} \beta_{q-1}) \]
\[ -\delta m ( \partial_{\xi} J^{\eta} A^{1, \ast} \beta_{q-1}) -\delta m ( \partial_{\xi}J^{\eta}
   A^{2, \ast} \beta_{q-1}) \]  
As already explained above, the test functions on the right hand side always contain derivatives of $\beta_{q-1}$, so that the scale $(E^q_n)$ is sufficient to control the right hand side. Indeed, we may use~\eqref{apriori-bound1-lp} for the remainder as well as the elementary bound (observe that $|\chi|(\gamma_{q-1})=\chi(\beta_{q-1})$)
$$
|\chi(\varphi)|
\leq |\chi|(w_{q-2}) \|\varphi\|_{E^q_0}
\leq |\chi|(\1+ w_{q-1}) \|\varphi\|_{E^q_0}
= \left(2|\chi|(\1)+\chi(\beta_{q-1})\right)\|\varphi\|_{E^q_0}
$$
to deduce, along the same lines as in the proof of Theorem \ref{theo:apriori}, that
\begin{multline*}
|\delta\chi_{sut}^\natural(\beta_{q-1})|\lesssim  \Big(\sup_{r\in[s,t]}|\chi_r|(\1)+\sup_{r\in[s,t]}\chi_r(\beta_{q-1})\Big)\omega_Z(s,t)^{\frac{3}{p}}\\
+\omega_Z(I)^{\frac{1}{p}}\omega_\natural(s,t)^\frac{3}{p}
+\left(\delta m_{st}(\1)+\delta m_{st}(\gamma_{q-2})\right)\omega_A(s,t)^{\frac{3-p}{p}},
\end{multline*}
provided $s,u,t \in I$ with $\omega_Z(I)$ sufficiently small. We can now resort to the (original) sewing Lemma \ref{lemma-lambda}, which gives
\begin{align*}
|\chi_{st}^\natural(\beta_{q-1})|&
\lesssim  \Big(\sup_{r\in[s,t]}|\chi_r|(\1)+\sup_{r\in[s,t]}\chi_r(\beta_{q-1})\Big)\omega_Z(s,t)^{\frac{3}{p}}\\
&\quad+\left(\delta m_{st}(\1)+\delta m_{st}(\gamma_{q-2})\right)\omega_A(s,t)^{\frac{3-p}{p}}.\end{align*}
Finally, \eqref{eq:rough-chi} applied to $\beta_{q-1}$ reads as
$$
 \delta \chi (\beta_{q-1}) = \chi (A^{1, \ast} \beta_{q-1}) + \chi (A^{2, \ast}
   \beta_{q-1}) - (q-1) \delta m (\gamma_{q-2}) + \chi^{\natural} (\beta_{q-1})
$$
so that recalling relation \eqref{eq:moments-u} and applying the Rough Gronwall lemma yields, for any $q\geq 2$,
\begin{equation}\label{eq:apriori-Lq}
\sup_{t\in[0,T]} \chi_t (\beta_{q-1}) + (q-1) \delta m_{0 T} (\gamma_{q-2}) 
\lesssim  
\chi_0 (\beta_{q-1})  + \delta m_{0 T}(\1) + \sup_{t\in[0,T]}|\chi_t|(\1).
\end{equation}
In particular, for $q=2$ we obtain an estimate for $\delta m_{0 T}(\1)$ in terms of  $\sup_{t\in[0,T]}|\chi_t|(\1)$ and the initial condition only:
$$
 \sup_{t\in[0,T]} \chi_t (\beta_{1}) + \delta m_{0 T} (\1) \lesssim \chi_0 (\xi)+ \sup_{t\in[0,T]}|\chi_t|(\1). 
$$
Plugging this relation into \eqref{eq:apriori-Lq} and recalling relation \eqref{eq:moments-u}, we thus end up with:
\begin{equation}\label{eq:apriori-Lq-2}
\sup_{t\in[0,T]}\|u_{t}\|_{L^{q}}^{q} + (q-1) \delta m_{0 T} (\gamma_{q-2})
\lesssim
\chi_0 (\beta_{q-1}) + \sup_{t\in[0,T]}|\chi_t|(\1).
\end{equation}
This way, we have reduced the problem of obtaining a priori estimates in $L^{q}$ to estimates in $L^1$, and more specifically to an upper bound on $ \sup_{t\in[0,T]}|\chi_t|(\1)$.

\noindent
\textit{Step 5: $L^{1}$ estimates.}
The first obvious idea in order to estimate $|\chi|(\1)$ is to follow the computations of the previous step. However, this strategy requires to test the equation against the singular test function $(x,\xi) \mapsto \mathrm{sgn}(\xi)$. It might be possible to approximate this test function and then pass to the limit. In order to do so one would have to prove that the rough driver behaves well under this limit and that we have uniform estimates. 

Without embarking in this strategy, we shall first upper bound $u_{t}$ in $L^{1}$. Namely, observe that the $L^1$-contraction property established in Section \ref{sec:uniq} immediately implies the $L^1$-estimate we need.
Indeed we note that under hypothesis \eqref{H}, equation \eqref{eq1} with null initial condition possesses a kinetic solution which is constantly zero. Hence the $L^1$-contraction property applied to $u_1=u$ and $u_2\equiv 0$ yields \eqref{eq:L1}. Going back to relation \eqref{eq:moments-u}, this also implies:
\begin{equation*}
\sup_{t\in[0,T]}|\chi_t|(\1) \le \|u_{0}\|_{L^{1}},
\end{equation*}
which is the required bound for $|\chi_t|(\1)$ needed to close the $L^{q}$-estimate \eqref{eq:apriori-Lq-2}. Our claim \eqref{eq:Lq} thus follows.
\end{proof}

\section{Rough conservation laws IV: Existence}\label{sec:existence}

\rmk{Let us finally establish the existence part of Theorem \ref{thm:main}. To this end, we consider $(\mathbf{Z}^n)_{n\in\N},$ a family of canonical rough paths lifts associated with smooth paths $(z^n)$, which converge to $\mathbf Z$ in the uniform sense (over the time interval $[0,T]$), and such that
\begin{equation}\label{uniform-assumption-z-exi}
\sup_{n \geq 0} \Big\{ \big| Z^{1,n}_{st}\big|^p +\big| Z^{2,n}_{st}\big|^{\frac{p}{2}} \Big\} \lesssim \omega_{\mathbf{Z}}(s,t) \ ,
\end{equation}
for some proportional constant independent of $s,t\in [0,T]$, and where $\omega_Z$ is the regular control introduced in Hypothesis \ref{hyp:z}. Note that the existence of such an approximation $(\mathbf{Z}^n)_{n}$ is for instance guaranteed by the result of \cite[Proposition 8.12]{FV}. Then we define the approximate drivers $\bfA^n=(A^{n,1},A^{n,2})$ as follows}
\begin{align*}
\begin{aligned}
A^{n,1}_{st}\varphi:&\!=Z^{n,1,i}_{st} \, V^i\cdot\nabla_{\xi,x}\varphi\ ,\\
A^{n,2}_{st}\varphi:&\!=Z^{n,2,ij}_{st}\, V^j\cdot\nabla_{\xi,x}( V^i\cdot\nabla_{\xi,x}\varphi)\ .
\end{aligned}
\end{align*}
\rmk{It is readily checked that both $\bfA$ and $\bfA^n$ define continuous unbounded $p$-rough drivers (in the sense of Definition \ref{def:urd}) on the scale $(E_k)_{0\leq k\leq 3}$ given by $E_k=W^{k,1}(\R^{N+1})\cap W^{k,\infty}(\R^{N+1})$, and, according to (\ref{uniform-assumption-z-exi}), we can clearly pick the related controls $\omega_{A^n}$ of $\bfA^n$ in such a way that
\begin{equation}\label{uniform-assumption-a}
\sup_{n\geq 0}\, \omega_{A^n}(s,t)\lesssim \omega_Z(s,t) \ ,
\end{equation}
for some proportional constant independent of $s,t\in [0,T]$. We fix this scale $(E_k)_{0\leq k\leq 3}$, as well as these uniformly-bounded controls $\omega_{A^n}$, for the rest of proof.}

\

Using the standard  theory for conservation laws, one obtains existence of a unique kinetic solution $u^n$ to the approximate problem
\begin{equation*}
\dd u^n+\diver(A(x,u^n))\,\dd z^n=0,\qquad
u^n(0)=u_0.
\end{equation*}
moreover we denote by  $f^n=\ind_{u^n>\xi}$ the kinetic function associated to $u^n$ and by  $m^n$ the kinetic measure appearing in the kinetic formulation \eqref{eq:weak}.
We are now ready to prove the existence of a solution to equation \eqref{eq1}.

\begin{proof}[Proof of Theorem \ref{thm:main} (i)]

\noindent
\textit{Step 1: A priori bound for the regularized solutions.}
Due to Lemma~\ref{lemma:classical-to-rough}, the classical solutions $f^n$ corresponds to rough kinetic solutions $f^{n,\pm}$ satisfying
\begin{align}\label{eq:234}
\begin{aligned}
\delta f^{n,+}_{st}(\varphi)&=f^{n,+}_s(A^{n,1,*}_{st}\varphi)+f^{n,+}_s(A^{n,2,*}_{st}\varphi)+f^{n,+,\natural}_{st}(\varphi)-m^n(\ind_{(s,t]}\partial_\xi\varphi),\\
\delta f^{n,-}_{st}(\varphi)&=f^{n,-}_s(A^{n,1,*}_{st}\varphi)+f^{n,-}_s(A^{n,2,*}_{st}\varphi)+f^{n,-,\natural}_{st}(\varphi)-m^n(\ind_{[s,t)}\partial_\xi\varphi),
\end{aligned}
\end{align}
which holds true in the \rmk{above scale $(E_k)_{0\leq k\leq 3}$,} for some remainders $f^{n,\pm,\natural}$. 

Under our standing assumption on the initial condition, it follows from Theorem \ref{th:apriori-est-existence} that the approximate solutions $u^n$ are bounded uniformly in $L^\infty(0,T;L^1\cap L^2(\R^N))$ and the corresponding kinetic measures $m^n$ are uniformly bounded in total variation. 
Therefore the Young measures $\nu^n=\delta_{u^n=\xi}$ satisfy
\begin{equation}
\label{est:conv-unif-est}
\sup_{t\in[0,T]}\int_{\R^N}\int_\R |\xi|\,\nu^n_{t,x}(\dd\xi)+\sup_{t\in[0,T]}\int_{\R^N}\int_\R |\xi|^2\,\nu^n_{t,x}(\dd\xi)\lesssim \|u_0\|_{L^1}+\|u_0\|^2_{L^2}.
\end{equation}
Now we simply invoke \rmk{Corollary \ref{cor:apriori} and (\ref{uniform-assumption-a}) to obtain}, since $|f^{n,\pm}|\leq 1$,
\begin{align}\label{est:rem1}
\begin{aligned}
&\|f^{n,\pm,\natural}_{st}\|_{E_{-3}}\lesssim \omega_Z(s,t)^{\frac{3}{p}}
+m^n(\ind_{(s,t]}) \,\omega_Z(s,t)^{\frac{3-p}{p} },
\end{aligned}
\end{align}
provided $\omega_Z(s,t)\leq L$. Notice that this restriction on the distance of $s,t$ induces a covering $\{I_k; \, k\le M\}$ of the interval $[0,T]$, for a finite $M\in\N$. To be more specific, $I_k$ is just chosen so that:
$$ 
\sup_{s,t\in I_k}\omega_{Z}(s,t)\leq L\qquad \forall k.
$$
Thus relation \eqref{est:rem1} is satisfied on each interval $I_{k}$.

\noindent
\textit{Step 2: Limit in equation \eqref{eq:234}.} 
\rmk{By \eqref{est:conv-unif-est}} the assumptions of Lemma \ref{kinetcomp} are fulfilled and there exists a kinetic function $f$ on $[0,T]\times\R^N$ such that, along a subsequence,
\begin{equation}\label{eq:conv-fn}
f^n\overset{*}{\rightharpoonup} f \qquad\text{in}\qquad L^\infty([0,T]\times\R^{N+1}),
\end{equation}
and the associated Young measure $\nu$ satisfies
$$
\esssup_{t\in[0,T]}\int_{\R^N}\int_\R \big(|\xi|+|\xi|^2\big)\,\nu_{t,x}(\dd\xi)\lesssim \|u_0\|_{L^1}+\|u_0\|^2_{L^2}.
$$
Moreover by the Banach-Alaoglu theorem there exists a nonnegative bounded Borel measure $m$ on $[0,T]\times\R^{N+1}$ such that, along a subsequence,
\begin{equation} \label{eq:conv-mn}
m^n\overset{*}{\rightharpoonup} m\qquad\text{in}\qquad \mathcal{M}_b([0,T]\times\R^{N+1}).
\end{equation}
 Moreover using Lemma \ref{lemma:left-right} we have also the existence of the good representatives $f^\pm$ of $f$. In order to pass to the limit in the equation \eqref{eq:234} 
the main difficulty originates in the fact that the only available convergence of $f^{n,+}$ (as well as $f^{n,-}$ and $f^n$) is weak* in time. Consequently, we cannot pass to the limit pointwise for a fixed time $t$.
In order to overcome this issue, we observe that the first three terms on the right hand sides in \eqref{eq:234}, i.e. the approximation of the Riemann-Stieltjes integral, are continuous in $t$. The kinetic measure poses problems as it contains jumps, which are directly related to the possible noncontinuity of $f^{n}$. Therefore, let us define an auxiliary distribution $f^{n,\flat}$ by
$$
f^{n,\flat}_t(\varphi):=f^{n,+}_t(\varphi)+m^n(\ind_{[0,t]}\partial_\xi\varphi),
$$
and observe that due to \eqref{eq:234} it can also be written as
$$
 f^{n,\flat}_t(\varphi)=f^{n,-}_t(\varphi)+m^n(\ind_{[0,t)}\partial_\xi\varphi).
$$
Then we have
\begin{align}\label{eq:23b}
\delta f^{n,\flat}_{st}(\varphi)&=f^{n,\pm}_s(A^{n,1,*}_{st}\varphi)+f^{n,\pm}_s(A^{n,2,*}_{st}\varphi)+f^{n,\pm,\natural}_{st}(\varphi)
\end{align}
and due to \eqref{est:rem1}, satisfied on each $I_{k}$, this yields:
\begin{align}\label{equic}
|\delta f^{n,\flat}_{st}(\varphi)|\lesssim \left(\omega_Z(s,t)^{\frac{1}{p}}
+m(\1_{[0,T]}) \,\omega_Z(s,t)^{\frac{3-p}{p} }\right)\|\varphi\|_{E_3}\lesssim \omega_Z(s,t)^\frac{3-p}{p}\|\varphi\|_{E_3},
\end{align}
where the second inequality stems from \eqref{eq:conv-mn}.

Owing to the fact that $f^{n,\flat}$ is a path,
the local bound \eqref{equic} on each interval $I_{k}$ can be extended globally on $[0,T]$ by a simple  telescopic sum argument. In other words, $(f^{n,\flat}(\varphi))_{n\in\N}$ is equicontinuous and bounded in $ V^q_1([0,T];\R)$ for $q=\frac{p}{3-p}$. So as a corollary of the Arzel\`a-Ascoli theorem (cf. \cite[Proposition 5.28]{FV}), there exists a subsequence, possibly depending on $\varphi$, and an element $f^{\flat, \vp} \in  {V}^q_1([0,T];\R)$ such that
\begin{equation}\label{conv:flat}
f^{n,\flat}(\varphi)\to f^{\flat,\vp} \qquad\text{in}\qquad V^{q'}_1([0,T];\R)\qquad\forall q'>q.
\end{equation}
As the next step, we prove that the limit $f^{\flat,\vp}$ can be identified to be given by a true distribution $f^\flat_t$ as 
\begin{equation}\label{eq:flat}
f^{\flat,\vp}_t=f^{+}_t(\varphi)+m(\ind_{[0,t]}\partial_\xi\varphi)=f^{-}_t(\varphi)+m(\ind_{[0,t)}\partial_\xi\varphi) =: f^\flat_t(\vp).
\end{equation}
To this end, let us recall that given $r_1,r_2\geq1$ such that $\frac{1}{r_1}+\frac{1}{r_2}>1$, we can define the Young integral as a bilinear continuous mapping
$$ V^{r_1}_1([0,T];\R)\times  V^{r_2}_1([0,T];\R)\to  V^{r_2}_1([0,T];\R),\qquad (g,h)\mapsto\int_{0}^\cdot g \,\dd h,$$
see \cite[Theorem 6.8]{FV}.
Let $\psi\in C^\infty_c([0,T))$. Then it follows from the definition of $f^{n,\flat}$, the integration by parts formula for Young integrals and $f^{n,\flat}_{0}=f^{n,+}_{0}=f_0$ that
\begin{align*}
\int_0^T f^{n}_t(\varphi) \psi'_t\,\dd t- m^n(\psi \partial_\xi\varphi)+f_0(\varphi)\psi_0=-\int_0^T\psi_t\,\dd f^{n,\flat}_t(\vp).
\end{align*}
The convergences \eqref{eq:conv-fn} and \eqref{eq:conv-mn} allow now to pass to the limit on the left hand side, whereas by \eqref{conv:flat} we obtain the convergence of the Young integrals on the right hand side.

We obtain
\begin{align*}
\int_0^T f_t(\varphi) \psi'_t\,\dd t- m(\psi \partial_\xi\varphi)+f_0(\varphi)\psi_0=-\int_0^T\psi_t\,\dd f^{\flat,\vp}_t
\end{align*}
Now, in order to derive \eqref{eq:flat} we consider again the two sequences of test functions \eqref{eq:23a} and pass to the limit as $\varepsilon\to0$. Indeed, due to Lemma \ref{lemma:left-right} we get the convergence of the first term on the left hand side, the kinetic measure term converges due to dominated convergence theorem and for the right hand side we use the continuity of the Young integral. We deduce
\begin{align*}
-f^{+}_t(\varphi)-m(\ind_{[0,t]}\partial_\xi\varphi)+f_0(\varphi)&=-f^{\flat,\vp}_t+f^{\flat,\vp}_0,\\
-f^{-}_t(\varphi)-m(\ind_{[0,t)}\partial_\xi\varphi)+f_0(\varphi)&=-f^{\flat,\vp}_t+f^{\flat,\vp}_0,
\end{align*}
and \eqref{eq:flat} follows since for $t=0$ we have
$$f^{\flat,\vp}_0=\lim_{n\to\infty} f^{n,\flat}_0(\varphi)=\lim_{n\to\infty}  f^{n,\pm}_0(\varphi)=f_0(\varphi).$$
Now, it only remains to prove that $f^+_0=f_0$. The above formula at time $t=0$ rewrites as
$$
f^{+}_0(\varphi)-f_0(\varphi)=-m(\ind_{\{0\}}\partial_\xi\varphi).
$$
Hence the claim can be proved following the lines of \cite[Lemma 4.3]{H16} and we omit the details. For the sake of completeness, let us set $f^-_0:=f_0$ and $f^+_T:=f^-_T$.

Finally we have all in hand to complete the proof of convergence in \eqref{eq:234}. Fix $\varphi \in E_3$ and integrate~\eqref{eq:23b} over $s$ as follows
\begin{equation*}
\begin{split}
\frac{1}{\varepsilon} \int_s^{s+\varepsilon}  (\delta f^{n,\flat})_{rt}(\varphi)\,\dd r-\frac{1}{\varepsilon} \int_s^{s+\varepsilon}  f^{n,+}_r(A^{n,1,*}_{rt} \varphi +A^{n,2,*}_{rt} \varphi)\,\dd r&=\frac{1}{\varepsilon} \int_s^{s+\varepsilon}  f^{n,+,\natural}_{rt}(\varphi) \,\dd r ,\\
\frac{1}{\varepsilon} \int^s_{s-\varepsilon}  (\delta f^{n,\flat})_{rt}(\varphi)\,\dd r-\frac{1}{\varepsilon} \int^s_{s-\varepsilon}  f^{n,-}_r(A^{n,1,*}_{rt} \varphi +A^{n,2,*}_{rt} \varphi)\,\dd r&=\frac{1}{\varepsilon} \int^s_{s-\varepsilon}  f^{n,-,\natural}_{rt}(\varphi) \,\dd r .
\end{split}
\end{equation*}
On the left hand side we can successively take the limit as $n \to \infty$ and $\varepsilon \to 0$ (or rather for a suitable subsequence of $n$ and $\varepsilon$ depending possibly on $\varphi$ and $s$, to be more precise). This leads to the following assertion: for every $s<t \in [0,T]$, the quantities
\begin{equation}\label{defi-f-flat}
\begin{split}
f^{+,\natural}_{s t}(\varphi):&\!=\lim_{\varepsilon \to 0}\, \lim_{n \to \infty} \frac{1}{\varepsilon} \int_s^{s+\varepsilon}  f^{n,+,\natural}_{rt}(\varphi)\,\dd r\\
f^{-,\natural}_{s t}(\varphi):&\!=\lim_{\varepsilon \to 0}\, \lim_{n \to \infty} \frac{1}{\varepsilon} \int^s_{s-\varepsilon}  f^{n,-,\natural}_{rt}(\varphi)\,\dd r
\end{split}
\end{equation}
are well-defined, finite and satisfy
\begin{equation}\label{second-part}
(\delta f^{\flat})_{st}(\varphi)=f^\pm_s(A^{1,*}_{st} \varphi +A^{2,*}_{st} \varphi)+f^{\pm,\natural}_{st}(\varphi) . 
\end{equation}
Injecting (\ref{eq:flat}) into (\ref{second-part}) yields that for every $s<t\in [0,T]$,
\begin{equation*}
\begin{split}
\delta f^+_{st}(\varphi)&=f^+_s(A^{1,*}_{st} \varphi +A^{2,*}_{st} \varphi)-m(\1_{(s,t]}  \partial_\xi \varphi)+f^{+,\natural}_{st}(\varphi) ,\\
\delta f^-_{st}(\varphi)&=f^-_s(A^{1,*}_{st} \varphi +A^{2,*}_{st} \varphi)-m(\1_{[s,t)}  \partial_\xi \varphi)+f^{-,\natural}_{st}(\varphi) ,
\end{split}
\end{equation*}
and so it only remains to prove that the remainders $f^{\pm,\natural}$ defined by (\ref{defi-f-flat}) are sufficiently regular.
To this end, we first observe that
\begin{align}\label{control-mu-n}
\limsup_{n\to\infty}m^n(\1_{(s,t]})\leq m(\ind_{[s,t]}),\qquad \limsup_{n\to\infty}m^n(\1_{[s,t)})\leq m(\ind_{[s,t]})
\end{align}
holds true for every $s<t\in[0,T]$.
Indeed, the weak* convergence of $m^n$ to $m$, as described by \eqref{eq:conv-mn}, allows us to assert that for every $t$ in the (dense) subset $\mathfrak{C}_m$ of continuity points of the function $t\mapsto m(\1_{(0,t]})$, one has $m^n(\1_{(0,t]}) \to m(\1_{(0,t]})$ as $n\to \infty$. Consider now two sequences $s_k,t_k$ in $\mathfrak{C}_m$ such that $s_k$ strictly increases to $s$ and $t_k$ decreases to $t$, as $k\to \infty$.
Hence
it holds that
$$ \limsup_{n\to \infty} \, m^n(\ind_{(s,t]}) \leq \limsup_{n\to \infty} \, m^n(\1_{(s_k,t_k]})=m(\1_{(s_k,t_k]}) .$$
Taking the limsup over $k$ yields the first part of (\ref{control-mu-n}), the second part being similar. 
Next, we make use of (\ref{est:rem1}) and (\ref{control-mu-n}) to deduce for every $\varphi \in E_3$ and every $s<t \in I_k$,
$$|f^{\pm,\natural}_{st}(\varphi)| \lesssim  \lVert \varphi \rVert_{E_3} \left(\omega_Z(s,t)+m([s,t])^{\frac{p}{3}} \omega_Z(s,t)^{1-\frac{p}{3}} \right)^{\frac{3}{p}} .$$
We can conclude that $f^{\pm,\natural} \in {V}^{\frac{p}{3}}_{2,\text{loc}}([0,T];E_3^\ast)$, and finally the pair $(f,m)$ is indeed a generalized kinetic solution on the interval $[0,T]$.

\noindent
\textit{Step 3: Conclusion.}
The reduction Theorem \ref{thm:main}(ii) now applies and yields the existence of $u^+:[0,T)\times\R^N\to\R$ such that $\1_{u^+(t,x)>\xi}=f^+_{t}(x,\xi)$ for a.e. $(x,\xi)$ and every $t$. Besides, we deduce from \eqref{est:23e} that $u^+\in L^\infty(0,T;L^1\cap L^2(\R^N))$. Hence, the function $u^+$ is a representative of a class of equivalence $u$ which is a kinetic solution to \eqref{eq1}. In view of Remark \ref{u+}, this is the representative which then satisfies the $L^1$-contraction property for every $t\in[0,T]$ and not only almost everywhere. The proof of Theorem \ref{thm:main}(i) is now complete.
\end{proof}

\section*{Conflict of Interest}
 The authors declare that they have no conflict of interest.

\def\ocirc#1{\ifmmode\setbox0=\hbox{$#1$}\dimen0=\ht0 \advance\dimen0
  by1pt\rlap{\hbox to\wd0{\hss\raise\dimen0
  \hbox{\hskip.2em$\scriptscriptstyle\circ$}\hss}}#1\else {\accent"17 #1}\fi}

\end{document}